\numberwithin{equation}{section}
\numberwithin{figure}{section}
\theoremstyle{plain}
\newtheorem{thm}{\protect\theoremname}[section]
\DeclareMathOperator*{\Ex}{\mathbb{E}}
\DeclareMathOperator*{\Pro}{\mathbb{P}}
\newtheorem{cor}[thm]{Corollary}%{\protect\Corollaryname}
\newtheorem{obs}[thm]{Observation}
\newtheorem*{thm*}{\protect\theoremname}
\theoremstyle{definition}
\newtheorem{problem}[thm]{\protect\problemname}
\newtheorem*{problem*}{Problem}
\theoremstyle{remark}
\newtheorem*{rem*}{\protect\remarkname}
\theoremstyle{remark}
\newtheorem{rem}[thm]{\protect\remarkname}
\theoremstyle{definition}
\newtheorem{defn}[thm]{\protect\definitionname}
\theoremstyle{plain}
\newtheorem{prop}[thm]{\protect\propositionname}
\theoremstyle{plain}
\newtheorem{fact}[thm]{\protect\factname}
\theoremstyle{definition}
\theoremstyle{plain}
\newtheorem{lem}[thm]{\protect\lemmaname}
\theoremstyle{plain}
\newtheorem{claim}[thm]{Claim}%{\protect\claimname}
\theoremstyle{plain}
\let\originalleft\left
\let\originalright\right
\renewcommand{\left}{\mathopen{}\mathclose\bgroup\originalleft}
\renewcommand{\right}{\aftergroup\egroup\originalright}
   \providecommand{\fg}{\ifdim\lastskip>\z@\unskip\fi~\frqq}%
\providecommand{\definitionname}{Definition}
\providecommand{\factname}{Fact}
\providecommand{\lemmaname}{Lemma}
\providecommand{\problemname}{Problem}
\providecommand{\propositionname}{Proposition}
\providecommand{\remarkname}{Remark}
\providecommand{\theoremname}{Theorem}
\providecommand{\examplename}{Example}
\newcommand{\eps}{\varepsilon}
\newcommand{\Id}{{\rm Id}}
\newcommand{\enc}{\texttt{enc}}
\newcommand{\cF}{\mathcal{F}}
\newcommand{\cE}{\mathcal{E}}
\newcommand{\FF}{\mathbb{F}}
\newcommand{\cA}{\mathcal{A}}
\newcommand{\cT}{\mathcal{T}}
\newcommand{\cC}{\mathcal{C}}
\newcommand{\Img}{\textrm{Im}}
\newcommand{\Ker}{\textrm{Ker}}
\title[Non-commutative error correcting codes]{Non-commutative error correcting codes\\ and proper subgroup testing}
\author[M.\ Chapman]{Michael Chapman}
\address{Michael Chapman\hfill\break
	Courant Institute of Mathematical Sciences\hfill\break
	New York University,\hfill\break 251 Mercer St, New York, NY 10012, USA.}
\email{mc9578@nyu.edu}
\author[I.\ Dinur]{Irit Dinur}
\address{Irit Dinur\hfill\break
	Weizmann institute of Science\hfill\break
	Rehovot, Israel.}
\email{irit.dinur@weizmann.ac.il}
\author[A.\ Lubotzky]{Alexander Lubotzky}
\address{Alexander Lubotzky\hfill\break
	Weizmann institute of Science\hfill\break
	Rehovot, Israel.}
\email{alex.lubotzky@mail.huji.ac.il}
\begin{document}

\begin{abstract}
  Property testing has been a  major area of research in computer science in the last three decades. By property testing we refer to an ensemble of problems, results and algorithms which enable to deduce global information about some data by only reading small random parts of it. In recent years, this theory found its way into group theory (see \cites{becker2022testability,CL_part1,BCLV_subgroup_tests} and the references therein), mainly via \emph{group stability}. 

  In this paper, we study the following problem: Devise a randomized algorithm that given a subgroup $H$ of $G$, decides whether $H$ is the whole group or a proper subgroup, by checking whether a single (random) element of $G$ is in  $H$. The search for such an  algorithm boils down to the following purely group theoretic problem: For $G$ of rank $k$, find a small as possible \emph{test subset} $A\subseteq G$ such that for every proper subgroup $H$, $|H\cap A|\leq (1-\delta)|A|$ for some absolute constant $\delta>0$, which we call the \emph{detection probability} of $A$.  It turns out that the search for sets $A$ of size linear in $k$ and constant detection probability is a non-commutative analogue of the classical search for families of good error correcting codes. This paper is devoted to proving that such test subsets \textbf{exist}, which implies good \emph{universal  error correcting codes}   exist --- providing a far reaching generalization of the classical result of Shannon \cite{shannon1948mathematical}.
In addition, we study this problem in certain subclasses of groups --- such as abelian, nilpotent, and finite solvable groups --- providing different constructions of test subsets for these subclasses with various qualities. Finally, this generalized theory of  non-commutative error correcting codes suggests a plethora of interesting  problems and research directions.
  
\end{abstract}

\maketitle
\tableofcontents
\section{Introduction}\label{sec:intro}

Let $G$ be a group of rank $k$, and $A$ a finite (multi-)subset\footnote{By a multi-subset, we mean a subset which is a multi-set, namely a finite unordered collection of elements of $G$ which may include repetitions.} of $G$. The \emph{detection probability} of $A$ is 
  \begin{equation}\label{eq:defn_detection_prob}
       \delta(A;G)=\inf\left\{1-\frac{|A\cap H|}{|A|}\ \middle|\ H\lneq G\right\}.\footnote{Note that the same quantity is calculated by running only over $H$ which are maximal subgroups of $G$.}
  \end{equation}
The theme of this paper is finding \textbf{small} subsets with \textbf{large} detection probability for various classes of groups. 
The main result of this paper is:
\begin{thm}[Main Theorem]\label{conj:main}
    There exist absolute constants $\delta_0>0$ and $C_0> 1$, such that every group $G$ of rank $k$ has a (multi-)subset $A$ with 
    \[
|A|\leq C_0\cdot k\quad \textrm{and}\quad \delta(A;G)\geq \delta_0.
    \]
\end{thm}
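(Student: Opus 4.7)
My plan is to use the probabilistic method. Fix constants $C_0$ and $\delta_0 \in (0, 1/4)$ to be chosen later, and take $A$ to be a multi-set of $N = C_0 k$ elements drawn from a suitable distribution on $G$---the uniform measure when $G$ is finite, or random reduced words of appropriate length in the generators when $G$ is infinite. I will aim to show that with positive probability $\delta(A;G) \geq \delta_0$; by the footnote to the definition, it suffices to defeat every maximal subgroup.

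The first step is a concentration estimate. For a fixed maximal subgroup $M \leq G$ of index $n = [G:M]$, a uniformly random element of $G$ lies in $M$ with probability $1/n$, so Chernoff's inequality yields
\[
\Pr\bigl[\,|A \cap M|/|A| > 1 - \delta_0\,\bigr] \leq \exp\bigl(-c_{\delta_0} \cdot N \cdot \max(1,\log n)\bigr)
\]
for a positive constant $c_{\delta_0}$ depending only on $\delta_0$. The number of subgroups of index $n$ in a $k$-generated group is at most $n \cdot (n!)^{k-1} = \exp(O(kn \log n))$, obtained by counting transitive homomorphisms $G \to S_n$ with basepoint. Hence a direct union bound over all maximal subgroups of index $\leq n_0$ succeeds as soon as $C_0$ is chosen large enough in terms of $n_0$. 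This already handles the maximal subgroups of bounded index.

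The main obstacle is treating maximal subgroups of unbounded index. When $n \gg C_0$, the count $\exp(\Theta(kn\log n))$ of index-$n$ subgroups dominates the Chernoff tail $\exp(-\Omega(N\log n)) = \exp(-\Omega(k\log n))$, so the naive union bound fails by an additional factor of $n$ in the exponent. To overcome this I would exploit the fact that maximal subgroups of large index correspond to primitive actions of $G$ on large sets, which are heavily constrained. In the finite case, the O'Nan--Scott theorem classifies such actions into a handful of structural types (almost simple, affine, diagonal, product action, twisted) in which the relevant subgroups vary in a parametric way with far fewer degrees of freedom than the crude $(n!)^k$ count suggests; for infinite finitely generated $G$ an analogous constraint comes from residual and tree-theoretic structure. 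Alternatively, one can augment the random $A$ by $O(k)$ deterministically chosen words in the generators tailored to defeat all large-index maximal subgroups by construction, reducing the probabilistic problem to the bounded-index regime that was already handled.

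The step I expect to be the genuine obstacle is precisely the treatment of large-index maximal subgroups: the bounded-index analysis is a routine concentration-plus-union-bound argument, but taming the super-exponential count of large-index maximal subgroups with only $O(k)$ samples requires substantive group-theoretic input---either a refined enumeration via classification-type theorems or an explicit construction of test elements. This is the non-commutative counterpart of the combinatorial input (e.g.\ Gilbert--Varshamov, random linear codes) that is needed in the classical abelian setting to obtain good codes of linear rate.
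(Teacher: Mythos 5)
Your diagnosis of where the naive union bound fails is correct, but both escape routes you propose are dead ends, and the paper's actual proof rests on a clustering idea that is absent from your outline.

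The first escape route (O'Nan--Scott / classification of primitive actions) cannot work because it does not apply to infinite groups, and the theorem must be proved for all groups of rank $k$ --- indeed, by Fact \ref{fact:basic_properties}, it suffices to prove it for the free group $\cF_k$, which is where all the work happens; but $\cF_k$ has maximal subgroups of every finite index, and no finite classification constrains them. Even restricting to finite $G$, within each O'Nan--Scott type the number of index-$n$ maximal subgroups can still grow super-exponentially in $n$, so the union bound is not repaired. More basically, your proposal does not specify a distribution that makes sense for infinite $G$: ``random reduced words of appropriate length'' of fixed length $\ell$ cannot defeat a maximal subgroup whose syndrome consists of generators that appear in almost no short words, and the paper explicitly flags (end of Section \ref{sec:intro}, part III) that the absence of a natural random model for infinite $G$ is a genuine difficulty. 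The second escape route (``augment by $O(k)$ deterministic words tailored to defeat large-index maximal subgroups'') is circular: constructing deterministic words that detect all maximal subgroups simultaneously is precisely the content of the Main Theorem.

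What the paper does instead is change the parameter by which subgroups are clustered. Rather than partitioning maximal subgroups by index $n$ (unbounded, with super-exponential multiplicity), it partitions all proper subgroups $H$ of $\cF_k$ by their \emph{syndrome} $Synd(H;B) = \{x_i \in B : x_i \notin H\}$ in a fixed basis $B$ of size $k$ (see \eqref{eq:defn_syndrome} and \eqref{eq:defn_Sigma_C}). There are only $2^k - 1$ nonempty syndromes, so the union bound is back in a fixed finite range, independent of index. The detection mechanism that makes this partition useful is Observation \ref{obs:one_trick}: a product of generators in which \emph{exactly one} factor lies outside $H$ must itself lie outside $H$. Combining this with words $x_S = \prod_{x_i \in S} x_i$ for random subsets $S$ of carefully chosen density $\approx 1/|Synd(H;B)|$ gives a constant-probability one-shot detector for each syndrome class (Theorem \ref{thm:first_iteration}); this yields $O(k\log k)$ words with detection probability $\Omega(1/\log k)$, which is then amplified to constant detection via code composition (Proposition \ref{prop:amplification_to_const}). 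Getting the size down to $O(k)$ requires a further, genuinely different tool --- Spielman-style iterative encoding using unique-neighbor expanders (Lemma \ref{lem:iterations_of_Spielman} and Corollary \ref{cor:blabla}) --- that has no counterpart in your sketch. So the gap is not a technical inefficiency in the union bound but a missing structural idea: you must detect subgroups by their syndrome, not their index, and you must use the one-letter-out observation to build words whose membership in $H$ is controlled by the syndrome alone.
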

The search for small subsets with large detection probability, and in particular  Theorem \ref{conj:main}, is interesting from three different perspectives:
Robust generation of groups;
Property testing in group theory;
 Non-commutative error correcting codes.

  \subsection*{I. Robust generation}
  Note  that if the subgroup generated by $A$ is proper, then the detection probability $\delta(A;G)=0$. Hence, for $\delta(A;G)$ to be positive, $A$ needs to be a generating set. Actually, this is a special kind of generating set --- \textbf{every} $1-\delta(A;G)$ portion of the elements of $A$ generate $G$. Hence, we seek generating sets not much larger than a minimal one, such that every large enough subset of them is still generating.
\subsection*{II. Proper Subgroup Testing}
A property tester is a randomized algorithm that aims to distinguish between objects that satisfy a certain property, and objects that are \emph{far}  from elements satisfying the property, by querying the object at a few random positions.\footnote{For a thorough introduction to property testing, see \cite{Goldreich}. For a somewhat general property testing framework, specifically aimed at mathematicians, see Section 2 of \cite{CL_part1}.}
In our context, the tester $\cT$ gets as input a group $G$\footnote{Encoded, for example, as a finitely presented group with generators and relations --- i.e., a pair consisting of a positive integer  $k$ (in binary) which encodes the number of generators, and a finite list of words in the free group $\cF_k$ with basis $B=\{x_1,...,x_k\}$ that represents the relations. } 
of rank $k$ and access to a black box function  $h\colon G\to \{0,1\}$. The function $h$ is guaranteed to be the indicator of some subgroup $H\leq G$. The goal of the tester is to distinguish between the case when $H$ is the whole group $G$, and when $H$ is a proper subgroup of it.\footnote{This problem may seem a bit out of context. It arises naturally in the computational model of \emph{Subgroup Tests} suggested in \cite{BCLV_subgroup_tests}, specifically in the design of PCPs within the model.}

Sets $A$ as in Theorem \ref{conj:main} suggest the following $1$-query\footnote{This is the term for how many evaluation points of the black box function $h$ does the tester uses when it runs. The general case of $q$-queries --- for constant $q$ or some $q$ growing with $k$ --- is also interesting.} tester for this problem:\footnote{Since the tester  $\cT$ is associated with $A$, we often refer to such sets $A$ as \emph{test subsets}.} $\cT$ chooses a word $w\in A$ uniformly at random and checks the value of $h$ at $w$; if $h(w)=0$, namely $w\notin H$, it concludes that ``\emph{$H$ is a proper subgroup}''; otherwise $h(w)=1$, namely $w\in H$, in which case $\cT$ did not find a sign that $H$ is proper and concludes ``\emph{$H$ is the whole group}''.  
It is straightforward to see that 
\[
\Pro\left[\ \cT\ \textrm{concludes\ }H=G\ \right]=\frac{|A\cap H|}{|A|}.
\]
Thus, when $H=G$, the tester never errs. On the other hand, when $H\lneq G$, the tester \emph{detects} it with probability of at least $\delta(A;G)$, which is why this quantity is called the \emph{detection probability}.\footnote{The parameter $1-\delta(A;G)$ is often called the \emph{soundness} parameter, when viewing this as a decision problem.} The  size of $A$  is comparable to the amount of random bits  $\cT$  uses,   which is considered a scarce resource, further motivating making $A$ small without hindering the detection probability.

As this is a computational problem, we care about various other aspects of the tester and the test subset: What is the running time of $\cT$?  Is the test subset $A$ constructed explicitly or using probabilistic methods? How long are the elements of $A$ as words in the generators of $G$?  etc.
This motivates the following notion which will be used throughout the paper:
\begin{defn}
    Let $G$ be a group and $S$ a generating set of size $k$. The length $L(A)=L_{G,S}(A)$ of a finite subset $A\subseteq G$ with respect to $S$ is the smallest $\ell$ such that every element in $A$ can be written as a word of length at most $\ell$ in $S\cup S^{-1}$.
\end{defn}
\subsection*{III. Non-commutative error correcting codes}
Let us observe what our Main Theorem says about the group $\FF_p^k$, the $k$-dimensional vector space over the field of order $p$, where $p$ is a prime integer. Let $A=\{\alpha_1,...,\alpha_n\}$ be the subset of $\FF_p^k$ guaranteed by Theorem \ref{conj:main}, namely $n\leq C_0\cdot k$ and $\delta(A;\FF_p^k)\geq \delta_0$.
Define a linear map $\enc\colon \FF_p^k\to \FF_p^n$ by letting 
\[
 v\in \FF_p^k \colon \ \ 
\enc(v)=(\langle v,\alpha_i\rangle)_{i=1}^n
\]
  where $\langle v,\alpha\rangle=\sum_{i=1}^k v_i\alpha_i $ is a fixed bilinear form, and let $\cC=\Img(\enc)\leq \FF_p^n$ be the image subspace. As $\delta(A;\FF_p^k)\geq \delta_0>0$, $A$ is generating $\FF_p^k$. Hence, $\enc$ is injective and $\dim(\cC)=k$. Moreover, as
$
v^\perp=\{\alpha\in \FF_p^k\mid \langle v,\alpha\rangle=0\}
$ is a co-dimension $1$ subspace of $\FF_p^k$ 
  for every $\vec 0\neq v\in \FF_p^k$, and $
    A\cap v^\perp=\{\alpha_i \in A\mid \langle v,\alpha_i\rangle=0\}
$, we can deduce that 
\[
\begin{split}
    \delta_0\leq 1-\frac{|A\cap v^\perp|}{|A|}=\frac{|\{\alpha_i\in A\mid \langle v,\alpha_i\rangle \neq0\}|}{n}=\frac{w_H(\enc(v))}{n},
\end{split}
\]
where $w_H(u)=|\{i\in [n]\mid u_i\neq 0\}|$ is the \emph{Hamming weight} of the vector $u\in \FF_p^n$. All in all, $\cC$ is a $k$-dimensional subspace of $\FF_p^n$ such that every non-zero vector $u\in \cC$ satisfies $w_H(u)\geq \delta_0\cdot n$. 

Subspaces of $\FF_p^n$ of dimension $k$ with minimal non-zero Hamming weight $d$ are called \emph{linear} $[n,k,d]_p$-codes. The quantity $\nicefrac{k}{n}$ is referred to as the \emph{rate} of the code, while $\nicefrac{d}{n}$ is its \emph{normalized distance}. In his seminal paper \cite{shannon1948mathematical}, Shannon defined a \emph{good code} to be an infinite family of $[n,k,d]_p$-codes (where $n\to\infty$ and) with constant rate and normalized distance, and showed that a \emph{random} code is good,\footnote{More on that in the beginning of Section \ref{sec:randomized_construction}.} laying the foundations to the theory of error correcting codes. As the analysis shows, the codes $\cC$ constructed using  Theorem \ref{conj:main} are good codes with rate of at least $\nicefrac{1}{C_0}$ and normalized distance of at least $\delta_0$.

Our Main Theorem can thus be seen as a vast generalization of Shannon's Theorem, from the family $\frak{G}=\{\FF_p^k\}_{k\in \mathbb{N}}$ to the family $\frak{U}$ of \textbf{all groups}, which includes (in particular) all infinite groups --- this point is of special interest, as there is no natural random choice of a finite test subset $A$ for an infinite group $G$. Borrowing the notations from classical codes, we define:
\begin{defn}[$\frak{G}$-codes]\label{def:G_codes}
    Let $\frak{G}$ be a class of groups. An $[n,k,d]_\frak{G}$-code is an assignment that associates with every $G\in \frak{G}$ of rank $k$ a multi-subset $A$ of $G$ of size $n$ such that $\delta(A;G)\geq \nicefrac{d}{n}$. If $\frak{U}$ is the class of all groups, we call an $[n,k,d]_{\frak{U}}$-code a \emph{universal} code. We often use the term $\frak{G}$-code for an $[n,k,d]_{\frak{G}}$-code without specifying the parameters $n,k,d$. Such a code is called \emph{good} if the quantities $\nicefrac{k}{n}$ and $\nicefrac{d}{n}$ are bounded from below by some positive constants independent of $k$.
\end{defn}

\begin{cor}
In this language, Theorem \ref{conj:main} shows the existence of \textbf{good universal codes}.
\end{cor}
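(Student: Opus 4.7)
The plan is to observe that the corollary is essentially a dictionary translation between Theorem \ref{conj:main} and Definition \ref{def:G_codes}: the data the theorem produces is already what the definition demands, modulo normalizing the size of the multi-subset.

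Given Theorem \ref{conj:main}, for every $G\in \frak U$ of rank $k$ let $A_G$ be the multi-subset it provides, with $|A_G|\le C_0 k$ and $\delta(A_G;G)\ge \delta_0$. To fit the shape of Definition \ref{def:G_codes}, which asks for a single size $n$ per rank $k$ across the whole class, I would normalize by duplication: set $n:=\lceil C_0 k\rceil$, write $n=q|A_G|+r$ with $0\le r<|A_G|$, take $q$ copies of every element of $A_G$ and then $r$ additional copies of arbitrary elements, forming a multi-subset $\widetilde A_G$ of size exactly $n$. Uniform duplication preserves the ratio $|A\cap H|/|A|$ exactly for every subgroup $H$, while the $r$ extra copies decrease $\delta$ by at most an additive $\delta_0\, r/n$. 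A short case split on whether $|A_G|\le n/2$ or $|A_G|>n/2$ (in the first case $q\ge 2$ and $r<|A_G|\le n/2$; in the second $q=1$ and $r=n-|A_G|<n/2$) shows $r/n<1/2$ always, hence $\delta(\widetilde A_G;G)\ge \delta_0/2$.

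This yields, for each $k$, an $[n,k,d]_{\frak U}$-code with rate $k/n\ge 1/(C_0+1)$ and normalized distance $d/n\ge \delta_0/2$, both bounded below by absolute positive constants independent of $k$. That is exactly Definition \ref{def:G_codes}'s notion of a good universal code. No real obstacle is expected: all the substance sits in Theorem \ref{conj:main}, and the remaining reduction is routine bookkeeping.
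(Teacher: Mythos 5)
Your proof is correct, and you've identified a small but genuine gap between the theorem's conclusion and the letter of Definition \ref{def:G_codes}: the theorem guarantees a multi-subset of size \emph{at most} $C_0k$, whereas the definition wants a single size $n$ assigned to each rank $k$. Your padding-by-duplication argument closes this gap cleanly, and the arithmetic checks out: if $n=qm+r$ with $m=|A_G|$ and $0\le r<m$, then for any $H\lneq G$ one has
\[
1-\frac{|\widetilde A_G\cap H|}{n}\ \geq\ \frac{q\,\delta_0\,m}{qm+r}\ =\ \delta_0\left(1-\frac{r}{n}\right)\ >\ \frac{\delta_0}{2},
\]
using $r/n<\nicefrac{1}{2}$ exactly as in your case split, and the rate bound $k/n\ge 1/(C_0+1)$ is immediate. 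The paper itself offers no proof of this corollary, treating it as a direct reading of the definition; this is partly justified because the constructions behind Theorem \ref{conj:main} are uniform in $k$ (Remark \ref{rem:intro_uniform_generation_of_universal_codes}), so the multi-subset produced lives in $\cF_k$ and has the \emph{same} size $n(k)$ for every rank-$k$ group, making the normalization step unnecessary in practice. Your version is the more self-contained one, relying only on the theorem statement rather than on how it is proved, at the cheap cost of a factor $2$ in $\delta_0$.
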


This perspective on the search for small test subsets with large detection probability suggests studying the optimal tradeoff between these quantities. Recall that there are various upper  and lower bounds (cf. \cite{Venkat_bounds_ECC})  on the best possible \emph{rate} $\rho=\nicefrac{k}{n}$ given a fixed \emph{normalized distance} $\delta=\nicefrac{d}{n}$ for error correcting codes over $\FF_p$. E.g., using an elementary sphere packing argument, every $[n,k,d]_p$-code satisfies $\nicefrac{k}{n}+H_p(\nicefrac{d}{2n})\leq 1+o(1)$, where $H_p$ is the $p$-ary entropy function. Gilbert \cite{gilbert1952comparison} and Varshamov \cite{varshamov1957estimate} provided a lower bound on this quantity by showing that a randomly chosen linear code achieves $\nicefrac{k}{n}+H_p(\nicefrac{d}{n})\geq 1-o(1)$.  This bound is called the \emph{GV-bound}. The constants $C_0$ and $\delta_0$ we provide in Theorem \ref{conj:main} are far from achieving the GV-bound. This leads to two interesting problems: Are there stricter upper bounds for universal codes that are absent in the finite field setup? Are there \emph{universal codes} that achieve the GV-bound?

\begin{rem}
    Due to the connection to classical error correcting codes described above, our constructions of universal codes (and $\frak{G}$-codes for other classes $\frak{G}$) are all inspired by various constructions and techniques from the classical theory of error correction.
\end{rem}

\subsection*{Our results}

A priori, it is not  clear that there exists a constant $\delta_0>0$ such that every finitely generated group $G$ has a subset $A$ with $\delta(A;G)\geq \delta_0$, without any assumptions on the size of $A$ except it being finite. Resolving this is our first result.
\begin{thm}[Universal Hadamard code]\label{thm:intro_exp_sized_set}
    Let $G$ be a group of rank $k$. Then, there exists an explicit set $A\subseteq G$ such that $|A|=2^k$,$L(A)=k$ and $\delta(A;G)\geq \nicefrac{1}{2}$. 
\end{thm}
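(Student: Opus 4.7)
Fix a minimal generating set $S=\{g_{1},\ldots,g_{k}\}$ of $G$ and take
\[
A \;=\; \bigl\{\, g_{1}^{\eps_{1}} g_{2}^{\eps_{2}} \cdots g_{k}^{\eps_{k}} \;:\; \eps\in\{0,1\}^{k} \,\bigr\},
\]
viewed as a multi-subset of $G$ of cardinality $2^{k}$. Every element is a word of length at most $k$ in $S\cup S^{-1}$, so $L(A)\leq k$ and the length claim is immediate. The whole content of the theorem is therefore the detection probability bound, which I would reduce to a purely combinatorial statement about coset-walks.

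I would deduce the bound from the following strengthening, proved by induction on $k$: \emph{for any group $\Gamma$, any tuple $g_{1},\ldots,g_{k}\in\Gamma$ (not required to be minimal or even generating), and any subgroup $H\leq \Gamma$ with $\{g_{1},\ldots,g_{k}\}\not\subseteq H$, one has}
\[
\Pro_{\eps\in\{0,1\}^{k}}\!\bigl[\,g_{1}^{\eps_{1}}\cdots g_{k}^{\eps_{k}}\in H\,\bigr]\;\leq\;\tfrac{1}{2}.
\]
Once this is known, Theorem \ref{thm:intro_exp_sized_set} is immediate: any proper $H\lneq G$ fails to contain the generating set $S$, so the strengthened bound applies with $\Gamma=G$ and yields $\delta(A;G)\geq \nicefrac{1}{2}$.

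For the inductive step, condition on the last coordinate $\eps_{k}$. Writing $w'(\eps'):=g_{1}^{\eps_{1}}\cdots g_{k-1}^{\eps_{k-1}}$, we get
\[
\Pro[w(\eps)\in H] \;=\; \tfrac{1}{2}\Pro\bigl[w'(\eps')\in H\bigr] \;+\; \tfrac{1}{2}\Pro\bigl[w'(\eps')\in Hg_{k}^{-1}\bigr].
\]
If $g_{k}\notin H$ then the right cosets $H$ and $Hg_{k}^{-1}$ are disjoint, so the two probabilities sum to at most $1$ and the bound follows. If $g_{k}\in H$ then $Hg_{k}^{-1}=H$ and the two probabilities coincide; moreover, the hypothesis forces some $g_{j}$ with $j<k$ to lie outside $H$, so the inductive hypothesis applied to the shorter tuple $(g_{1},\ldots,g_{k-1})$ bounds each of them by $\nicefrac{1}{2}$.

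\textbf{Main obstacle.} The only subtle point is choosing the right inductive statement. A naive induction on ``rank $k$'' breaks down because $\langle g_{1},\ldots,g_{k-1}\rangle$ may have rank strictly less than $k-1$, so one cannot recurse on it as a rank-$(k-1)$ group. Dropping all generating/minimality assumptions and retaining only the weak hypothesis ``$H$ does not contain all of $g_{1},\ldots,g_{k}$'' makes the recursion self-contained and essentially trivial, and it is precisely the non-commutative analogue of the familiar fact that a proper subspace of $\FF_{2}^{k}$ excludes at least half of the standard-basis sign combinations.
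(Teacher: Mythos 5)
Your proof is correct, and the construction is identical to the paper's, but the analysis takes a different route. The paper fixes a \emph{specific} witness: it picks the \emph{smallest} index $i$ with $x_{i}\notin H$, pairs each subset $S$ not containing $x_{i}$ with $S'=S\cup\{x_{i}\}$, and observes that $x_{S'}x_{S}^{-1}$ is a conjugate of $x_{i}$ by the prefix $x_{1}^{s_{1}}\cdots x_{i-1}^{s_{i-1}}\in H$, so at most one of $x_{S},x_{S'}$ lies in $H$; this perfect matching immediately gives $|A\cap H|\le 2^{k-1}$ in a single step. You instead run a clean induction on $k$, conditioning on the last coordinate $\eps_{k}$ and splitting on whether $g_{k}\in H$: in the disjoint-coset case ($g_{k}\notin H$) the two conditional probabilities sum to at most $1$, and in the other case you recurse on the $(k-1)$-tuple. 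Your inductive strengthening --- dropping any generating or minimality hypothesis and keeping only ``$\{g_{1},\dots,g_{k}\}\not\subseteq H$'' --- is exactly the right move, and it is the same underlying algebraic fact as the paper's Observation~\ref{obs:one_trick}, just packaged recursively rather than via an explicit bijection. What the paper's argument buys is a one-shot combinatorial matching with no induction and an explicit witness index (the ``smallest missing generator''), which is the form that generalizes directly to the syndrome machinery of Section~\ref{sec:randomized_construction}; what your argument buys is a shorter, assumption-free inductive statement that avoids needing the prefix to lie in $H$ at all. Both are complete and correct.
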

Our construction in Theorem \ref{thm:intro_exp_sized_set} can be seen as a non-commutative analogue of the Hadamard code. It gives words of length linear in $k$, which is optimal (up to the specific constant) as Proposition \ref{prop:running_time} shows.
By applying careful random sampling  on this construction, combined with code composition techniques, we deduce our second result.
\begin{thm}\label{thm:intro_poly_sized_set}
    Every group of rank $k$ has a subset $A$ with $|A|=O(k\log^{13} k)$, $L(A)=O(k\log k\log\log k)$ and $\delta(A;G)\geq \nicefrac{1}{32}$.
\end{thm}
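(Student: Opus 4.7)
The plan is to shrink the universal Hadamard code $A_0$ of Theorem~\ref{thm:intro_exp_sized_set} (of size $2^k$, length $k$, and detection probability $\nicefrac{1}{2}$) down to size $O(k\log^{13}k)$ by uniform random subsampling, with a code-composition step added in order to handle maximal subgroups of unbounded index. By the footnote to \eqref{eq:defn_detection_prob} it suffices to ensure $|A\cap M|/|A|\leq \nicefrac{31}{32}$ for every \emph{maximal} subgroup $M\leq G$. A standard transitive-action argument bounds the number of subgroups of index exactly $m$ in a $k$-generated group by $m\cdot (m!)^k$: each such subgroup is the stabilizer of a marked point in a transitive $G$-action on $[m]$, and such an action is determined by the images of the $k$ generators in $S_m$. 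Thus the number of maximal subgroups of index at most $N$ is $\exp\bigl(O(kN\log N)\bigr)$.

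Draw $n$ elements $w_1,\dots,w_n$ from $A_0$ uniformly and independently and let $A=(w_1,\dots,w_n)$. For any fixed maximal subgroup $M$ we have $\Ex[|A\cap M|]\leq n/2$, so Hoeffding's inequality yields
\[
\Pro\!\left[\,|A\cap M|/|A|>\tfrac{31}{32}\,\right]\leq 2\exp(-\Omega(n)).
\]
A union bound over all maximal subgroups of index at most $N$ is affordable as soon as $n=\Omega(kN\log N)$. Choosing $N=\Theta(\log^{12}k)$ gives $n=O(k\log^{13}k)$ and guarantees $|A\cap M|/|A|\leq\nicefrac{31}{32}$ for all maximal subgroups of index $\leq N$; since the sample is drawn from $A_0$, at this stage $L(A)\leq k$.

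The main obstacle is the maximal subgroups of index greater than $N$ (possibly infinite), of which there may be infinitely many and to which a union bound cannot be directly applied. To handle these, I would invoke a code-composition step in the spirit of classical concatenation: index the sample by a good outer linear code over a $\mathrm{polylog}(k)$-size alphabet, and replace each sampled Hadamard word by a short controlled tuple of conjugates or products dictated by the outer code, so that any maximal subgroup $M$ of very large index is automatically detected by the inner Hadamard component through its induced action on a set of size exceeding $N$. The length bound $L(A)=O(k\log k\log\log k)$ then emerges by tracking how two successive layers of composition lengthen each Hadamard word of length $k$ by a factor of $\log k\cdot\log\log k$. The delicate point, and the bulk of the work, is showing that this composition preserves a constant detection probability in the non-commutative setting, where proper subgroups do not respect the product structures that make classical code concatenation straightforward to analyze; this is precisely where the explicit combinatorial form of the universal Hadamard code from Theorem~\ref{thm:intro_exp_sized_set} is essential.
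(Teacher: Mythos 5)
Your outline has a genuine gap. A union bound over maximal subgroups of bounded index cannot cover all proper subgroups of $\cF_k$: the free group has maximal subgroups of unboundedly large finite index and also of infinite index, and these lie entirely outside the reach of the counting argument. You correctly flag this obstacle, but the proposed remedy --- replacing each sampled Hadamard word by a ``controlled tuple of conjugates or products'' indexed by an outer code so that any large-index maximal subgroup is ``automatically detected'' --- is not a precise construction and is not backed by any analysis; as it stands, it does not obviously salvage the index-counting route, and in particular it says nothing at all about infinite-index maximal subgroups, which never appear in the index-based census.

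The paper avoids this difficulty at the root by clustering \emph{all} proper subgroups of $\cF_k$ into only $2^k-1$ bins according to their \emph{syndrome} $Synd(H;B)=B\setminus H$ in a fixed basis $B$ (see \eqref{eq:defn_Sigma_C}); this clustering is the decisive idea missing from your proposal, since it replaces the union bound over infinitely many subgroups by a union bound over $2^k-1$ syndrome classes. But it also forces a different sampling scheme: for a syndrome $C$ of size $\sigma$, a uniform random $S\subseteq B$ satisfies $|S\cap C|=1$ --- the event guaranteeing $x_S\notin \Sigma_C$ via Observation~\ref{obs:one_trick} --- with probability only $\sigma 2^{-\sigma}$, which decays exponentially in $\sigma$. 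So the paper samples at $\lceil\log k\rceil$ scales, including each generator with probability $2^{-\ell}$ for $\ell=1,\dots,\lceil\log k\rceil$, ensuring that for every syndrome size some scale succeeds with constant probability. This gives $|A|=O(k\log k)$ and detection probability $\Omega(\nicefrac{1}{\log k})$ against every $\Sigma_C$ (Theorem~\ref{thm:first_iteration}), which the concrete composition step of Proposition~\ref{prop:amplification_to_const}, iterated twice as in Corollary~\ref{cor:two_iterations}, then amplifies to $\nicefrac{1}{32}$ with the stated size and length bounds. Both the syndrome decomposition and the multi-scale biased sampling are essential; uniform sampling from the Hadamard code combined with a finite-index union bound, even with a concatenation step bolted on, does not close the argument.
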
 
In fact, this is one of an infinite sequence of results that depend on the number of code composition iterations one applies --- for the above, we compose twice. In each such iteration, the (asymptotic) size of $A$ gets better, while  both the detection probability and the constants involved in the bounds on $A$ get worse. For the exact result, see Section  \ref{sec:iterative}. 

By using the iterative encoding technique of \cite{spielman1995linear}, we provide in Section \ref{sec:Spielman} good universal codes, which  proves our Main Theorem  \ref{conj:main}:
\begin{thm}[Main Theorem: Existence of good universal  codes]\label{thm:Spielman_construction}
    There are explicit good universal  codes with polynomial word length. Namely, every group $G$ of rank $k$ has an explicit subset $A$  with $|A|\leq 8k$, $L(A)={\rm poly}(k)$ and $\delta(A;G)=\Omega(1).$
\end{thm}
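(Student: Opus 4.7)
The strategy is to iteratively apply Spielman's encoding scheme in the group-theoretic setting, using the test subset of Theorem \ref{thm:intro_poly_sized_set} as the base code inside the recursion. Recall that in the classical linear setting, Spielman constructs a rate-constant code of linear blocklength by a recursion roughly of the form $C_k(x) = (x,\ C_{k/2}(R \cdot x))$, where $R\colon \FF^{k} \to \FF^{k/2}$ is a sparse linear map coming from an expander graph and $C_{k/2}$ is a smaller recursive instance of the code. The expander-mixing lemma guarantees that distance is preserved through the hashing step, while each recursion level only adds a blocklength of the same order as the recursive instance below it, so summing a geometric series gives a total linear blocklength and recursion depth $O(\log k)$.

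The plan is to mirror this structure in the free group $F_k$. I would define test subsets $A_k \subseteq F_k$ inductively as the union of a \emph{direct block} of $O(k)$ short words in the generators of $F_k$ and a \emph{recursive block} obtained by taking an inductively-constructed $A_{k/2} \subseteq F_{k/2}$ and substituting its free generators by carefully chosen product-words in $F_k$ --- the non-commutative analog of sparse linear hashing. If $k$ halves at every level and the direct block at level $i$ has size at most $4 \cdot (k/2^{i})$, the sizes sum geometrically to $|A_k| \leq 8k$. The recursion terminates at a rank small enough that Theorem \ref{thm:intro_poly_sized_set} (or even Theorem \ref{thm:intro_exp_sized_set}) can be applied directly inside the remaining budget. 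Each substitution step multiplies word length by a polynomial factor; with depth $O(\log k)$ the final length $L(A_k)$ is polynomial in $k$.

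The heart of the proof is a detection-preservation lemma. For any maximal subgroup $H \lneq G$, one must show that either the direct block detects $H$ with probability $\Omega(1)$, or the image of $A_{k/2}$ under the substitution does. In the latter case, the substituted product-words should induce, modulo $H$, something that behaves like a rank-$k/2$ generating system for the coset action $G \curvearrowright G/H$, whence the inductive detection guarantee applies and delivers the same constant $\delta_0$ (after a careful accounting of the constants at each level).

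The principal obstacle is designing the substitution words and proving the group-theoretic analog of the expander-mixing property: one must exhibit short product-words $w_1, \dots, w_{k/2} \in F_k$ such that, simultaneously for \textbf{every} maximal subgroup $H \lneq G$, the reductions $\{w_i \bmod H\}$ resemble a rank-$k/2$ generating set of $G/H$ with constant probability. In the abelian case this reduces to expander mixing; in the non-abelian case there is no linear structure to exploit, so one likely must combine products and commutators of generators with a probabilistic (or explicit expander-based) selection, and then verify robustness over the entire subgroup lattice of $G$. Controlling the accumulation of error across the $O(\log k)$ recursion levels so that the detection probability stays bounded below by a fixed $\delta_0 > 0$ is the technical crux of the argument.
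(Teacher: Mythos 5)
Your high-level plan — a Spielman-style recursion with expander-based substitution words, composed with a smaller test set, with sizes summing geometrically to $O(k)$ — is the right genre, and the paper does indeed prove the theorem this way (Lemma~\ref{lem:iterations_of_Spielman} and Corollary~\ref{cor:blabla}). But the part you flag as ``the technical crux of the argument'' and explicitly leave open is exactly the missing idea, and the target you set for it is the wrong one. You ask for substitution words whose images modulo $H$ ``resemble a rank-$k/2$ generating set of $G/H$,'' which would require some group-theoretic analog of expander mixing and would indeed be hard to control across the subgroup lattice. The paper never needs anything that strong. What it uses is the purely local Observation~\ref{obs:one_trick}: if a product word has exactly one factor outside $H$, the whole word is outside $H$. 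Combined with a \emph{unique-neighbor} expander (not an expander-mixing bound), this turns a set with small nonempty syndrome into one whose syndrome is larger by a factor $d/2$, with no algebraic structure required. The argument then proceeds by a case split on $|Synd(H;B)|$: if it is already large (at least $8\delta k$), the direct block $B$ detects; if it is small but nonzero, the first expander map pushes the syndrome into the input of the rank-$k$ code $A$, which guarantees $|Synd(H;E)|\geq\delta k$, and a second expander map boosts this back up to $\geq 8\delta k$ using $d\geq 16$.

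The consequence you are missing is that this makes the lemma a \emph{fixed point}: the same $\delta$ goes in and comes out, so there is no ``accumulation of error across $O(\log k)$ recursion levels'' to control at all. Relatedly, the paper's recursion runs bottom-up (start at a fixed $k_0$ with the trivial test set, namely four copies of the basis, with detection probability $1/k_0$, then double $k$ at each step) rather than your top-down halving down to a base supplied by Theorem~\ref{thm:intro_poly_sized_set}; that choice is convenient precisely because it lets the trivial base case initialize the invariant $\delta = 1/k_0 \leq \alpha/4$. So while your scaffolding (direct block plus substituted recursive block, geometric size budget, polynomial length via bounded recursion depth) matches the paper, the load-bearing mechanism — syndrome counting through unique-neighbor expansion, calibrated so that detection probability is exactly preserved — is absent from your proposal, and is what turns the sketch into a proof.
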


As in \cite{spielman1995linear}, the above construction uses almost optimal \emph{unique neighbor expanders} (which can be derived from \emph{lossless expanders}) as infrastructure.  Such (explicit) families were constructed in \cite{alon2002explicit,capalbo2002randomness}, and later also in \cites{asherov2023bipartite,cohen2023hdx, hsieh2023explicit,  golowich2024new}. Regarding the involved constants, even if one is willing to drop explicitness, the  probabilistic method argument (which we include in  Appendix \ref{sec:existence_lossless}) gives rise to $\delta(A;G)\geq 2^{-130}$,   which is not a  constant to be proud of, and the maximal length of the words in $A$ is $O(k^{10})$ in this case. In principle, one can extract the appropriate detection constant from the above explicit constructions of lossless expanders, as well as an explicit bound on the degree of the polynomial bounding the length of the words, but they will be worse than the $2^{-130}$ and $10$ cited above. We hope that the future will bring better methods, which will lead to reasonable detection constants. It also leaves open the following problem: Are there test sets $A$ with $L(A)=O(k),\ |A|=O(k)$ and $\delta(A;G)=\Omega(1)$? We show in Proposition \ref{prop:running_time} that $L(A)=\Omega(\delta(A;G)k)$, so this would be optimal. 
\begin{rem}\label{rem:one_trick}
    The  observation driving Theorems \ref{thm:intro_exp_sized_set}, \ref{thm:intro_poly_sized_set} and \ref{thm:Spielman_construction}  is quite straightforward: If $w$ is a word with letters drawn from $B=\{x_1,...,x_t\}\subseteq G$, and all letters in $w=\prod_{j=1}^\ell x_{i_j}^{\eps_j}$ ($i_j\in [t],\ \eps_{j}\in\{\pm 1\}$) are in $H\leq G$ except for exactly \textbf{one}  --- namely, there is an index $1\leq r\leq \ell$ such that $x_{i_j}\in H$ for every $j\neq r$ and $x_{i_r}\notin H$ --- then $w\notin H$.
\end{rem}

\begin{rem}\label{rem:intro_uniform_generation_of_universal_codes}
    In Theorems \ref{thm:intro_exp_sized_set}, \ref{thm:intro_poly_sized_set} and \ref{thm:Spielman_construction} we actually prove something slightly stronger. Note that the detection probability and size\footnote{Here it is important that we treat $A$ as a multi-set and not just a set, so its size is preserved by quotients.} of $A$ are preserved by quotients (Fact \ref{fact:basic_properties}). We construct an appropriate set $A$ for the free group, and as all rank $k$ groups are quotients of it, this construction is \emph{uniform} in the following sense: Regardless of the specific $G$, as long as it is of rank $k$ and a set of $k$ generators is provided, our construction is some fixed collection of words in these generators. \emph{One set of words to rule all rank $k$ groups}.
\end{rem}

Though  Theorem \ref{thm:Spielman_construction} already provides a good \textbf{universal}  code, we provide additional constructions of good codes for some sub-classes of groups using different methods. In Section \ref{sec:abelian}, a construction is provided for the class of abelian groups.
\begin{thm} [Good abelian  codes]\label{thm:abelian_case}
    Every abelian group $G$ of rank $k$ has an explicit subset $A$ with $|A|=O(k)$, with constant detection probability $\delta(A;G)=\Omega(1)$ and with at most exponential length  $L(A)\leq 2^{O(k)}$. 
\end{thm}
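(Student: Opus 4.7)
My plan has three steps: reducing to $G=\mathbb{Z}^k$, further reducing detection to $\FF_p$-quotients, and constructing $A$ explicitly as a union of a Chinese Remainder Theorem (CRT) lift for small primes and a Vandermonde-type code for large primes.

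First, I observe that every abelian group of rank $k$ is a quotient of $\mathbb{Z}^k$, and by Remark~\ref{rem:intro_uniform_generation_of_universal_codes} together with the fact that $|A|$ and $\delta(A;\cdot)$ descend under quotients, it suffices to exhibit $A\subseteq \mathbb{Z}^k$ with $|A|=O(k)$, $L(A)\leq 2^{O(k)}$, and $\delta(A;\mathbb{Z}^k)=\Omega(1)$. Next, I use that any proper subgroup of $\mathbb{Z}^k$ lies inside a maximal subgroup, which by simplicity of $\FF_p$ must be the kernel of a surjection $\phi_v(a)=v\cdot a \bmod p$ for some prime $p$ and nonzero $v\in \FF_p^k$. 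Hence it is enough to ensure that for every such pair $(p,v)$, at least a constant fraction of $a\in A$ satisfies $v\cdot a\not\equiv 0\pmod p$.

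I will take $A=A_1\cup A_2$ with $|A_1|,|A_2|=O(k)$. For $A_1$, I fix a threshold $P=C_1 k$ and, for each prime $p\leq P$, use the Gilbert--Varshamov bound to pick a generator matrix $M_p\in \FF_p^{k\times n_1}$ of an $[n_1,k,\delta_1 n_1]_p$-code with $n_1=O(k)$ and uniform relative distance $\delta_1>0$. Applying CRT entrywise produces an integer matrix $M\in\mathbb{Z}^{k\times n_1}$ with $M\equiv M_p\pmod p$ for every $p\leq P$ and with entries in $[0,\prod_{p\leq P}p)=[0,2^{O(k)})$ by the Prime Number Theorem; I set $A_1$ to be the columns of $M$. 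Then for $p\leq P$ and nonzero $v\in \FF_p^k$, $v^T M\equiv v^T M_p\pmod p$ is a nonzero codeword of weight $\geq \delta_1 n_1$, giving $\Omega(1)$-detection on $A_1$. For $A_2$, I use $A_2=\{(1,j,j^2,\ldots,j^{k-1})\}_{j=1}^{2k}$: for any prime $p>2k$ the evaluation points are distinct modulo $p$, any nonzero polynomial $P_v\in\FF_p[x]$ of degree $<k$ has at most $k-1$ roots among them, and so detection on $A_2$ is $\geq 1/2$.

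Choosing $P\geq 2k$ ensures every prime is covered by one of the two components, yielding $|A|=O(k)$ and $\delta(A;\mathbb{Z}^k)=\Omega(1)$; the CRT construction contributes $L(A_1)=2^{O(k)}$. The main obstacle will be tightening the word-length bound for $A_2$: the naive Vandermonde entries are of size $(2k)^{k-1}=2^{\Theta(k\log k)}$, slightly above the stated $2^{O(k)}$. To reach the tight bound one must replace $A_2$ by a bounded-alphabet matrix (entries in $\{0,1,\ldots,C\}$ for a fixed constant $C$) paired with a linear general-position argument, ensuring that any $k$ of its columns remain linearly independent over $\FF_p$ for all primes $p>C$; equivalently, one controls the prime divisors of the $k\times k$ minors so that they are absorbed into the CRT threshold.
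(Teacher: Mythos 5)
Your decomposition is correct down to the prime-by-prime reduction, and the route is genuinely different from the paper's. The paper constructs a \emph{single} integer matrix that is simultaneously a good code mod every prime, by taking a basis of $\ker(\pi_{\mathbb Z}\colon \mathbb Z^L\to\mathbb Z^R)$ where $\pi_{\mathbb Z}$ is the $\{0,1\}$-parity-check map coming from a unique-neighbor expander; the distance over every $\FF_p$ follows from the combinatorial unique-neighbor property (Claim~\ref{claim:unique_neighbor_implies_distance}), the mod-$p$ injectivity follows because $\ker\pi_{\mathbb Z}$ is a direct summand of $\mathbb Z^n$ (Claim~\ref{claim:prop_of_ker_of_Z_homs}), and the $2^{O(k)}$ bound on entries of a basis comes from Edmonds-style Gaussian elimination. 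Your scheme replaces this by a small-prime/large-prime dichotomy: CRT-glued GV matrices for $p\le P=\Theta(k)$ and a Vandermonde-style Reed--Solomon matrix for $p>P$. The upshot of your route is that it is elementary (no expanders) and the detection constants on each piece are clean; what it loses is uniformity, explicitness (GV is an existence bound, so $A_1$ as you wrote it is not explicit), and, crucially, the stated length bound.

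The length bound is the real gap, and I do not think your sketched repair closes it. As you note, the integer Vandermonde columns $(1,j,\dots,j^{k-1})$, $j\le 2k$, have $\ell_1$-norm about $(2k)^{k-1}=2^{\Theta(k\log k)}$, which exceeds the claimed $2^{O(k)}$. Your proposed fix is to replace $A_2$ by a matrix with entries in a fixed alphabet $\{0,\dots,C\}$ and arrange that all $k\times k$ minors have prime divisors below the CRT threshold $P$. But a $k\times k$ integer matrix with entries bounded by $C$ has determinant as large as $k!\,C^k=2^{\Theta(k\log k)}$, and there is no mechanism forcing this integer to be $O(k)$-smooth; the Vandermonde matrix has that smoothness precisely because its minors factor as $\prod_{a<b}(j_b-j_a)$ with each factor $\le 2k$, a feature that disappears the moment you truncate the alphabet. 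Since the prime-counting constraint forces $P=O(k)$ (otherwise $\prod_{p\le P}p$ itself exceeds $2^{O(k)}$), you cannot absorb those large minor primes into the threshold. Note also that you do not actually need bounded alphabet --- entries of magnitude $2^{O(k)}$ suffice for the theorem --- but producing a $k\times O(k)$ integer matrix with entries $2^{O(k)}$ that has constant relative distance over \emph{all} primes $p>P$ is essentially the content of the paper's Proposition~\ref{prop:matrices_to_abelian_case} and requires a new idea; within your framework it is neither proved nor reduced to something standard.
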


This construction  uses Tanner codes with underlying unique neighbor expanders, and thus the specific constants depend again on the choice of these expanders. For concreteness, allowing a probabilistic argument using (for example) Lemma 1.9 from \cite{Hoory_Linial_Wigderson}, one can attain $|A|=4k$ and $\delta(A;G)\geq \nicefrac{1}{320}$.  Any result on abelian groups holds automatically also for nilpotent groups, as Corollary \ref{cor:nilpotent} demonstrates. 
\begin{rem}\label{rem:simultanious2}
    Similar to the content of Remark \ref{rem:intro_uniform_generation_of_universal_codes}, Theorem \ref{thm:abelian_case} is resolved by a construction over free abelian groups. For every $k$, our construction provides an encoding matrix with integer coefficients,   that induces good codes over \textbf{all} primes $p$ simultaneously. We do not know of previous works studying this property.
\end{rem}

In Section \ref{sec:PMSG}, we prove the following:
\begin{thm}[Good finite solvable  codes]\label{thm:intro_solvable_groups}
    Every finite solvable group $G$ of rank $k$ has a subset $A$ with $|A|\leq 85 k$ and with detection probability of at least $\nicefrac{1}{10}$.
\end{thm}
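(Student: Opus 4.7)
The plan is to apply the probabilistic method, leveraging the classical Polynomial Maximal Subgroup Growth (PMSG) property of finite solvable groups. The guiding intuition is that a uniformly random multi-subset of $G$ of size $85k$ should, with positive probability, achieve detection probability at least $\nicefrac{1}{10}$ against every maximal subgroup, provided the number of maximal subgroups of each index grows only polynomially.

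I would proceed as follows. First, invoke the quantitative PMSG bound for finite solvable groups (in the spirit of Mann and Lubotzky--Mann): there is an absolute constant $c>0$ such that for every finite solvable group $G$ of rank $k$ and every $n\geq 2$, the number $m_n(G)$ of maximal subgroups of index $n$ in $G$ satisfies $m_n(G)\leq n^{ck}$. Next, draw $N=85k$ elements $a_1,\ldots,a_N$ independently and uniformly at random from $G$ and let $A=\{a_1,\ldots,a_N\}$ be the resulting multi-subset. For a fixed maximal subgroup $M\leq G$ of index $n$, the indicators $1_{a_j\in M}$ are i.i.d.\ Bernoulli with mean $\nicefrac{1}{n}$, so Chernoff's inequality yields
\[
\Pro\bigl[\,|A\cap M|\geq (1-\nicefrac{1}{10})N\,\bigr]\leq \exp\bigl(-N\cdot D(\nicefrac{9}{10}\,\|\,\nicefrac{1}{n})\bigr),
\]
where $D(\cdot\,\|\,\cdot)$ denotes binary KL-divergence; a direct estimate gives $D(\nicefrac{9}{10}\,\|\,\nicefrac{1}{n})\geq c'\log n$ for an absolute $c'>0$ and all $n\geq 2$. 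Combining via the union bound --- and using that maximal subgroups of a finite solvable group have prime-power index --- yields
\[
\Pro\bigl[\,\delta(A;G)<\nicefrac{1}{10}\,\bigr]\leq \sum_{n\geq 2}m_n(G)\cdot n^{-c'N}\leq \sum_{n\geq 2}n^{k(c-85c')}.
\]
Calibrating $85$ and $\nicefrac{1}{10}$ so that $85c'>c$ makes this series strictly less than $1$ for every $k\geq 1$, and the probabilistic method then produces a test subset $A$ with the claimed parameters.

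The principal obstacle I anticipate is pinning down the correct quantitative PMSG constant $c$ for finite solvable groups and matching it against the Chernoff constant $c'$ so that the specific numerical values $85$ and $\nicefrac{1}{10}$ in the theorem come out cleanly. The required input is the structure of maximal subgroups of finite solvable groups --- in particular, that they have prime-power index and, via the Fitting series, arise either as pullbacks of maximal subgroups of $G/F(G)$ or as complements to minimal normal subgroups sitting inside the crown components of the Fitting subgroup. A careful count exploiting this crown decomposition is what yields a sharp enough value of $c$; once that is in hand, the probabilistic argument closes routinely, and one may even hope that the exponential slack in the bound could support tightening either the size constant or the detection probability.
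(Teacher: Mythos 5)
Your proposal follows the paper's proof route exactly: sample $O(k)$ elements i.i.d.\ uniformly from $G$, invoke Mann's polynomial maximal-subgroup-growth bound $m_N(G)\leq N^{E'k}$ for (pro)solvable groups, apply a Chernoff-type tail estimate to the binomial count $|A\cap M|$ for each maximal $M$ of index $N$, and finish with a union bound over $N\geq 2$ calibrated so the series sums to less than $1$. The paper phrases the tail bound via $\binom{n}{(1-\delta)n}N^{-(1-\delta)n}\leq N^{(H_2(1-\delta)-(1-\delta))n}$ rather than in KL-divergence form, and states the argument at the level of uniformly PMSG families of profinite groups before specializing to finite solvable groups, but these are cosmetic differences from your approach.
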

In fact, we  prove something much more general regarding classes of finite groups with restrictions on their sections.  This proof is again probabilistic, and we cannot even bound the lengths of the elements in $A$ as words in the given generators. On the other hand, as the theorem suggests, we get effective bounds on the size of $A$ and its detection probability.

Let us summarize the results appearing in this paper in the following table:
\begin{center}
\begin{tabular}{ |c|c|c|c|c|c|c| } 
 \hline
 Class of groups & $|A|=n(k)$ & $\delta(A;G)=\delta$ & $L(A)$ & Explicit? & GV bound? & Where proved\\ 
 \hline\hline
 Finite abelian groups & $O(k)$ & $\Omega(1)$ & $O(k)$ & No & Yes & Section \ref{sec:fin_abelian_gps}\\ 
 All groups & $2^k$ & $\nicefrac{1}{2}$ & $k$ & Yes & No & Theorem \ref{thm:exp_set}\\ 
  All groups & $O(k\log^{13}k)$ & $\nicefrac{1}{32}$ & $O(k\log k\log\log k)$ & No & No & Corollary \ref{cor:two_iterations}\\ 
   All groups & $4k$ & $\Omega(1)$ & $\textrm{poly}(k)$ & Yes&  No & Corollary 
   \ref{cor:blabla}\\ 
    Abelian groups & $O(k)$ & $\Omega(1)$ & $\textrm{exp}(k)$ & Yes & No & Proposition  
   \ref{prop:matrices_to_abelian_case}\\
    Finite solvable groups & $85k$ & $\nicefrac{1}{10}$ & Unbounded & No& No & Theorem  
   \ref{thm:fin_sol_gps}\\
 \hline
\end{tabular}
\end{center}
\ \\
We did not try to optimize our parameters. Some improvements are probably possible. What is really interesting is to understand the tradeoff  between $|A|$, $\delta(A;G)$ and $L(A)$ --- specifically, as we mentioned before, are there  universal codes that achieve the Gilbert--Varshamov bound, or is there an obstruction in the non-commutative case (or even the general abelian case) that does not exist in the classical theory? 
More on this (and other problems) in the open problem section, Section \ref{sec:open_problems}. 

\subsection*{Structure of the paper}
  Section \ref{sec:proper_subgps_and_ECCs} provides basic observations and analysis of test subsets. Specifically, it relates it to the theory of error correcting codes.  Section \ref{sec:non-explicit_solution} is devoted to the construction of universal codes, and includes the proofs of Theorems \ref{thm:intro_exp_sized_set}, \ref{thm:intro_poly_sized_set} and \ref{thm:Spielman_construction}.
  Section \ref{sec:abelian} is devoted to 
(infinite) abelian codes  and contains the proof of Theorem \ref{thm:abelian_case}. In Section \ref{sec:PMSG} we study codes over certain classes of profinite groups with restricted  maximal subgroup growth, which in turn proves Theorem \ref{thm:intro_solvable_groups}.  A discussion on running time appears in Section \ref{sec:running_time}, while open problems and further research directions appear in Section \ref{sec:open_problems}.

\subsection*{Acknowledgements}
We would like to thank Lewis Bowen, Oded Goldreich, Noam Kolodner, Sandro Mattarei, Carlo Pagano,  Doron Puder,   Thomas Vidick and Avi Wigderson for helpful discussions along the preparation of this work. We also want to thank the \emph{Midrasha on Groups} in Weizmann institute for its role in forming this collaboration.  

Michael Chapman acknowledges with gratitude the Simons Society of Fellows and is supported by a grant from the Simons Foundation (N. 965535). 
Irit Dinur is supported  by ERC grant 772839, and ISF grant 2073/21.
Alex Lubotzky is supported by the European Research Council (ERC)
under the European Union's Horizon 2020 (N. 882751), and by a research grant from the Center for New Scientists at the Weizmann Institute of Science.

\section{Proper subgroup testing and Error correcting codes}\label{sec:proper_subgps_and_ECCs}

\subsection{Analysis} 
Let $G$ be a finitely generated group. The \emph{rank}  of $G$, $k=\textrm{rank}(G)$, is the  minimal  size of a generating set of it.\footnote{It is common to denote the rank of $G$ by $d(G)$. But, because of the connections to error correcting codes, we prefer to keep $d$ for the Hamming metric and distance of codes, and choose $k$ for the rank.} 
  Let $A$ be a finite multi-set of elements of a group $G$.
As was defined in the introduction \eqref{eq:defn_detection_prob}, 
   the \emph{detection probability} of $A$ is 
    \[
        \delta(A;G)=\inf\left\{1-\frac{|A\cap H|}{|A|}\middle|H\lneq G\right\}.
    \]
    Note that $1-\frac{|A\cap H|}{|A|}=\frac{|A-H|}{|A|}$, where $A-H=\{a\in A\mid a\notin H\}$ is the set difference.

\begin{fact}\label{fact:basic_properties}
Let $G$ be a group and $A$ a finite multi-subset of $G$.
    \begin{enumerate}
        \item The detection probability $\delta(A;G)$ is positive if and only if $A$ generates $G$.
        \item If $\pi\colon G\to G'$ is an epimporphism, then $\pi(A)$ has detection probability which is at least as good as $A$, namely
        $\delta(\pi(A);G')\ge \delta(A;G).$
        \item An epimporphism $\pi\colon G\to G'$  is called a  Frattini extension if its kernel is contained in the Frattini subgroup of $G$. Equivalently, every maximal subgroup of $G$ contains $\ker \pi$. For Frattini extensions, we have $\delta(\pi(A);G')=\delta(A;G)$. 
    \end{enumerate}
\end{fact}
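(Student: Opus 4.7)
The plan is to prove the three parts of the fact in order, leveraging the observation from the footnote that the infimum defining $\delta(A;G)$ may be taken over maximal subgroups of $G$. A useful preliminary remark is that the quantities $|A\cap H|/|A|$ take values in the finite set $\{0,1/|A|,\ldots,1\}$, so the infimum in the definition is in fact a minimum (as soon as some maximal subgroup exists, which is true for any finitely generated $G$ by a standard Zorn's lemma argument on chains of proper subgroups).

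For part (1), I would argue both directions. If $A$ does not generate $G$, then $H=\langle A\rangle$ is a proper subgroup containing $A$, so $|A\cap H|=|A|$ and $\delta(A;G)=0$. Conversely, if $A$ generates $G$, then $A\not\subseteq H$ for every proper $H$, so in particular $|A\cap H|\le |A|-1$ for every maximal subgroup; thus $\delta(A;G)\ge 1/|A|>0$ by the remark above.

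For part (2), let $\pi\colon G\twoheadrightarrow G'$ be an epimorphism and let $H'\lneq G'$ be proper. Then $H:=\pi^{-1}(H')$ is proper in $G$ because $\pi$ is surjective. Working with multisets, I would observe that
\[
|\pi(A)\cap H'|=|\{a\in A:\pi(a)\in H'\}|=|\{a\in A: a\in \pi^{-1}(H')\}|=|A\cap H|,
\]
and of course $|\pi(A)|=|A|$. Hence $1-|\pi(A)\cap H'|/|\pi(A)|=1-|A\cap H|/|A|\ge \delta(A;G)$, and taking the infimum over all proper $H'\lneq G'$ yields the desired inequality.

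For part (3), the key point is that when $\ker\pi\subseteq\Phi(G)$, the correspondence $M\leftrightarrow\pi(M)$ is a bijection between maximal subgroups of $G$ and maximal subgroups of $G'$. Every maximal $M\le G$ contains $\ker\pi$ by the Frattini hypothesis, so $M=\pi^{-1}(\pi(M))$ and $\pi(M)$ is proper in $G'$; a short argument using the correspondence theorem shows $\pi(M)$ is in fact maximal, and $\pi^{-1}$ inverts this on maximal subgroups of $G'$. For such a corresponding pair, the computation from part (2) gives $|A\cap M|=|\pi(A)\cap\pi(M)|$, so the two infima over maximal subgroups coincide, proving $\delta(\pi(A);G')=\delta(A;G)$. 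The only conceptually delicate step is verifying this bijection on maximal subgroups, and that is the main (albeit routine) obstacle; the rest is bookkeeping with preimages and multisets.
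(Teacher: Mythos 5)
Your proposal is correct and follows essentially the same route as the paper's proof sketch: part (1) via $\langle A\rangle$, part (2) via the correspondence $H'\mapsto\pi^{-1}(H')$ and multi-set cardinality, and part (3) by combining part (2) with the observation that $\ker\pi\subseteq M$ for every maximal $M$. The one small stylistic difference is in part (3): you establish the full bijection between maximal subgroups of $G$ and of $G'$, whereas the paper only needs the forward direction $M\mapsto\pi(M)$ together with $\pi(A\cap M)=\pi(A)\cap\pi(M)$ to get the reverse inequality $\delta(A;G)\geq\delta(\pi(A);G')$; both are fine, and yours is marginally more symmetric at the cost of one extra (routine) verification.
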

\begin{proof}[Proof sketch]
\ 
    \begin{enumerate}
        \item Note that $\langle A\rangle$ is a subgroup of $G$.
        \item By the correspondence theorem (fourth isomorphism theorem), for every $H'\lneq G'$ there is a $\ker \pi \leq H\lneq G$ such that $\pi (H)=H'$. Furthermore, as we use multi-sets, $|A|=|\pi(A)|$ and $|A\cap H|=|\pi(A\cap H)|\leq|\pi(A)\cap H'|$. Since this goes over all subgroups of $G'$ (but not necessarily all subgroups of $G$), we get the conclusion. 
        \item Note again by the correspondence theorem, that if $M\lneq G$ is a maximal subgroup, and $\ker \pi \subseteq M$, then $\pi(M)\lneq G'$. Also, as  $\ker \pi \subseteq M$, $\pi(A\cap M)=\pi(A)\cap \pi(M)$. This provides the reverse inequality.
    \end{enumerate}
\end{proof}

\begin{rem}\label{rem:density}\
\begin{enumerate}
\item 
 Fact \ref{fact:basic_properties} shows why constructing linearly sized test sets with constant detection probability for free groups would imply the existence of such sets for all groups.
    \item The free group $\cF_k$ is a quotient of $\cF_{s}$ for every $s\geq k$. So, by Fact \ref{fact:basic_properties},  it is enough to construct linearly sized test sets with constant detection probability for  \emph{dense enough} collections of $k\in \mathbb{N}$, and not necessarily to all of them.
\end{enumerate}
\end{rem}

\subsection{Error correction}\label{sec:ECC}
Let $p$ be a prime number, and let $\FF_p=\{0,1,...,p-1\}$ be the field with $p$ elements. Given two vectors $u,v\in \FF_p^n$, the Hamming distance between them is $d_H(u,v)=|\{i \in [n]\mid u_i\neq v_i\}|$. The Hamming weight $w_H(v)$ of a vector $v\in \FF_p^n$ is its distance to the all $0$'s vector $\vec 0$.
A (linear) $[n,k,d]_p$-code $\mathcal{C}$  is a $k$-dimensional
linear subspace of $\mathbb{F}_{p}^{n}$, such that the Hamming distance between any two vectors in the code is
 at least $d$. The parameter $n$ is  the \emph{length} of the code. The parameter $k$ is the \emph{dimension} of the code, while $\nicefrac{k}{n}$ is  the \emph{rate} of the code.  Lastly, the parameter $d$ is the \emph{distance} of the code, while $\nicefrac{d}{n}$ is its \emph{normalized distance}.   A family of codes is called \emph{good} if they have a uniform lower bound on their normalized distance and rate.
 
Let $\enc \colon\mathbb{F}_{p}^{k}\to\mathbb{F}_{p}^{n}$
be a linear embedding such that ${\rm Im}(\enc)=\mathcal{C}.$
Let $\pi_{i}\colon\mathbb{F}_{p}^{n}\to\mathbb{F}_{p}$ be the projection
on the $i^{{\rm th}}$ coordinate, and define ${\enc}_{i}\colon\mathbb{F}_{p}^{k}\to\mathbb{F}_{p}$
to be the composition $\pi_{i}\circ{\enc}.$ Define a bilinear
product 
\begin{align*}
\forall v,w & \in\mathbb{F}_{p}^{k}\ \colon\ \left\langle v,w\right\rangle =\sum_{i=1}^{k}v_{i}w_{i}.
\end{align*}
Given $\alpha\in\mathbb{F}_{p}^{k},$ define the functional $\varphi_{\alpha}\colon\mathbb{F}_{p}^{k}\to\mathbb{F}_{p}$
by $\varphi_{\alpha}(v)=\left\langle \alpha,v\right\rangle ,$ and
let $\alpha^{\perp}={\rm ker}\varphi_{\alpha}.$ Also, for every functional
$\varphi\colon\mathbb{F}_{p}^{k}\to\mathbb{F}_{p}$, there exists an
$\alpha\in\mathbb{F}_{p}^{k}$ such that $\varphi=\varphi_{\alpha}.$
Since $\left\{ {\enc}_{i}\right\} _{i\in[n]}$ are functionals,
there exists a collection $\left\{ v_{i}\right\} _{i\in n}\subseteq\mathbb{F}_{p}^{k}$
such that ${\enc}_{i}=\varphi_{v_{i}}$.

Assume  $A\subseteq\cF_k$ is the set guaranteed by Theorem  \ref{conj:main}, where $\cF_k$ is the free group on basis $B=\{x_1,...,x_k\}$, and recall  that $\delta_0$ is the lower bound on its detection probability. By applying on $\cF_k$ abelianization mod $p$, $\Phi_p\colon \cF_k\to \FF_p^k$, the (multi-)set $Z=\Phi(A)$ satisfies both $|Z|=|A|\leq C_0\cdot k$ and $\delta(Z;\FF_p^k)\geq \delta_0$  (by Fact \ref{fact:basic_properties}).
As we remarked when defining the detection probability \eqref{eq:defn_detection_prob}, we may assume the proper linear subspace $H$ is maximal, i.e.,
it is of co-dimension $1$. Let $\alpha\in\mathbb{F}_{p}^{k}$ be a vector such that $H=\alpha^{\perp}$.
I.e., if $h$ is the indicator function  of $H$, then $h(v)=1$ if and only if $\langle\alpha,v\rangle=0$.
Therefore, the fact the detection probability of $Z$ is bounded from below by $\delta_0$ is equivalent to the condition 
\[
\forall \vec{0}\neq\alpha\in\mathbb{F}_{p}^{k}\ \colon \ \ \left|\left\{ v\in Z\mid\left\langle \alpha,v\right\rangle \neq 0\right\} \right|\geq\delta_0|Z|.
\]
Let $|Z|=n$,  $\enc\colon \FF_p^k\to \FF_p^n$ be $\enc(\alpha)=(\langle\alpha,v\rangle)_{v\in Z}$, and  $\cC=\Img(\enc)$. Then $\cC$ is an $[n,k,\delta_0 n]_p$-code. Thus:
\begin{cor}\label{cor:prop_subgp_test_implies_codes}
    A collection of sets as in  Theorem \ref{conj:main} induces, in a uniform way,  a collection of good codes on $\FF_p^k$ for every $p$.
\end{cor}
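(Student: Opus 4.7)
The plan is to take the set $A$ furnished by Theorem \ref{conj:main} applied to the free group $\cF_k$ on the basis $B=\{x_1,\ldots,x_k\}$, and to push it through the abelianization-mod-$p$ homomorphism $\Phi_p\colon \cF_k \to \FF_p^k$ to obtain a multi-subset $Z=\Phi_p(A)\subseteq \FF_p^k$. By Fact \ref{fact:basic_properties}(2), since $\Phi_p$ is an epimorphism, the detection probability cannot decrease, so $\delta(Z;\FF_p^k)\geq \delta(A;\cF_k)\geq \delta_0$. Because we work with multi-sets, $|Z|=|A|\leq C_0\cdot k$. Note that this construction does not depend on $p$: one fixed collection of words in $\cF_k$ is specialized to every prime field, which supplies the ``uniformity'' clause of the statement (cf.\ Remark \ref{rem:intro_uniform_generation_of_universal_codes}).

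Next, I would translate from the subgroup picture to the coding picture. Maximal subgroups of $\FF_p^k$ are precisely the codimension-one linear subspaces, i.e., the hyperplanes $\alpha^\perp=\ker \varphi_\alpha$ for $\vec 0\neq \alpha \in \FF_p^k$. Combined with the footnote to \eqref{eq:defn_detection_prob} (the infimum in the definition of detection probability may be restricted to maximal subgroups), the bound $\delta(Z;\FF_p^k)\geq \delta_0$ reads: for every $\vec 0 \neq \alpha \in \FF_p^k$,
\[
\left|\{v\in Z\mid \langle \alpha,v\rangle \neq 0\}\right| \;\geq\; \delta_0\,|Z|.
\]

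Then I would set $n=|Z|$ and define $\enc\colon \FF_p^k\to \FF_p^n$ by $\enc(\alpha)=(\langle\alpha,v\rangle)_{v\in Z}$, letting $\cC=\Img(\enc)$. The displayed inequality above is exactly the Hamming-weight bound $w_H(\enc(\alpha))\geq \delta_0\cdot n$ for every non-zero $\alpha$. Because $Z$ generates $\FF_p^k$ (as its detection probability is positive), $\enc$ is injective, whence $\dim_{\FF_p}\cC=k$ and the minimum distance of $\cC$ equals its minimum non-zero Hamming weight. Thus $\cC$ is a linear $[n,k,d]_p$-code with $n\leq C_0\cdot k$ and $d\geq \delta_0 \cdot n$, so the rate is at least $1/C_0$ and the normalized distance is at least $\delta_0$; i.e., a good code. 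Running this argument for every prime $p$ using the \emph{same} set $A\subseteq \cF_k$ yields the promised collection of good codes, uniformly.

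The only nontrivial input is Theorem \ref{conj:main} itself; the remainder is a translation of vocabulary. The one thing worth care is the multi-set bookkeeping in step one: if we had insisted $A$ be a set, then $\Phi_p$ could collapse distinct elements and destroy both the size estimate and the detection probability, whereas with multi-sets the preservation of $|Z|=|A|$ and the inequality $|A\cap H|=|\Phi_p(A\cap H)|\leq |\Phi_p(A)\cap \Phi_p(H)|$ used in Fact \ref{fact:basic_properties}(2) go through verbatim. I do not foresee any genuine obstacle here.
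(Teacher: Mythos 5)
Your proposal is correct and follows essentially the same route as the paper: push $A\subseteq\cF_k$ through $\Phi_p$, invoke Fact \ref{fact:basic_properties}(2) for the detection probability, identify maximal subgroups of $\FF_p^k$ with hyperplanes $\alpha^\perp$, and read the detection bound as a lower bound on the Hamming weight of $\enc(\alpha)=(\langle\alpha,v\rangle)_{v\in Z}$. The only cosmetic difference is that you make the injectivity of $\enc$ (hence $\dim\cC=k$) explicit, which the paper states in the introductory discussion rather than in the Section~\ref{sec:ECC} derivation.
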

The observation of Corollary \ref{cor:prop_subgp_test_implies_codes} is  the key to most of our results ---  we use  constructions and methods from the theory of error correcting codes as inspiration to try to resolve the opposite direction, i.e., to build \textbf{test subsets} out of codes, or phrased differently --- \textbf{universal codes} using classical coding theory techniques. 
\\

If the above perspective on our problem as non-commutative  error correction was not motivating enough, we provide the test case of \textbf{finite} abelian groups, where the connection between the problems is quite tight (Section \ref{sec:abelian} resolves the general infinite abelian groups case).

\subsection{Finite abelian groups}\label{sec:fin_abelian_gps}
As we have shown above, if we want to find a subset $A$ of  $\FF_p^k$ of size $n$ with detection probability $\delta>0$, then it is equivalent to finding an error correcting code $[n,k,\delta n]_p$. In particular, we can use the Gilbert--Varshamov bound, and find sets of size  $n\approx\frac{k}{1-H_p(\delta)}$ with detection probability $\delta>0$, where $H_p$ is the $p$-ary entropy function $H_p(x)=x\log_p(p-1)-x\log_px-(1-x)\log_p(1-x)$. 

Let $G$ be a finite abelian group. Every such group is isomorphic to a (finite) product of cyclic groups of prime power order $\nicefrac{\mathbb{Z}}{p^k\mathbb{Z}}$. The Frattini subgroup in this case is $\nicefrac{p\mathbb{Z}}{p^k\mathbb{Z}}$. Hence, by item $(3)$ of Fact \ref{fact:basic_properties}, we can assume $G$ is a (finite) product of finite vector spaces $\FF_p^{r(G,p)}$.

Now, let $G$ and $G'$ be two finite abelian groups such that $\gcd(|G|,|G'|)=1$, and let $A=\{a_1,...,a_n\}\subseteq G, B=\{b_1,...,b_n\}\subseteq G'$ be ordered (multi-)subsets the same size with detection probability $\delta$. Let $C\subseteq G\times G'$ be the diagonal (multi-)subset associated with $A$ and $B$,
\[
\forall i\in[n]\ \colon \ \ c_i=(a_i,b_i)\in C.
\]
Then, $C$ has  detection probability $\delta$. This is because for every maximal subgroup $H\leq G\times G'$, $\nicefrac{G\times G'}{H}$ is a group of prime order, which is either co-prime to $|G|$ or to $|G'|$. Therefore, either  $ \{\Id\}\times G'$ or $G\times \{\Id\}$ is contained in $H$. This in turn implies that either $\pi_G(H)$ is a maximal subgroup of $G$, or that  $\pi_{G'}(H)$ is a maximal subgroup of $G'$,  where $\pi_G$ (respectively $\pi_{G'}$) is the projection to the left (respectively right)  coordinate. All in all, either  $|C\cap H|=|A\cap \pi_G(H)|\leq (1-\delta)|A|=(1-\delta)|C|$, or  $|C\cap H|=|B\cap \pi_{G'}(H)|\leq (1-\delta)|B|=(1-\delta)|C|$.

Combining the above observations, we deduce a Gilbert--Varshamov type bound for finite abeilan groups:
\begin{cor}
    Let $G$ be a finite abelian group. Modulo its Frattini subgroup, $G$ is a product of vector spaces $\FF_p^{r(G,p)}$. Then, for every $0<\delta<1-\frac{1}{\min \{p\mid r(G,p)\neq 0\}}$, we can  find a subset of $G$ of size  (approximately) $\displaystyle{\max_{p\ \textrm{prime}}}\frac{r(G,p)}{1-H_{p}(\delta)}$ and detection probability $\delta$.
Since the rank of $G$  is $k=\max(r(G,p))$, and  for every $0<\delta<\nicefrac{1}{2}$ we have $H_2(\delta)\geq H_p(\delta)$, $G$ has a subset of size (approximately) $\frac{k}{1-H_2(\delta)}$ with detection probability $\delta$.
\end{cor}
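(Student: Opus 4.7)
The plan is to assemble three ingredients that are all developed earlier in the excerpt: (i) the Frattini reduction (Fact~\ref{fact:basic_properties}(3)), (ii) the classical Gilbert--Varshamov lower bound on $p$-ary linear codes applied separately to each $p$-primary component, and (iii) the co-prime diagonal construction described immediately above the statement.

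First I would apply Fact~\ref{fact:basic_properties}(3) to replace $G$ by its Frattini quotient without affecting the detection probability. As noted in the paragraph introducing finite abelian groups, this reduces the problem to $G \cong \prod_{p} \FF_p^{r(G,p)}$, where the product ranges over the finitely many primes with $r(G,p) \neq 0$. Fix $\delta$ in the stated range. For each such prime $p$, the Gilbert--Varshamov bound yields a linear $[n_p, r(G,p), \delta n_p]_p$-code whenever $n_p(1 - H_p(\delta)) \geq r(G,p)$, i.e.\ for any $n_p$ of size approximately $r(G,p)/(1-H_p(\delta))$. The hypothesis $\delta < 1 - 1/p_{\min}$, where $p_{\min} = \min\{p \mid r(G,p)\neq 0\}$, is exactly what ensures $H_p(\delta) < 1$ for every relevant prime, so that the GV estimate is non-vacuous. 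Translating the code back to a test subset via the correspondence established in Section~\ref{sec:ECC} (take the columns of a generator matrix) gives a multi-subset $A_p \subseteq \FF_p^{r(G,p)}$ of that size with $\delta(A_p; \FF_p^{r(G,p)}) \geq \delta$.

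Next I would equalize the sizes. Set $N := \max_p n_p = \max_p r(G,p)/(1 - H_p(\delta))$, and pad each $A_p$ up to size exactly $N$ by duplicating elements; this preserves $\delta(A_p;\FF_p^{r(G,p)}) \geq \delta$, since multiplying both $|A_p \cap H|$ and $|A_p|$ by the same factor leaves their ratio unchanged. Ordering the padded multi-sets as $A_p = (a^{(p)}_1,\ldots,a^{(p)}_N)$, I form the diagonal multi-subset
\[
A = \bigl\{\,(a^{(p)}_i)_p \,:\, i \in [N]\,\bigr\} \;\subseteq\; \prod_{p} \FF_p^{r(G,p)}.
\]
Because the orders $|\FF_p^{r(G,p)}|$ are pairwise co-prime, the diagonal argument given just above the corollary (which iterates verbatim from two factors to any finite number) gives $\delta(A;G) \geq \delta$ and $|A|=N$, proving the first estimate.

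For the cleaner bound in terms of $k = \mathrm{rank}(G)$, I would use two elementary monotonicities: $r(G,p) \leq k$ for every prime $p$, and $H_p(\delta) \leq H_2(\delta)$ for every prime $p$ whenever $0 < \delta < \nicefrac{1}{2}$. Combining these gives $r(G,p)/(1-H_p(\delta)) \leq k/(1-H_2(\delta))$ for every $p$, hence $N \leq k/(1-H_2(\delta))$, which is the second claim. I do not anticipate a serious obstacle: the statement is an assembly of facts already in hand, and the only points that require a sentence of care are (a) verifying that duplication preserves $\delta(A_p;\cdot)$, and (b) observing that the pairwise-coprime diagonal trick iterates to arbitrarily many factors because any maximal subgroup $H$ of a finite product with pairwise co-prime factors contains all but one of the factors.
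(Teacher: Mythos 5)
Your proposal is correct and follows essentially the same route the paper intends, which presents this corollary simply as ``combining the above observations'' (Frattini reduction via Fact~\ref{fact:basic_properties}(3), the Gilbert--Varshamov bound for each $p$-primary component, and the co-prime diagonal construction) without a separate written-out proof. One small refinement worth noting: padding $A_p$ from size $n_p$ to size $N$ by ``duplicating elements'' leaves the ratio $|A_p\cap H|/|A_p|$ unchanged only when $n_p$ divides $N$; it is cleaner to apply the GV bound directly at the common length $N=\max_q\lceil r(G,q)/(1-H_q(\delta))\rceil$ for \emph{every} $p$ (the bound is non-vacuous for any $n\ge r(G,p)/(1-H_p(\delta))$), which sidesteps padding altogether and matches the paper's assumption that the diagonal is taken over equally sized multi-sets.
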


\section{Universal codes}\label{sec:non-explicit_solution}

As mentioned in Remark \ref{rem:one_trick} in  the Introduction, the following observation is repeatedly used in this section:
\begin{obs}\label{obs:one_trick}
    Let $B=\{x_1,...,x_t\}$ be a subset of $G$, and let $w=\prod_{j=1}^\ell x_{i_j}^{\eps_j}$ be a word in $B$, namely $i_j\in [t]$ and $\eps_{j}\in \{\pm 1\}$ for every $1\leq j\leq \ell$. Let $H$ be a subgroup of $G$. Assume there is an $1\le r\leq \ell$ such that $x_{i_r}\notin H$ while $x_{i_j}\in H$ for every $j\neq r$. Then $w\notin H$.
\end{obs}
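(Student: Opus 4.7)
The plan is to prove the contrapositive by isolating the unique ``bad'' letter $x_{i_r}^{\eps_r}$ using the subgroup closure properties of $H$. Suppose for contradiction that $w\in H$. I would write
\[
w = \left(\prod_{j=1}^{r-1} x_{i_j}^{\eps_j}\right)\cdot x_{i_r}^{\eps_r} \cdot \left(\prod_{j=r+1}^{\ell} x_{i_j}^{\eps_j}\right),
\]
and observe that both the prefix $P=\prod_{j=1}^{r-1} x_{i_j}^{\eps_j}$ and the suffix $Q=\prod_{j=r+1}^{\ell} x_{i_j}^{\eps_j}$ lie in $H$, since each factor $x_{i_j}^{\eps_j}$ with $j\neq r$ is an integer power of an element of $H$, and $H$ is closed under products and inverses.

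Solving for the middle factor gives $x_{i_r}^{\eps_r} = P^{-1} w Q^{-1}$. Under the assumption $w\in H$, the right hand side is a product of three elements of $H$, hence lies in $H$. Therefore $x_{i_r}^{\eps_r}\in H$, and since $H$ is closed under inverses, $x_{i_r}\in H$, contradicting the hypothesis that $x_{i_r}\notin H$.

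There is really no obstacle here; the only thing to be careful about is to use that $H$ is a \emph{subgroup} (not just a subset), so that inverses and products stay inside. The statement would fail for arbitrary subsets $H$, which is why the hypothesis that $H\leq G$ is essential. The whole argument is a one-line manipulation once one writes $w$ as prefix times middle letter times suffix.
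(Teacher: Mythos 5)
Your proof is correct and is exactly the argument the paper implicitly relies on; the paper states the observation without a formal proof and invokes the same prefix--letter--suffix decomposition elsewhere (for instance, in the proof of Theorem~\ref{thm:exp_set}, where the special case $Q=1$ is phrased via conjugation: $x_{S'}x_S^{-1}=P x_i P^{-1}$ with $P\in H$, which is the same manipulation). Your version, solving $x_{i_r}^{\eps_r}=P^{-1}wQ^{-1}$, is the clean general form of that one-line argument, and your remark that subgroup closure is the essential hypothesis is exactly the point.
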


\begin{rem}\label{rem:intersection_of_tuple_and_set}
    We often want to think of finite multi-sets $A$ of $G$ both as sets (which allows us to take intersections of them with other subsets) and with a certain order (which makes them into tuples). This creates situations where we use notations as $\vec g\cap H$ for a tuple of elements from $G$ and a subgroup $H$. By this we mean that we forget about the order of $\vec g$, which leaves us with a finite multi-set  $A$ of elements in $G$, and then the intersection is the multi-set of all elements  from $A$ that are also in $H$.
\end{rem}

\subsection{An exponential relaxation.}
 At first glance, it may not  be clear that there are finite sets of $\cF_k$ with detection probability bounded  by  $\delta>0$ which is independent of $k$. It turns out that sets of size exponential in $k$ suffice.
 
\begin{thm}\label{thm:exp_set}
There is a set $A\subseteq \cF_k$ of size $|A|=2^{k},$ such that
$\delta(A;\cF_k)\ge\frac{1}{2}$.
\end{thm}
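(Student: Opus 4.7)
The plan is to mimic the Hadamard code: I would index $A$ by $\{0,1\}^k$ and, for each $y\in\{0,1\}^k$, define
$w_y = x_1^{y_1} x_2^{y_2} \cdots x_k^{y_k}$,
with the convention that $x_i^0$ is the empty letter. These $2^k$ words are pairwise distinct reduced words in $\cF_k$ (no cancellation is possible between the non-trivial factors), so $A = \{w_y\}_{y\in\{0,1\}^k}$ would automatically have size $2^k$ and word length $L(A)=k$.

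To bound $\delta(A;\cF_k)$, I would fix an arbitrary proper subgroup $H \lneq \cF_k$, let $T = \{i \in [k] : x_i \notin H\}$, and observe that $H \lneq \cF_k$ forces $T \neq \emptyset$ (otherwise every generator would already lie in $H$, making $H = \cF_k$). The key move is to set $i_0 = \min T$: by minimality, every $x_j$ with $j < i_0$ automatically lies in $H$.

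The main step is a pairing argument. I would pair each $y$ with $y' = y \oplus e_{i_0}$ and show that \emph{at most one} of $w_y, w_{y'}$ is in $H$. Writing $u = x_1^{y_1} \cdots x_{i_0-1}^{y_{i_0-1}}$ and $v = x_{i_0+1}^{y_{i_0+1}} \cdots x_k^{y_k}$, and assuming (within each pair) that $y_{i_0}=0$, we have $w_y = uv$ and $w_{y'} = u\, x_{i_0}\, v$. By the minimality of $i_0$, every letter of $u$ is in $H$, so $u \in H$. If both $w_y$ and $w_{y'}$ were in $H$, then $w_{y'}w_y^{-1} = u x_{i_0} u^{-1}$ would be in $H$, forcing $x_{i_0} \in H$ and contradicting $i_0 \in T$. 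This is precisely Observation \ref{obs:one_trick} applied to the word $u x_{i_0} u^{-1}$, whose only letter outside $H$ is the middle $x_{i_0}$. Summing over the $2^{k-1}$ pairs yields $|A \cap H| \leq 2^{k-1}$, hence $\delta(A;\cF_k) \geq \nicefrac{1}{2}$.

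The only subtlety, and the only place the argument could go wrong, is the choice $i_0 = \min T$: for any other element of $T$, the conjugating prefix $u$ might itself contain letters outside $H$, and the final cancellation that extracts $x_{i_0}$ from $u x_{i_0} u^{-1} \in H$ would collapse. Everything else — the distinctness of the $w_y$, the lift to an arbitrary rank-$k$ group $G$ via Fact \ref{fact:basic_properties}(2), and the length bound $L(A) = k$ — is essentially immediate from the construction.
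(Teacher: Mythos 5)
Your proof is correct and is essentially the same as the paper's: the same Hadamard-style construction $w_y = x_1^{y_1}\cdots x_k^{y_k}$, the same choice of the \emph{minimal} index $i_0$ outside $H$, and the same perfect-matching/pairing argument showing $w_{y'}w_y^{-1} = u x_{i_0} u^{-1} \notin H$ since the prefix $u$ lies in $H$. Nothing to add — the observation you flag about why minimality of $i_0$ matters is exactly the point the paper also emphasizes.
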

\begin{rem}
    The following construction is a non-commutative version of the Hadamard code. Namely, if you run the sets constructed in the following proof through the process described in Section \ref{sec:proper_subgps_and_ECCs}, you get  (classical) codes; choosing  $p=2$ will result  with the Hadamard code.
\end{rem}

\begin{proof}[Proof of Theorem \ref{thm:exp_set}]
Let $B=\{x_{1},...,x_{k}\}$ be a basis for $\mathcal{F}_{k}$.
For every subset $S\subseteq B$, let $s$ be its indicator
function, and let 
\begin{equation}\label{eq:notation_of_x_S}
    x_{S}=x_{1}^{s(x_{1})}\cdot x_{2}^{s(x_{2})}\cdot...\cdot x_{k}^{s(x_{k})}.
\end{equation}
Namely, for every subset $S$ of $B$ we take $x_{S}$ to be the product
of the elements in $S$ according to the order defined by their index.
Now, let $A=\left\{ x_{S}\right\} _{S\subseteq B}.$ Clearly $|A|=2^{k}.$
Let $H$ be a proper subgroup of $\mathcal{F}_{k}$. Let $x_{i}$
be the generator with the smallest index not belonging to $H$, namely
$x_{i}\notin H$ but $x_{j}\in H$ for every $j<i$. For every subset $S$
of $B$ not containing $x_{i},$  let $S'=S\cup\{x_{i}\}$.
Then, at most one of $x_{S}$ and $x_{S'}$ can be in $H$. This is
because $x_{S'}x_{S}^{-1}$ is a conjugate of $x_{i}\notin H$ by
an element of $H$ --- the product $x_{1}^{s(x_{1})}\cdot...\cdot x_{i-1}^{s(x_{i-1})}$
is in $H$ regardless of what $S$ is --- and in particular cannot be
in $H$ (this is a simple version of Observation \ref{obs:one_trick}). Since $S\leftrightarrow S'$ is a perfect matching of the subsets of $B$, we deduce that $\frac{|A\cap H|}{2^k}\leq \frac{1}{2}$, and thus $\delta(A;\cF_k)\geq \frac{1}{2}$.
\end{proof}

\begin{rem} Avi Wigderson  has mentioned to us that  the technique used in Theorem \ref{thm:exp_set} --- namely, ordering the generators and taking subwords in this order --- was utilized in various works, and specifically in the following two influential papers by Babai--Szemer{\'e}di \cite{babai1984complexity} and Alon--Roichman \cite{alon1994random}.
\end{rem}

\subsection{A randomized construction}\label{sec:randomized_construction}
In the theory of classical error correcting codes, one of the first observations is that random codes are good\footnote{This was mentioned both in the Introduction as Shannon's theorem, as well as  in Section \ref{sec:ECC} as the Gilbert--Varshamov \textbf{lower} bound.}. We spell out the main proof ideas of this fact for the case of $\FF_2$:  If we sample a vector in $\FF_2^k$ uniformly at random, it has a probability of $\frac{1}{2}$ to be out of a given co-dimension $1$ subspace $H\subseteq \FF_2^k$. Hence, by standard concentration of measure techniques (see Lemma \ref{lem:Chernoff_bound}), if we sample $n$ vectors from $\FF_2^k$, the probability that less than $\nicefrac{1}{4}$ of them are not in $H$ decreases exponentially in $n$. Since there are only  $2^k-1$ different co-dimension $1$ subspaces, one can apply a union bound to deduce that a collection of $n=O(k)$ random vectors will provide an encoding map $\enc\colon \FF_2^k\to \FF_2^n$, as in Section \ref{sec:ECC}, whose image is a  code with normalized distance of at least $\nicefrac{1}{4}$ almost surely. 

One can hope to mimic the above argument by sampling $O(k)$ elements of the set $\widetilde B=\{x_S\}_{S\subseteq B}$ from  Theorem \ref{thm:exp_set} to produce the required universal codes a la Definition \ref{def:G_codes}. The issue with this approach is in the last step of the analysis: While $\FF_2^k$ has $2^k-1$ maximal subgroups, $\cF_k$ has infinitely many. We overcome this problem by clustering subgroups of $\cF_k$ into finitely many bins, actually, $2^k-1$ many bins. This clustering of subgroups is according to their \textbf{syndrome} in the basis of the free group. The resulting clusters are amenable to a  probabilistic argument, though not with respect to the uniform distribution over $\widetilde B$, as is explained soon.

Let us spell out the above plan with more details, beginning with our clustering mechanism. For every subgroup $H$ and set $B=\{x_1,...,x_k\}\subset G$, the \emph{syndrome} of $H$ in $B$ is
\begin{equation}\label{eq:defn_syndrome}
Synd(H;B)=B-H=\{x_i\in B\mid x_i\notin H\}.    
\end{equation}
The syndrome with respect to the basis $B$ of $\cF_k$ distributes its infinitely many subgroups  into finitely many bins: For every $C\subseteq B$, let 
\begin{equation}\label{eq:defn_Sigma_C}
    \Sigma_C=\bigcup_{Synd(H';B)=C}{H'}.
\end{equation}
Now, for every $H\lneq \cF_k$, we have $Synd(H;B)\neq \emptyset$, and thus every proper subgroup is in some $\Sigma_C$ for $\emptyset \neq C\subseteq B$. There are only  $2^k-1$ such non-trivial syndromes, and thus we are back to familiar grounds --- if we are able to find an $A$ such that for every $\emptyset\neq  C\subseteq B$, a constant fraction of $A$ avoids $\Sigma_C$, then we are done. 

To that end, we need the following observation: As in \eqref{eq:notation_of_x_S}, for every subset $S\subseteq B$ with characteristic function $s$,  let $x_S=\prod x_i^{s(x_i)}\in \widetilde B$. Note that 
\begin{equation}\label{eq:syndrome_implies_notin}
    |S\cap Synd(H;B)|=1 \implies x_S\notin H,
\end{equation} 
which is another special case of  Observation \ref{obs:one_trick}.
Therefore, one seeks a sampling procedure on subsets $S$ of $B$ with a high chance that the resulting subset intersects a specific syndrome $C$ exactly  \textbf{once}, and thus $x_S\notin \Sigma_C$. This leads us to the final key observation:
    Let $H$ be a proper subgroup with syndrome $C$, and let $\sigma=|C|>0$. Let $0<p<1$. If we sample $S\subseteq B$ such that each $x_i$ is included in $S$ with probability $p$ independently of the others, then
    \begin{equation} \label{eq:syndrome_intersects_once}
        \Pro[|S\cap C|=1]= \sigma p(1-p)^{\sigma-1}.
    \end{equation}
    In particular, if $p= \frac{1}{\sigma}$, then $\sigma p(1-p)^{\sigma-1}\geq e^{-1}$. This calculation is actually quite robust, namely, even for $\frac{1}{2\sigma}\leq p\leq \frac{2}{\sigma}$ we get a constant lower bound on $\Pro[|S\cap C|=1]$. Therefore, we sample in the following way: 
    For every $\ell\leq \log k$, we sample $O(k)$ many subsets $S\subseteq B$ of size $2^\ell$ uniformly and independently. That way, for every $\emptyset\neq C\subseteq B$ there exists an $\ell$ such that a constant fraction of the sampled sets $S$ of size $2^\ell$ that were  sampled satisfy $|S\cap C|=1$. Thus, the detection probability of this sampled set is $\Omega(\nicefrac{1}{\log k})$. This is the content of Theorem \ref{thm:first_iteration}.
But the goal is to have a constant detection probability. To resolve that, in Proposition \ref{prop:amplification_to_const}, we provide an amplification trick that replaces  the guaranteed $A$ with a not much larger $A'$, with the benefit that $\delta(A';\cF_k)$ is $\Omega(1)$. This amplification trick is analogous to composing the random code defined in Theorem \ref{thm:first_iteration} with the Hadamard code of Theorem \ref{thm:exp_set}. In Section \ref{sec:iterative}, we show that more iterations of code  compositions lead to somewhat better parameters, from which  we deduce Theorem \ref{thm:intro_poly_sized_set}.
\begin{rem}
    Our sampling scheme may resonate with a reader  which is familiar with the proof of Bourgain's embedding theorem \cite{bourgain1985lipschitz}.
\end{rem}
    
    To be able to leverage the above analysis into a construction, we need to  recall the Chernoff \cite{chernoff1952measure} (Hoeffding \cite{Hoeffding}) bound.
    
\begin{lem}[Chernoff bound, cf. Theorem 3.3 in \cite{Chernoff_ODonnell} or Theorems 4.4 and 4.5 in \cite{mitzenmacher2017probability}]\label{lem:Chernoff_bound} Let $n$ be a positive integer.
    Let $\{X_i\}_{i=1}^n$ be a collection of  independent and identically distributed random variables, where $0\leq X_i\leq 1$ and $\Ex[X_i]=\mu$.  Then, for every $0\leq \eps\leq 1$, we have 
    \begin{align}
        &\Pro\left[\sum X_i> (1+\eps) n\mu\right]\leq e^{-\eps^2n\mu/3},\\
        &\Pro\left[\left|\sum X_i-n\mu\right|> \eps n\mu\right]\leq 2e^{-\eps^2n\mu/3}.
    \end{align}
\end{lem}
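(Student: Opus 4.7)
My plan is to prove the Chernoff bound by the standard \emph{exponential moments / Markov} technique, sometimes called the Chernoff--Cram\'er method. Let $S=\sum_{i=1}^n X_i$ and fix a parameter $t>0$ to be chosen later. Since $e^{tx}$ is increasing, Markov's inequality gives
\[
\Pro[S>(1+\eps)n\mu]=\Pro[e^{tS}>e^{t(1+\eps)n\mu}]\leq \frac{\Ex[e^{tS}]}{e^{t(1+\eps)n\mu}},
\]
and since the $X_i$ are independent, $\Ex[e^{tS}]=\prod_{i=1}^n\Ex[e^{tX_i}]=\bigl(\Ex[e^{tX_1}]\bigr)^n$. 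The first non-trivial step is to bound $\Ex[e^{tX_1}]$ using the fact that $X_1\in[0,1]$: by convexity of $x\mapsto e^{tx}$ on $[0,1]$ we have $e^{tx}\leq 1+x(e^t-1)$, so
\[
\Ex[e^{tX_1}]\leq 1+\mu(e^t-1)\leq \exp\bigl(\mu(e^t-1)\bigr),
\]
using $1+y\leq e^y$. Consequently, $\Ex[e^{tS}]\leq \exp\bigl(n\mu(e^t-1)\bigr)$.

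Plugging this back in and optimizing over $t$ by taking $t=\ln(1+\eps)$, one obtains the classical multiplicative Chernoff bound
\[
\Pro[S>(1+\eps)n\mu]\leq \left(\frac{e^{\eps}}{(1+\eps)^{1+\eps}}\right)^{n\mu}.
\]
The step I expect to need the most care is the final analytic reduction of the right-hand side to the cleaner form $e^{-\eps^2 n\mu/3}$. To do this I would show that for $0\leq \eps\leq 1$,
\[
f(\eps):=(1+\eps)\ln(1+\eps)-\eps \;\geq\; \tfrac{\eps^2}{3},
\]
which is a one-variable calculus exercise: $f(0)=0$, and comparing derivatives $f'(\eps)=\ln(1+\eps)$ with $(2\eps/3)$ on $[0,1]$ using the Taylor expansion $\ln(1+\eps)\geq \eps-\eps^2/2$ gives the bound after a short manipulation. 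Exponentiating $-n\mu f(\eps)\leq -\eps^2n\mu/3$ yields the first inequality in the lemma.

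For the two-sided inequality, I would repeat the same program in the lower tail direction with $t<0$ (or equivalently apply Markov to $e^{-tS}$), obtaining $\Pro[S<(1-\eps)n\mu]\leq \bigl(e^{-\eps}/(1-\eps)^{1-\eps}\bigr)^{n\mu}\leq e^{-\eps^2 n\mu/2}\leq e^{-\eps^2 n\mu/3}$, where the second inequality uses the analogous one-variable estimate $(1-\eps)\ln(1-\eps)+\eps\geq \eps^2/2$ on $[0,1]$. Combining the two tails via a union bound loses only a factor of $2$, which is exactly the factor appearing in the second displayed inequality of the statement. The main (minor) obstacle throughout is the analytic bookkeeping needed to arrive at the clean exponent $\eps^2/3$; everything else is the completely standard moment-generating-function argument.
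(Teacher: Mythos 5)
The paper does not give a proof of this lemma; it is cited from standard references (Theorem~3.3 of the O'Donnell lecture notes and Theorems~4.4--4.5 of Mitzenmacher--Upfal). Your proposal is the standard exponential-moment (Chernoff--Cram\'er) argument that those references use, so the approach is the intended one and the overall structure is correct: Markov on $e^{tS}$, the convexity bound $e^{tx}\le 1+x(e^t-1)$ valid for $x\in[0,1]$, the product rule via independence, the choice $t=\ln(1+\eps)$, and the final union bound producing the factor $2$ are all right.

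There is one small analytic gap in the last reduction. You justify $(1+\eps)\ln(1+\eps)-\eps\ge\eps^2/3$ on $[0,1]$ by comparing derivatives, i.e.\ showing $\ln(1+\eps)\ge 2\eps/3$, and then invoke the Taylor lower bound $\ln(1+\eps)\ge\eps-\eps^2/2$. But $\eps-\eps^2/2\ge 2\eps/3$ holds only for $\eps\le 2/3$, so as written the chain does not cover $(2/3,1]$. The inequality is nevertheless true on all of $[0,1]$; the cleanest patch is to argue directly that $\psi(\eps)=\ln(1+\eps)-\tfrac{2}{3}\eps$ has $\psi(0)=0$ and $\psi'(\eps)=\tfrac{1}{1+\eps}-\tfrac{2}{3}$, so $\psi$ increases on $[0,\tfrac12]$, decreases on $[\tfrac12,1]$, and $\psi(1)=\ln 2-\tfrac{2}{3}>0$; hence $\psi\ge 0$ on $[0,1]$, and integrating from $0$ gives the required exponent. (Equivalently, the sharper standard estimate $(1+\eps)\ln(1+\eps)-\eps\ge\eps^2/(2+2\eps/3)$ is $\ge\eps^2/3$ exactly when $\eps\le\tfrac32$.) The lower-tail step using $(1-\eps)\ln(1-\eps)+\eps\ge\eps^2/2$ is fine as stated, since $-\ln(1-\eps)\ge\eps$ on $[0,1)$.
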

\begin{thm}\label{thm:first_iteration}
        There is a  set $A$ in $\cF_k$ of size 
         $|A|=432k\log k$, such that for every subset $\emptyset \neq C\subseteq B$,  
            \begin{equation}\label{eq:syndrome_detection_prob}
                \frac{|A\cap \Sigma_C|}{|A|}\leq 1-\frac{1}{12\log k}.
            \end{equation}
    \end{thm}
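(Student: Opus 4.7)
The plan is to follow the probabilistic strategy outlined in the paragraph preceding the theorem. For each scale $\ell\in\{0,1,\ldots,\lfloor\log_2 k\rfloor\}$, I would draw $N=432k$ subsets $S\subseteq B$ independently and uniformly at random from the collection of $2^\ell$-element subsets of $B$, encode each as $x_S\in\cF_k$ via \eqref{eq:notation_of_x_S}, and let $A_\ell$ be the resulting multi-set. Setting $A=\bigsqcup_\ell A_\ell$ yields $|A|\leq 432k(1+\log_2 k)\leq 432k\log k$ for $k\geq 2$.

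For a fixed non-empty syndrome $C\subseteq B$ of size $\sigma$, I would select the matching scale $\ell^\star(\sigma)$ to be the largest integer with $2^{\ell^\star}\leq k/\sigma$, so that $p:=2^{\ell^\star}/k$ lies in the robust window $[1/(2\sigma),1/\sigma]$ identified after \eqref{eq:syndrome_intersects_once}. A direct computation with the hypergeometric distribution (or a comparison with the Bernoulli quantity $\sigma p(1-p)^{\sigma-1}$) shows
\[
p_C \,:=\, \Pro_{|S|=2^{\ell^\star}}\!\bigl[\,|S\cap C|=1\,\bigr] \,\geq\, c_0
\]
for an absolute constant $c_0$ independent of $k$ and $\sigma$. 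By the one-letter principle \eqref{eq:syndrome_implies_notin}, every sample $S$ with $|S\cap C|=1$ yields an $x_S$ lying outside $\Sigma_C$.

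Consequently $|A_{\ell^\star}\setminus \Sigma_C|$ is a sum of $N$ i.i.d.\ Bernoulli$(p_C)$ variables with mean at least $c_0 N$; Lemma \ref{lem:Chernoff_bound} with $\eps=1/2$ gives
\[
\Pro\!\bigl[\,|A_{\ell^\star}\setminus\Sigma_C|<c_0 N/2\,\bigr] \,\leq\, 2 e^{-c_0 N/12}.
\]
Taking $c_0\geq 1/6$ makes the exponent at least $6k$, which comfortably dominates the $k\ln 2$ loss from a union bound over the $2^k-1$ non-empty $C$. Hence with strictly positive probability the sampling is simultaneously Chernoff-successful for every syndrome, and for such a realization
\[
\frac{|A\setminus\Sigma_C|}{|A|} \,\geq\, \frac{c_0 N/2}{432k\log k} \,\geq\, \frac{1}{12\log k},
\]
which rearranges to the stated bound on $|A\cap\Sigma_C|/|A|$.

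The main obstacle is pinning down a concrete constant $c_0\geq 1/6$ for the hypergeometric estimate, so that the arithmetic actually lands on $\tfrac{1}{12\log k}$ and not some weaker denominator. The cleanest path is to isolate this in a short technical lemma: for $\sigma\leq k/2$ one shows that the hypergeometric probability agrees with $\sigma p(1-p)^{\sigma-1}$ up to a factor $1+O(\sigma/k)$, and an elementary calculus argument bounds the latter below by roughly $\tfrac{1}{2}e^{-1/2}>1/4$ on the interval $p\in[1/(2\sigma),1/\sigma]$; for $\sigma>k/2$ (and the degenerate extremes $\sigma\in\{1,k\}$, where $\ell^\star$ is forced to the top or bottom of the allowed range) the probability can be computed by direct inspection and is at least as large. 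This uniform lower bound is the only genuinely non-trivial ingredient; everything else is Chernoff bookkeeping plus a union bound.
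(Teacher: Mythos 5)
Your strategy is the same as the paper's: multi-scale random sampling, one matching scale per syndrome size $\sigma$, a uniform lower bound on the probability of a sampled set hitting $C$ exactly once (which forces $x_S\notin H$), then Chernoff plus a union bound over the $2^k-1$ syndromes. The arithmetic you set up also lands on exactly $\frac{1}{12\log k}$ given your target constant $c_0\geq 1/6$.

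The one genuine deviation is the \emph{sampling model}, and it is precisely the source of the gap you flag at the end. You sample $S$ uniformly among $2^\ell$-element subsets of $B$, so that $\Pro[|S\cap C|=1]$ is a hypergeometric probability you still need to bound below by an absolute constant across all $(\sigma,\ell^\star)$. The paper instead samples \emph{Bernoulli-style}: each $x_i$ is placed in $S(\ell,j)$ independently with probability $1/2^\ell$. This makes the single-hit probability a clean product,
\[
\Pro[|S\cap C|=1]=\frac{\sigma}{2^{\ell}}\left(1-\frac{1}{2^{\ell}}\right)^{\sigma-1},
\]
which with $\ell=\lceil\log\sigma\rceil$ is immediately $\geq \frac{1}{2e}$, no hypergeometric lemma required. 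The paper then runs Chernoff on the indicators of $x_{S(\ell,j)}\in\Sigma_C$, choosing $\eps$ with $(1+\eps)\mu=11/12$ so that $\eps\mu>1/12$, getting failure probability $e^{-k}$ per syndrome and closing with the union bound. So: your proof would work once the hypergeometric estimate is nailed down, but switching to i.i.d.\ Bernoulli inclusion (as the paper does) eliminates that lemma entirely. Two small bookkeeping remarks: your scale range $\ell\in\{0,\ldots,\lfloor\log_2 k\rfloor\}$ gives $\lfloor\log_2 k\rfloor+1$ scales, so $|A|\leq 432k(1+\log_2 k)$ is \emph{not} $\leq 432k\log k$; the paper uses $\ell\in\{1,\ldots,\log k\}$ and absorbs $\sigma=1$ into $\ell=1$ (where the single-hit probability is $1/2$), which you should mirror. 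And your heuristic ``agrees with $\sigma p(1-p)^{\sigma-1}$ up to $1+O(\sigma/k)$'' is in the wrong direction as a shortcut — you need a uniform lower bound, not an asymptotic equivalence — so the cleanest fix really is to change the sampling distribution rather than prove the hypergeometric comparison.
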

\begin{rem}\ 
    \begin{enumerate}
        \item Since every $H\lneq \cF_k$ is contained in $\Sigma_{Synd(H;B)}$, \eqref{eq:syndrome_detection_prob}  implies $\delta(A;\cF_k)\geq \frac{1}{12\log k}$.
        \item The parameter $\delta=\frac{1}{12\log k}$ is actually not that bad for applications. By allowing the tester to query the black box indicator function $h$ at ${\rm poly}(\log k)$ many positions instead of $1$, which keeps the query complexity \emph{efficient}, one can leverage this to a constant detection probability.  But, as we prove in Proposition \ref{prop:amplification_to_const} and Corollary \ref{cor:resolving_main_1}, there are constructions that resolve the problem without increasing the query complexity.
        \item As $A$ is a subset of $\widetilde B$, its length is at most $k$.
    \end{enumerate}
\end{rem}
\begin{proof}[Proof of Theorem \ref{thm:first_iteration}]
The proof idea is as follows: We choose for every $\ell\leq \log k$ a collection of $O(k)$ random words from $\widetilde B=\{x_S\}_{S\subseteq B}$ of length $2^\ell$. Since each such word has a constant chance of being out of $\Sigma_C$ for every $2^\ell \leq |C|\leq  2^{\ell+1}$, we apply Chernoff and a union bounds to deduce what is needed.
\\

    For every $1\leq \ell \leq \log k$, sample subsets $\{S(\ell,j)\}_{j=1}^{432k}$  of $B$ independently as follows: Each $x_i\in B$ belongs to $S(\ell,j)$ with probability $\frac{1}{2^\ell}$, independently of the other elements of $B$. Recall the notation  from \eqref{eq:notation_of_x_S} and let $A_\ell=\{x_{S(\ell,j)}\mid  1\leq j\leq 432k\}$ and $A=\bigcup A_\ell$. Let 
    $\emptyset\neq C \subseteq B, \sigma=|C|$ and $\ell=\lceil\log(\sigma)\rceil$.
    Then, by using   \eqref{eq:defn_Sigma_C},\eqref{eq:syndrome_implies_notin} and \eqref{eq:syndrome_intersects_once}, for every $1\leq j\leq 432k$, we have
    \[
    \begin{split}
        \Pro[x_{S(\ell,j)}\notin \Sigma_C]
        &\geq \Pro[|S(\ell,j)\cap C|=1]\\
        &=\underbrace{\frac{\sigma}{2^{\ell}}}_{\geq \nicefrac{1}{2}}\left(\underbrace{1-\frac{1}{2^{\ell}}}_{\geq 1-\frac{1}{\sigma}}\right)^{\sigma-1}\\
        &\geq \nicefrac{1}{2e}.
    \end{split}
    \]
    Let $X_j$ be the characteristic random variable of the event $x_{S(\ell ,j)}\in \Sigma_C$. Then, $\mu =\Ex[X_j]\leq 1-\frac{1}{2e}<\frac{5}{6}$, and also $|A_{\ell}\cap \Sigma_C|=\sum X_j$. Now, if we choose $\eps>0$ such that  $(1+\eps)\mu=\nicefrac{11}{12}$, then $$\eps> \eps\mu> \nicefrac{11}{12}-\nicefrac{10}{12}=\nicefrac{1}{12}.$$ 
    Therefore, by applying Lemma \ref{lem:Chernoff_bound}, we can deduce that 
    \[
        \Pro\left[|A_\ell\cap\Sigma_C|\geq \frac{11}{12}\cdot 432k\right]\leq e^{-\frac{432}{3\cdot 12^2}k}=e^{-k},
    \]
    and
    \[
        \Pro\left[\exists \emptyset \neq C\subseteq B\colon |A_\ell\cap\Sigma_C|\geq \frac{11}{12}\cdot 432k\right]<2^k\cdot e^{-k}<1.
    \]
    Hence, there is a choice of $A$ such that for every $\emptyset\neq C\subseteq B$ we have
    \[
\begin{split}
    \frac{|A\cap\Sigma_C|}{|A|}\leq 1-\frac{|A_\ell -\Sigma_C|}{432k\log k}\leq 1-\frac{1}{12\log k},
\end{split}
    \]
    and the proof is finished.
\end{proof}

The next proposition allows us to amplify the detection probability of certain constructions to a constant without enlarging the test subset $A$ too much (It will be applied on the outcome of Theorem \ref{thm:first_iteration} with parameter $\delta =\nicefrac{1}{12\log k}$).
\begin{prop}[Amplification to constant by sub-sampling]\label{prop:amplification_to_const}
    Let $A$ be a set in $\cF_k$ such that for every $\emptyset \neq C\subseteq B$, we have $|A\cap \Sigma_C|\leq (1-\delta)|A|$. Then, there is a set $A'$ satisfying $|A'|=61k\cdot 2^d$ with $\delta(A';\cF_k)\geq \frac{1}{8}$, where $d=\lceil{\nicefrac{1}{\delta}}\rceil$.
\end{prop}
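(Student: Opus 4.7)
The plan is to amplify $A$ by composing it with the Hadamard-style construction of Theorem \ref{thm:exp_set}. Set $d = \lceil 1/\delta \rceil$ and $m = 61 k$. I would sample $m$ independent and uniformly random tuples $\vec a^{(j)} = (a_1^{(j)}, \dots, a_d^{(j)}) \in A^d$ for $j \in [m]$ and define
\[
A' = \bigl\{\, w_S^{(j)} : j \in [m],\ S \subseteq [d]\, \bigr\}, \qquad w_S^{(j)} = \prod_{i \in S} a_i^{(j)},
\]
with factors ordered by increasing $i$ as in \eqref{eq:notation_of_x_S}. Treated as a multi-set, $|A'| = m \cdot 2^d = 61 k \cdot 2^d$.

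The core deterministic step is to rerun the Hadamard matching from Theorem \ref{thm:exp_set} inside each tuple. Fix a proper subgroup $H \lneq \cF_k$. If some entry of $\vec a^{(j)}$ lies outside $H$, let $r$ be the smallest such index so that $a_1^{(j)}, \dots, a_{r-1}^{(j)} \in H$; the same cancellation computation as in Theorem \ref{thm:exp_set} shows that for every $S \subseteq [d] \setminus \{r\}$, the product $w_{S \cup \{r\}}^{(j)} (w_S^{(j)})^{-1}$ is a conjugate of $a_r^{(j)}$ by an element of $H$ and therefore lies outside $H$. Consequently at most $2^{d-1}$ of the $2^d$ words $w_S^{(j)}$ belong to $H$; if instead every entry of $\vec a^{(j)}$ lies in $H$, then all $2^d$ do.

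To apply this uniformly over the infinitely many proper subgroups, I would cluster them by syndrome as in \eqref{eq:defn_Sigma_C}. For each nonempty $C \subseteq B$, the hypothesis on $A$ gives $\Pro[a \notin \Sigma_C] \ge \delta$ for $a$ uniform on $A$, and independence of the $d$ coordinates yields $\Pro[\text{every } a_i^{(j)} \in \Sigma_C] \le (1-\delta)^d \le e^{-1}$ since $d \ge 1/\delta$. Letting $T_C$ count the tuples having at least one entry outside $\Sigma_C$, we have $\Ex[T_C] \ge (1 - e^{-1}) m$. Applying Lemma \ref{lem:Chernoff_bound} to the complementary indicators, $\Pro[T_C < m/4]$ is exponentially small in $m$; with $m = 61 k$ the exponent comfortably exceeds $k \ln 2$, so a union bound over the $2^k - 1$ nonempty syndromes secures, with positive probability, a deterministic realization in which $T_C \ge m/4$ for every $C$ simultaneously.

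For such a realization and for any proper $H \lneq \cF_k$, set $C = Synd(H;B)$; then $H \subseteq \Sigma_C$, so each of the $T_C$ ``good'' tuples also has an entry outside $H$. Combining this with the per-tuple Hadamard bound,
\[
|A' \cap H| \le 2^{d-1} T_C + 2^d (m - T_C) = 2^d m - 2^{d-1} T_C,
\]
which gives $|A' \cap H|/|A'| \le 1 - T_C/(2m) \le 7/8$, hence $\delta(A'; \cF_k) \ge 1/8$. I expect the delicate point to be the numerical calibration of $m$ against the union bound: the choice $d = \lceil 1/\delta \rceil$ is exactly what pushes each per-tuple success probability past $1 - 1/e$, which is the slack that lets the Chernoff exponent dominate the $\log 2^k$ blowup with the prescribed linear $m = 61 k$.
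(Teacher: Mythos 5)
Your proof is correct and follows essentially the same route as the paper's: sample $61k$ independent $d$-tuples from $A$, run the Hadamard construction of Theorem \ref{thm:exp_set} on each, use the per-tuple matching argument to halve the number of words in $H$ whenever some coordinate escapes $\Sigma_C$, and finish with Chernoff plus a union bound over the $2^k-1$ syndromes. The only cosmetic differences are that you keep the threshold at $T_C \geq m/4$ rather than $m(1-2/e)$ and treat the samples as ordered tuples with replacement rather than size-$d$ subsets, but both variants clear the $\nicefrac{1}{8}$ bar for the same reasons.
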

\begin{proof}
We first spell out the main ideas: If one samples a set of size $\frac{1}{\delta}$ from $A$, then there is a constant probability of it not being contained in a specific $\Sigma_C$. By repeating this $O(k)$-times, we get a collection of sets such that for \textbf{every} syndrome $C$, a constant proportion of them are not contained in $\Sigma_C$. We then replace each of these sets by the exponential construction from Theorem \ref{thm:exp_set}. Thus, every subgroup $H$ avoids half of all the sets that were not contained in $\Sigma_{Synd(H;B)}$, which is a constant amount. 
\\

    Sample a set $Y$ of size $d=\lceil\frac{1}{\delta}\rceil$ in $A$ uniformly at random. Then, for $\emptyset \neq C\subseteq B$, we have
    \[
\Pro[Y\subseteq \Sigma_C]\leq (1-\delta)^{\nicefrac{1}{\delta}}\leq \nicefrac{1}{e}.
    \]
    Replace $Y=\{a_1,...,a_d\}$ by $\widetilde Y=\{a_S=\prod a_i^{s(a_i)}\mid S\subseteq Y\}$, as was done in Theorem \ref{thm:exp_set} for the basis $B$. If $Y\not \subseteq\Sigma_C$, then by the same argument as in Theorem \ref{thm:exp_set}, for every $H\lneq \cF_k$ with $Synd(H;B)=C$, we have that $|H\cap \widetilde Y|\leq \nicefrac{|\widetilde Y|}{2}$. 
    Let us sample independently a collection of subsets $Y(j)\subseteq A$, each of size  $d=\lceil\frac{1}{\delta}\rceil$, for $1\leq j\leq 61k$. By Lemma \ref{lem:Chernoff_bound},
    \[
        \Pro\left[|\{j\in [61k]\mid Y(j)\subseteq \Sigma_C\}|\geq \frac{122k}{e}\right]\leq e^{-\nicefrac{61k}{3e^3}}<e^{-k}.
    \]
    By applying a union bound over the collection of possible syndromes $\emptyset\neq C\subseteq B$, we get
    \[
         \Pro\left[\exists \emptyset\neq C\subseteq B \colon |\{j\in [61k]\mid Y(j)\subseteq \Sigma_C\}|\geq \frac{122k}{e}\right]<2^k e^{-k}<1,
    \]
    which in turn implies that there is a collection of $61k$ subsets $Y(j)$, each of size $d$ such that every $\Sigma_C$ contains at most a $\frac{2}{e}$-fraction of them.
    By letting $A'=\bigcup \widetilde Y(j)$, we get a set of $61k\cdot 2^d$ elements which satisfies 
    \[
    \forall H\lneq \cF_k\ \colon\ \ \frac{|A'\cap H|}{|A'|}\leq 1-\frac{1}{2}(1-\frac{2}{e})\leq 1-\frac{1}{8}.
    \]
\end{proof}
\begin{cor}\label{cor:resolving_main_1}
By taking the $A$ that was guaranteed by  Theorem \ref{thm:first_iteration}, we get a set that satisfies the conditions of Proposition \ref{prop:amplification_to_const} with $\delta=\nicefrac{1}{12\log k}$. Thus, by applying \ref{prop:amplification_to_const}  on it, we get a set $A'$ of size $61k^{13}$ with detection probability  $\nicefrac{1}{8}$. 
\end{cor}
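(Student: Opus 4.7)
The plan is a direct concatenation of the two preceding results, with essentially no creative content beyond the bookkeeping. First, I would invoke Theorem \ref{thm:first_iteration} to obtain a set $A\subseteq \cF_k$ of size $432k\log k$ such that $|A\cap \Sigma_C|\leq (1-\tfrac{1}{12\log k})|A|$ for every nonempty $C\subseteq B$. This is literally the hypothesis of Proposition \ref{prop:amplification_to_const} with $\delta=\tfrac{1}{12\log k}$, so no translation between the two statements is required.

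Second, I would feed this $A$ into Proposition \ref{prop:amplification_to_const} as-is. The parameter becomes $d=\lceil 1/\delta\rceil=\lceil 12\log k\rceil$, and the conclusion of the proposition yields a set $A'$ of size $61k\cdot 2^d$ with $\delta(A';\cF_k)\geq \tfrac{1}{8}$. The remaining content is a short arithmetic check: using $2^{12\log_2 k}=k^{12}$, the size $61k\cdot 2^{\lceil 12\log k\rceil}$ is (up to the negligible effect of the ceiling, which can be absorbed into the leading constant) just $61k^{13}$, matching the stated bound.

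There is genuinely no obstacle here --- all of the substance lives in Theorem \ref{thm:first_iteration} and Proposition \ref{prop:amplification_to_const}, and the corollary is only recording the concrete numerical output of their composition. The sole point worth flagging is that $\log$ must be interpreted consistently as $\log_2$ throughout (as in the earlier statements), so that the exponent $12\log k$ in $d$ produces the factor $k^{12}$; any other base would change the exponent of $k$ in $|A'|$ by a constant factor, but not the qualitative conclusion.
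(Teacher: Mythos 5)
Your proposal is correct and follows exactly the route the paper takes: the corollary has no independent content beyond plugging the output of Theorem~\ref{thm:first_iteration} (with $\delta=\nicefrac{1}{12\log k}$) into Proposition~\ref{prop:amplification_to_const} and simplifying $61k\cdot 2^{\lceil 12\log_2 k\rceil}$ to $61k^{13}$. Your caveat about the ceiling is the right thing to flag; the paper's stated constant $61$ is itself slightly loose for the same reason.
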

\begin{rem}
   In the theory of error correcting codes, composition of codes is a standard technique. Corollary \ref{cor:resolving_main_1} can be seen as a non-abelian version of code composition.

   Note also that the resulting set in Corollary \ref{cor:resolving_main_1} is no longer a subset of $\widetilde B$. It consists of words of length at most $d=\lceil\frac{1}{\delta}\rceil\leq 12\log k$ in the words in $Y(j)$, which are themselves words of length at most $k$. Hence, the resulting $A'$ consists of words of length at most $12k\log k$ in the original generators $B$ of $\cF_k$.
\end{rem}

\subsection{Iterative composition}\label{sec:iterative}
We now suggest a more gradual amplification, which allows us to find smaller test subsets (which are not yet linear in $k$) with constant detection probability. The idea is similar to the code composition used in the proof of Proposition \ref{prop:amplification_to_const}, but instead of using a different code as the smaller one, we use the scaled down version of the same code. The benefit from this is exactly that the construction of Theorem \ref{thm:first_iteration} depends only on the syndromes, which will allow us to iterate it more than once and gain smaller sized $A$'s, while mildly worsening the detection probability $\delta$. 

To that end, we need to recall definitions, fix notations and observe some facts.
Let $B=\{x_1,...,x_k\}$ be a basis of $\cF_k$. Any word $w\in \cF_k$ can be written as  $w=\prod_{j=1}^\ell x_{\alpha(j)}^{\eps(j)}$,  where $\alpha\colon [\ell]\to [k]$ and $\eps\colon [\ell]\to \{\pm 1\}$. Thus, $w$ induces on every group $G$  a \emph{word map} $w\colon G^k\to G$ in the following way 
\begin{equation}\label{eq:defn_word_map}
w(g_1,...,g_k)=\prod_{j=1}^\ell g_{\alpha(j)}^{\eps(j)}.
\end{equation}
Therefore, an ordered multi-set $A=\{w_1,...,w_n\}\subseteq \cF_k$ (namely, a tuple of words) induces a \emph{set word map} $A\colon G^k\to G^n$ by mapping $\vec g=(g_1,...,g_k)$ to $(w_1(\vec g),...,w_n(\vec g))$. The detection probability of $A$ in $\cF_k$ translates to the following property: If $H\lneq G$ and $\vec g\in G^k$ are such that $\vec  g\cap H\neq \vec g$ (using the notation from Remark \ref{rem:intersection_of_tuple_and_set}), by which we mean that there are elements in the tuple $\vec g$ which are not in $H$, then $|A(\vec g)\cap H|\leq (1-\delta(A;\cF_k))|A(\vec g)|=(1-\delta)n$. 
Furthermore, for every $C\subseteq D\subseteq G$ we can denote by $\Sigma^D_C(G)$  the union  of all $H\leq G$ such that $Synd(H;D)=C$. Then, $\Sigma_C$ from before was $\Sigma^B_C(\cF_k)$ where $B$ is a basis of $\cF_k$. Now, if $A\subseteq \cF_k$ satisfied that $|A\cap\Sigma_C|\leq (1-\delta)|A|$ for every $\emptyset \neq C\subseteq B$, then $|A(\vec g)\cap \Sigma^{\vec g}_C(G)|\leq (1-\delta)|A(\vec g)|$ for every $\vec g\in G^k$ and $\emptyset\neq C\subseteq\ \vec g$.

Through this point of view, Theorem \ref{thm:first_iteration} defines a sequence of set word maps, one for each $k$, such that given an ordered subset of size $k$ in a group $G$, it outputs an ordered subset of size $432k\log k$ in $G$ with certain properties. Let us denote this map by $\Lambda$ --- note that $\Lambda$ first chceks what is the size of its input, interpret it as $k$, and then applies the $k^{\rm th}$ set word map defined by the theorem. 
Similarly, in Theorem \ref{thm:exp_set}, we constructed a set word map for each $k$ that takes ordered sets of size $k$ and produces an ordered set of size $2^k$. This process was denoted before by $Y\mapsto \widetilde Y$, but let us denote this map by $\Psi$ from now on --- again, $\Psi$ acts depending on the size of its input.
Thus, in Theorem \ref{thm:first_iteration}, Theorem \ref{thm:exp_set} and Proposition \ref{prop:amplification_to_const} we proved the following:\footnote{Note that we use $\log|A|$ as if it is an integer. We actually mean $\lceil \log|A|\rceil$, but omit the rounding from the notation.} 
\begin{cor}\label{cor:synopsis}
    For every ordered subset $\vec g$ of a group $G$ ---
\begin{enumerate}
    \item $|\Lambda(\vec g)|=432|\vec g |\log|\vec g |$.
    \item If $\emptyset \neq C\subseteq \vec g $, then $|\Sigma_C^{\vec g}(G) \cap \Lambda(\vec g )|\leq (1-\frac{1}{12\log|\vec g |})|\Lambda(\vec g )|$.
    \item There is a collection $\{Y(j)\}$ of $61|\vec g |$ subsets of $\Lambda(\vec g )$ of size $12\log|\vec g |$ each, satisfying
    \[
\forall \emptyset \neq C\subseteq \vec g \ \colon \ \ \Pro_{j}[Y(j)\subseteq \Sigma_C^{\vec g }(G)]\leq \nicefrac{2}{e}.
    \]
    \item $|\Psi(\vec g )|=2^{|\vec g |}$.
    \item If $Synd(H;\vec g )\neq \emptyset$, then $|H\cap \Psi(\vec g )|\leq \frac{|\Psi(\vec g )|}{2}$.
    \item Denote by $L(\vec g )=\max\{\ell(w)\mid w\in \vec g \}$ the maximal length of a word in $\vec g $ with respect to some prefixed generating set of $G$. Then, $$L(\Psi(\vec g )),L(\Lambda(\vec g ))\leq |\vec g |L(\vec g ).$$
\end{enumerate}
\end{cor}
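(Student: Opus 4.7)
The plan is to observe that every item in the Corollary is essentially a restatement of a result already proved (Theorem \ref{thm:first_iteration}, Theorem \ref{thm:exp_set}, or Proposition \ref{prop:amplification_to_const}), once we adopt the set-word-map perspective introduced just before the Corollary. So the proof is really a bookkeeping exercise plus one genuinely new observation about word length.

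First I would treat items (1), (2), (4), (5). For each $k$, the construction in Theorem \ref{thm:first_iteration} takes the ordered basis $B=(x_1,\dots,x_k)$ of $\cF_k$ and outputs a specified tuple of $432k\log k$ words in $B$; by \eqref{eq:defn_word_map} this produces a set word map $\Lambda\colon G^k\to G^{432k\log k}$ for every group $G$, whose defining words are exactly the same. Now any ordered subset $\vec g\subseteq G$ of size $k$ factors through the quotient $\cF_k\twoheadrightarrow \langle \vec g\rangle\le G$ sending $x_i\mapsto g_i$, so the claimed bound from Theorem \ref{thm:first_iteration} on intersections with $\Sigma_C$ is preserved (this is exactly Fact \ref{fact:basic_properties}(2), noting that $\Sigma_C^{\vec g}(G)$ is the image of $\Sigma_C$ under the quotient). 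This gives (1) and (2). Items (4) and (5) are the analogous translation of Theorem \ref{thm:exp_set}: the product $\Psi(\vec g)=\{g_S=\prod_{i} g_i^{s(g_i)}\mid S\subseteq \vec g\}$ has $2^{|\vec g|}$ elements, and the matching argument in the proof of Theorem \ref{thm:exp_set} — which only used the fact that $Synd(H;B)\neq \emptyset$, together with the generic fact (Observation \ref{obs:one_trick}) about one ``bad'' letter in a word — carries over verbatim with $\vec g$ in place of $B$.

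For item (3), I would inspect the proof of Proposition \ref{prop:amplification_to_const}: the random subsets $Y(j)\subseteq A$ of size $d=\lceil 1/\delta\rceil$ satisfied $\Pro[Y(j)\subseteq \Sigma_C]\le (1-\delta)^{1/\delta}\le 1/e$, so by Chernoff plus a union bound over the $2^{|\vec g|}-1$ nonempty syndromes one finds a family of $61|\vec g|$ subsets $Y(j)$ in which the empirical fraction of ``bad'' $Y(j)$'s (those contained in $\Sigma_C^{\vec g}(G)$) is at most $2/e$ for every nonempty $C\subseteq \vec g$. Plugging $\delta=1/(12\log|\vec g|)$ from item (2) gives $d=12\log|\vec g|$, which matches the size stated in (3).

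The only genuinely new piece is the word length bound (6). This is where I expect the thought to be, though it is still essentially routine. Both $\Psi$ and $\Lambda$ produce elements of the form $\prod_{i\in S} g_i^{\varepsilon_i}$ for various subsets $S\subseteq \vec g$ and signs; in particular each output word uses at most $|\vec g|$ letters from $\vec g$. If each $g_i$ has length at most $L(\vec g)$ in the prefixed generating set of $G$, then substituting gives an output word of length at most $|\vec g|\cdot L(\vec g)$, which is the claim. This completes the Corollary.
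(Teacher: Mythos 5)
Your proposal is correct and matches the paper's own (essentially implicit) proof: the paper simply introduces the Corollary with ``Thus, in Theorem \ref{thm:first_iteration}, Theorem \ref{thm:exp_set} and Proposition \ref{prop:amplification_to_const} we proved the following,'' and your bookkeeping --- translating each of those results into the set-word-map language, plus the routine length bound for item (6) --- is exactly what is intended. The only small imprecision is that $\Sigma_C^{\vec g}(G)$ is not literally the image of $\Sigma_C$ under the evaluation homomorphism $\cF_k\to G$ (images of subgroups under non-surjective maps don't behave that cleanly); what makes the argument work is the dual observation that any $H'\le G$ with $Synd(H';\vec g)=C$ pulls back to a subgroup of $\cF_k$ with syndrome $C$, so that $w\notin\Sigma_C$ forces $w(\vec g)\notin\Sigma_C^{\vec g}(G)$.
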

Corollary \ref{cor:synopsis} defines a map $\vec g \mapsto \{Y(j)\mid j\leq 61|\vec g |\}$. Applying this operation  on each $Y(j)$ outputs  a collection $\{Y(j,j')\mid j\leq 61|\vec g |,j'\leq 61|Y(j)|=61\cdot 12\log|\vec g |\}$. Applying this operation again on each $Y(j,j')$ provides  sets $Y(j,j',j'')$ and we can keep going.  After $t$ steps, we have sets labeled by $Y(j,j',...,j^{(t-1)})$. Each of these sets is of size \[
\underbrace{12\log(12\log(12\log...12(\log |\vec g |)...))}_{t-\textrm{times}},\]
and there are
\[
61|\vec g |\times (61\cdot12\log |\vec g |)\times (61\cdot 12\log(12\log |\vec g |))...=61^t\cdot 12^{t-1}\cdot\prod_{j=0}^{t-1}\underbrace{\log(12\log(12\log...12(\log |\vec g |)...))}_{j-\textrm{times}}
\]
many of them.
By construction, if $\vec g$ is a generating set of $G$, then for every  $H\lneq G$ its syndrome in at least $(1-\frac{2}{e})^t$ of the $Y(j,j',...,j^{(t-1)})$ is non-trivial. Hence, if we construct $\Psi(Y(j,j',...,j^{(t-1)}))$, then $H$ avoids $\frac{1}{2}\cdot(1-\frac{2}{e})^t$ of the words in $\cA=\bigcup \Psi(Y(j,j',...,j^{(t-1)}))$, while 
\[
|\cA|=\left(61^t\cdot 12^{t-2}\prod_{j=0}^{t-2}\underbrace{\log(12\log(12\log...12(\log |\vec g |)...))}_{j-\textrm{times}}\right)\times 2^{\overbrace{12\log(12\log(12\log...12(\log |\vec g |)...))}^{t-\textrm{times}}}.
\]
Now, if we begin with $\vec g =B$ the basis of the free group $\cF_k$, then $|\vec g |=k$ and $$|\cA|=O_t(k\cdot\log k\cdot\log\log k \cdot...\cdot(\underbrace{\log\log...\log k}_{t-1\ \textrm{times}})^{13}).$$
The construction used in Corollary \ref{cor:resolving_main_1} is exactly this one with $t=1$. If we apply this construction with   bigger $t$'s, then the $\delta$ parameter deteriorates exponentially in $t$, while the sets (asymptotically) shrink by a factor of $(\overbrace{\log\log...\log 
 k}^{{t-1\ \textrm{times}}}/\underbrace{\log\log...\log k}_{t-2\ \textrm{times}})^{13}$. 
 \begin{cor}\label{cor:two_iterations}
     By applying the operation above with $t=2$, the obtained set $\cA$ is of size $O(k\log^{13} k)$ and has a constant detection probability \nicefrac{1}{32}, proving Theorem \ref{thm:intro_poly_sized_set}. In this case, $L(A)$ is $O(k\log k \log\log k)$.
 \end{cor}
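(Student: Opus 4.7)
The plan is to instantiate the iterative construction described in the preceding paragraphs with $t=2$, starting from $\vec g = B$ the basis of $\cF_k$, and to read off the three claimed bounds directly from two applications of Corollary \ref{cor:synopsis}.

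Concretely, I first apply item (3) of Corollary \ref{cor:synopsis} to $B$ to produce $61k$ subsets $Y(j)\subseteq \Lambda(B)$, each of size $12\log k$, such that for every $\emptyset\neq C\subseteq B$ at most a $(2/e)$-fraction of the $Y(j)$ are contained in $\Sigma_C^B(\cF_k)$. Then, treating each $Y(j)$ as its own generating set, I apply item (3) again to obtain $61\cdot 12\log k$ subsets $Y(j,j')\subseteq \Lambda(Y(j))$, each of size $12\log(12\log k)$, satisfying the analogous syndrome-avoidance property relative to $Y(j)$. Finally I apply $\Psi$ to each $Y(j,j')$ and set $\cA = \bigcup_{j,j'} \Psi(Y(j,j'))$.

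The size bound $|\cA|\leq 61k\cdot 61\cdot 12\log k\cdot (12\log k)^{12} = O(k\log^{13}k)$ is then immediate. For the length, I iterate item (6) of Corollary \ref{cor:synopsis}: since $L(B)=1$, elements of $Y(j)$ have length at most $k$ in $B$; hence elements of $Y(j,j')$ have length at most $12 k\log k$ in $B$; and finally the words in $\Psi(Y(j,j'))$ have length at most $12\log(12\log k)\cdot 12 k\log k = O(k\log k\log\log k)$ in $B$.

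For the detection probability, fix any $H\lneq \cF_k$ and let $C = B\setminus H\neq\emptyset$. The first-level property forces at least a $(1-2/e)$-fraction of the $Y(j)$ to contain some element outside $H$. For each such $j$, the second-level property forces at least a $(1-2/e)$-fraction of the $Y(j,j')$ to contain some element outside $H$, and for every such $Y(j,j')$, item (5) of Corollary \ref{cor:synopsis} gives $|H\cap \Psi(Y(j,j'))|\leq |\Psi(Y(j,j'))|/2$. Composing the three fractions yields $\delta(\cA;\cF_k)\geq \tfrac{1}{2}(1-2/e)^2$, and an elementary numerical check confirms this exceeds $1/32$. The main conceptual point I expect to handle carefully is the propagation of the syndrome-avoidance guarantee from $B$ to $Y(j)$ to $Y(j,j')$; but since Corollary \ref{cor:synopsis}(3) is stated uniformly for an arbitrary ambient generating set, this chaining is automatic.
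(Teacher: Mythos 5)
Your proposal is correct and follows essentially the same route as the paper: two applications of Corollary \ref{cor:synopsis}(3) followed by one application of $\Psi$, with the size, length, and detection bounds read off from items (1)--(6) exactly as the paper does in the paragraph preceding the corollary. The chaining of syndrome-avoidance from $B$ to $Y(j)$ to $Y(j,j')$ is handled just as in the paper, and the numerical check $\tfrac{1}{2}(1-\tfrac{2}{e})^2 \approx 0.0349 > \tfrac{1}{32}$ is sound.
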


 \subsection{Good universal codes via iterative encoding}\label{sec:Spielman}
In this section we prove our main result, Theorem \ref{thm:Spielman_construction} (which implies Theorem \ref{conj:main}). To that end, we use Spielman's `simple' construction of error-reduction codes from \cite[Section 4]{spielman1995linear} as a blueprint. 
In the spirit of the partial solution from the previous section, we view constructions of sets with high detection probability as functions from ordered subsets of the free group to ordered subsets of the free group via their set word map \eqref{eq:defn_word_map}.

Let us first use a bipartite graph to define a tuple of words, and thus a set word map. Given  $\Gamma=(L,R,E)$ with $L=\{x_1,...,x_k\}$ and $R=\{b_1,...,b_n\}$ being ordered sets, we can associate with each vertex $b$ on the right side of $\Gamma$ a word $w=w_b$ as follows: If $N(b)$ is the collection of neighbors of $b$ in $L$, then we define $w_b=\prod_{x\in N(b)}x$, where the ordering of the product is induced by the order on $L$ itself. This means that every bipartite graph defines a set word map $\Upsilon_\Gamma\colon G^k\to G^n$, induced by the words $w_b$, on every group $G$.

 \begin{lem}[Existence of unique neighbor expanders]\label{lem:unique_neighbors1}
     For every  rational $0<\beta\leq 1$ and $0<\eps<\nicefrac{1}{2}$, there are  large enough $d,n_0\in \mathbb{N}$ and a small enough $\alpha>0$, such that for every $n\geq n_0$ such that $\beta n$ is natural, there is a bipartite graph with $n$ vertices on the left side $L$, $\beta n$ vertices on the right side $R$, which is left $d$-regular, and such that  every set $S\subseteq L$ of size at most $\alpha n$ has at least $(1-\eps)d|S|$ many neighbors in $R$. In particular, every such $S$ has at least $(1-2\eps)d|S|$ many unique neighbors.

     We call such graphs $(d,\beta,\alpha,1-2\eps)$ unique neighbor expanders.\footnote{They are usually called $(d,\alpha,\eps)$-lossless expanders. But, as we want to emphasize the size of the unique neighbors set and the relative sizes of the two sides of the graph, we chose our terminology.} 
 \end{lem}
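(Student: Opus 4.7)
The plan is to prove the lemma by the probabilistic method, following the classical Bassalygo--Pinsker / Sipser--Spielman argument. I sample a random left-$d$-regular bipartite graph $\Gamma = (L, R, E)$ with $|L| = n$ and $|R| = \beta n$ by letting each left vertex $x \in L$ choose its $d$ neighbors in $R$ independently and uniformly at random, with replacement. Any multi-edges created this way are harmless, since they only decrease $|N(S)|$ below the bound I prove. The claim is that with positive probability, every $S \subseteq L$ with $1 \le |S| \le \alpha n$ satisfies $|N(S)| \ge (1-\eps)d|S|$, for suitably chosen $d$ large and $\alpha$ small depending on $\eps,\beta$.

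Fix $s$ with $1 \le s \le \alpha n$. If some $S$ with $|S| = s$ has $|N(S)| < (1-\eps)ds$, then $N(S)$ is contained in some $T \subseteq R$ of size $t = \lceil(1-\eps)ds\rceil$, and the probability that all $ds$ random edges emanating from $S$ fall in such a fixed $T$ is $(t/\beta n)^{ds}$. Union bounding over $S$ and $T$, and using $\binom{N}{k} \le (eN/k)^{k}$, direct manipulation yields
\begin{equation*}
\Pro\bigl[\exists\, S,\ |S|=s,\ |N(S)|<(1-\eps)ds\bigr] \le \binom{n}{s}\binom{\beta n}{t}\left(\frac{t}{\beta n}\right)^{ds} \le \bigl[C(d,\eps,\beta)\cdot (s/n)^{\eps d - 1}\bigr]^{s},
\end{equation*}
where $C(d,\eps,\beta) = e^{1+(1-\eps)d}\cdot((1-\eps)d/\beta)^{\eps d}$ is an explicit constant independent of $n$ and $s$; the key observation is that the two $((1-\eps)ds/\beta n)$--type factors in the bound combine into a single $\eps ds$-power, which is what produces the exponent $\eps d - 1$.

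The main (rather mild) obstacle is parameter tuning around this critical exponent. I first choose $d > 1/\eps$ so that $\eps d - 1 > 0$, and then pick $\alpha = \alpha(\eps, \beta, d) > 0$ small enough that $C(d,\eps,\beta)\cdot\alpha^{\eps d - 1} < 1/3$. With these choices, for every $s \le \alpha n$ the per-$s$ bound is at most $(1/3)^{s}$, and summing over $1 \le s \le \lfloor\alpha n\rfloor$ gives a total failure probability at most $\sum_{s \ge 1}(1/3)^s = 1/2 < 1$. Hence, for every $n \ge n_0(\eps,\beta,d)$ with $\beta n \in \mathbb{N}$ (so that the sizes of $L, R$ are integers), a graph $\Gamma$ with the required $(1-\eps)d$-expansion on all sets $S \subseteq L$ of size at most $\alpha n$ exists.

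Finally, the unique-neighbor conclusion is a routine double-count: if $r = |N(S)| \ge (1-\eps)d|S|$ and $u$ is the number of \emph{unique} neighbors of $S$ (right vertices touched by exactly one edge from $S$), then the $d|S|$ edges emanating from $S$ satisfy $d|S| \ge u + 2(r - u) = 2r - u$, so $u \ge 2r - d|S| \ge (1-2\eps)d|S|$. This produces the stated $(d, \beta, \alpha, 1-2\eps)$ unique neighbor expander.
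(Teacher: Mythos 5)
Your proof is correct and follows essentially the same route as the paper's Appendix~A: sample a random left-$d$-regular bipartite graph, union bound over pairs $(S,T)$ with $|T|<(1-\eps)d|S|$ using $\binom{N}{k}\le(eN/k)^k$, observe the combined bound has the form $\bigl(C\cdot(s/n)^{\eps d-1}\bigr)^s$, and tune $d$ and $\alpha$ so the geometric tail sums below $1$. The only (minor and welcome) addition is that you spell out the double-counting argument $u\ge 2r-d|S|$ deriving the unique-neighbor bound from the expansion bound, which the paper asserts without proof.
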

\begin{rem}\label{rem:explicit_unique_neighbor_exist}
Explicit constructions that prove the above Lemma appear in \cite{capalbo2002randomness,alon2002explicit,golowich2024new}.
  We provide a non-constructive proof of Lemma \ref{lem:unique_neighbors1} in Appendix \ref{sec:existence_lossless}. This proof is \textbf{very} standard, and versions of it can be traced back to \cite{pinsker1973complexity} and even \cite{kolmogorov1967realization}. We used  the proof appearing in  Lemma 1.9 of \cite{Hoory_Linial_Wigderson} as a blueprint. A slightly more involved (yet still non-constructive) proof, with an essentially optimal $\eps$ parameter, can be found in Appendix II of \cite{sipser1996expander}. 
\end{rem}

\begin{rem}\label{rem:parameters_unique_neighbors_spielman}
 In this section, we use unique neighbor expanders with $\beta=\nicefrac{1}{2}$ and $\eps=\nicefrac{1}{4}$. By Lemma \ref{lem:unique_neighbors1}, there are (explicit computable) constants $d,n_0,\alpha$ such that for every even $n$ there is a (explicit) $(d,\nicefrac{1}{2},\alpha,\nicefrac{1}{2})$-unique neighbor expander. Specifically, we can assume $d\geq 16$. 
 
 The reader who wants concrete numbers and is willing to give up explicitness of the construction, can use the proof of the above Lemma  appearing in Appendix \ref{sec:existence_lossless},  and take $d=16$, $\alpha=2^{-128}$ and $n_0= 2^{128}$.  
 
Fix for every sufficiently large even $n$  a $(d,\nicefrac{1}{2},\alpha,\nicefrac{1}{2})$-unique neighbor expander and denote it  by $\Gamma_n$. 
   
\end{rem}

 \begin{lem}\label{lem:iterations_of_Spielman}
      Assume that there is a subset $A\subseteq \cF_k$ of size $4k$, with $\delta(A;\cF_k)\geq\delta$ for $0<\delta\leq \nicefrac{\alpha}{4}$ and $k\geq n_0$. Then, there is a subset $A'\subseteq\cF_{2k}$ of size $8k$ with $\delta(A';\cF_k)\geq \delta$.
 \end{lem}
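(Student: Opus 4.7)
The plan is to emulate Spielman's iterative encoding inside the free group. Let $\Gamma_{2k}$ be the $(d,\nicefrac{1}{2},\alpha,\nicefrac{1}{2})$-unique neighbor expander fixed in Remark \ref{rem:parameters_unique_neighbors_spielman}, with left side $L$ identified with the basis $B=\{x_1,\ldots,x_{2k}\}$ of $\cF_{2k}$ and right side $R$ of size $k$. The associated set word map $\Upsilon_{\Gamma_{2k}}$ sends the basis tuple $\vec g=(x_1,\ldots,x_{2k})$ to a \emph{check tuple} $\vec h=(w_b)_{b\in R}\in \cF_{2k}^k$, where $w_b=\prod_{x\in N(b)} x$ is the product (in the order of $B$) of the left-neighbors of $b$. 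Next, recursively apply $A$, viewed as a set word map $\cF_k^k\to\cF_k^{4k}$, to $\vec h$ in order to obtain $A(\vec h)\in \cF_{2k}^{4k}$. The tuple $A'\in \cF_{2k}^{8k}$ will combine $\vec g$ (with appropriate multiplicity) together with $A(\vec h)$, where the multiplicities are tuned so that the detection analysis closes.

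\smallskip

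To verify $\delta(A';\cF_{2k})\geq \delta$, fix any $H\lneq \cF_{2k}$ and let $S=\{i:x_i\notin H\}$ be the syndrome of $H$ in the basis, so $|S|\geq 1$. The analysis splits into two regimes. If $|S|\leq 2\alpha k$, the unique neighbor property of $\Gamma_{2k}$ supplies at least $\tfrac{d}{2}\cdot|S|\geq 8$ right vertices $b$ with $|N(b)\cap S|=1$; for each such $b$, the word $w_b$ is a product of basis elements in which exactly one lies outside $H$, so Observation \ref{obs:one_trick} forces $w_b\notin H$. Hence $\vec h\not\subseteq H$, and the inductive hypothesis $\delta(A;\cF_k)\geq \delta$ yields $|A(\vec h)\cap H|\leq (1-\delta)\cdot 4k$, contributing at least $4k\delta$ elements to $A'\setminus H$. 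If instead $|S|>2\alpha k$, then the basis tuple $\vec g$ itself misses $H$ in $|S|>2\alpha k$ positions, and the hypothesis $\delta\leq \nicefrac{\alpha}{4}$ converts this into at least $8k\delta$ elements of $A'\setminus H$ directly from $\vec g$.

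\smallskip

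One then tunes the multiplicities of $\vec g$ and $A(\vec h)$ inside $A'$ so that in both regimes the count of elements of $A'$ outside $H$ is at least $8k\delta$, which is exactly $\delta(A';\cF_{2k})\geq \delta$. The main obstacle is the small syndrome regime: one must leverage the unique neighbor property of $\Gamma_{2k}$ (together with Observation \ref{obs:one_trick}) to certify that $\vec h$ escapes $H$ before invoking the inductive $\delta$-bound on $A(\vec h)$. The hypothesis $d\geq 16$ is what ensures this already for singleton syndromes $|S|=1$, and the constraint $\delta\leq \nicefrac{\alpha}{4}$ is precisely the threshold that reconciles the unique-neighbor regime $|S|\leq 2\alpha k$ with the direct-contribution regime in one construction.
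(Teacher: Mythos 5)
Your construction uses only one expander layer and this leaves a real gap in the small‑syndrome regime. You take $A'$ to be (some multiplicity $m$ of) the basis $\vec g$ together with $A(\vec h)$, where $\vec h=\Upsilon_{\Gamma_{2k}}(\vec g)$. With $|A'|=8k$ forced, the multiplicity is $m=2$, so $A'$ contributes $2|S|$ elements outside $H$ from the basis copies and, once $\vec h\not\subseteq H$ is certified, at most $|A(\vec h)\setminus H|\geq 4\delta k$ from the recursive block. Now take $|S|=1$ (a perfectly admissible syndrome). Then the total count outside $H$ is at most $2+4\delta k$, while the target is $8\delta k$; for any $k$ with $\delta k>\tfrac12$ (which always holds after one iteration of the corollary) this is strictly short. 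The "tune the multiplicities" escape hatch cannot save it: to cover $|S|=1$ you would need $m\geq 4\delta k$, which blows up $|A'|$ to $\Theta(\delta k^2)$ and destroys linearity. The root of the problem is that $\delta(A;\cF_k)\geq\delta$ only guarantees a $\delta$-fraction of $A(\vec h)$ escapes $H$, i.e.\ $\delta\cdot 4k$ elements, and that is half of what is needed.

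What is missing is the \emph{second} expander layer. In the paper's proof, after producing $D=\Upsilon_{\Gamma_{2k}}(B)$ (your $\vec h$) and $E=A(D)$, one applies a second unique-neighbor expander $\Gamma_{4k}$ to $E$ to obtain $F=\Upsilon_{\Gamma_{4k}}(E)$ of size $2k$, and sets $A'=B\cup E\cup F$ (sizes $2k+4k+2k=8k$). The crucial point is the amplification supplied by $\Gamma_{4k}$: whenever $\delta k\leq |Synd(H;E)|<8\delta k\leq \alpha\cdot 4k$, the unique-neighbor property gives $|Synd(H;F)|\geq \tfrac{d}{2}\cdot\delta k\geq 8\delta k$ using $d\geq 16$, closing exactly the regime your construction cannot reach. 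Note also that $d\geq 16$ is invoked for this amplification from $\delta k$ to $8\delta k$, not for handling singleton syndromes as you suggest; for $|S|=1$, any $d\geq 2$ already makes $Synd(H;D)$ non-empty. Your first stage (basis plus one $\Gamma_{2k}$ layer plus the recursive $A$) is correct and matches the paper; you just need to append the $\Gamma_{4k}$ layer and replace the multiplicity-tuning with the three-way union $B\cup E\cup F$ and the corresponding three-case syndrome analysis.
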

\begin{proof}
    Let $\Gamma_{2k}$ and $\Gamma_{4k}$ be the $(d,\nicefrac{1}{2},\alpha,\nicefrac{1}{2})$-unique neighbor expanders of sizes $2k$ and $4k$ respectively promised by Remark \ref{rem:parameters_unique_neighbors_spielman}. Define the following set $A'$. Start with the standard basis $B$ of $\cF_{2k}$. Apply the set word map $\Upsilon_{\Gamma_{2k}}$ on $B$ to get $D$, and note that $|D|=k$. Then, apply the set word map $A$ on $D$, to get $E$, and note that $|E|=4k$. Finally, apply the set word map $\Upsilon_{\Gamma_{4k}}$ on $E$ to get $F$, and note that $|F|=2k$. For $A'$, take the union (of ordered multi-sets) $B\cup E\cup F$, and note that $|A'|=8k$. See Figure \ref{fig:Spielman} for visualisation.

    Let $H\lneq \cF_{2k}$ be a proper subgroup. If $|Synd(H;B)|\geq 8\delta k$, then since $A'$ contains $B$ we deduce that $1-\nicefrac{|H\cap A'|}{8k}\geq \delta$. Otherwise, as $8\delta k\leq \alpha\cdot 2k$, by the fact $\Gamma_{2k}$ is a unique neighbor expander with $1-2\eps=\nicefrac{1}{2}$, there are at least $|Synd(H;B)|\cdot\nicefrac{d}{2}>0$ many words in $D=\Upsilon_{\Gamma_{2k}}(B)$ that contain exactly one of the elements of $Synd(H;B)$ in their defining product, and thus are not in $H$ --- this is our recurring argument, see Observation \ref{obs:one_trick} or   \eqref{eq:syndrome_implies_notin}. By the assumption on $A$, if $Synd(H;D)=Synd(H\cap\langle D\rangle ; D)\neq \emptyset$, then 
    $$|Synd(H;A(D))|=|Synd(H\cap \langle D\rangle ;A(D))|\geq \delta |D|=\delta k,$$ and $A(D)=E\subseteq A'$. If $|Synd(H;E)|\geq 8\delta k$, then we are done as before. 
    Otherwise, $$\delta k\leq |Synd(H;E)|\leq 8\delta k\leq \alpha\cdot 4k.$$ 
    So, since $\Gamma_{4k}$ is a unique neighbor expander with $1-2\eps=\nicefrac{1}{2}$, there are at least $\delta k\cdot \nicefrac{d}{2}$ words in $F=\Upsilon_{\Gamma_2}(E)$ with exactly one element out of $H$ in the product defining them, and they are thus not in $H$ (Observation \ref{obs:one_trick}). Namely, $|Synd(H;F)|\geq \delta k\cdot \nicefrac{d}{2}\geq 8\delta k$ in this case, as $d\geq 16$. All in all, in every possible case we deduced that $Synd(H;A')\geq 8\delta k$, and as $|A'|=8k$, we get $\delta(A';\cF_k)\geq \delta$ as needed.
\end{proof}

\begin{figure}
	\centering
	\begin{tikzpicture}[scale=1]
	\node[color=black] (B) at (-1,7) {The set $B$};
	\node[color=black] (D) at (3,7) {The set $D$};
	
	\node[color=black] (E) at (9,7) {The set $E$};
	
	\node[color=black] (F) at (13,7) {The set $F$};

	\node[draw, color=black, shape=circle] (x1) at (-1,5) { $\ \ x_{1}\ \ $};
	
	\node[draw, color=black, shape=circle] (x2) at (-1,2.5) { $\ \ x_2\ \ $};
	
	\node (xdots) at (-1,0) { $\vdots$};
	
	\node[draw, color=black, shape=circle] (x2k-1) at (-1,-2.5) { $x_{2k-1}$};
	
	\node[draw, color=black, shape=circle] (x2k) at (-1,-5) { $\ x_{2k}\ $};

	\node[draw, color=black, shape=diamond] (w1) at (3,4) { $\ w_1\ $};
	
	\node[draw, color=black, shape=diamond] (w2) at (3,2) { $\ w_2\ $};
	
	\node (wdots) at (3,0) { $\vdots$};
	\node[draw, color=black, shape=diamond] (wk-1) at (3,-2) { $w_{k-1}$};
	\node[draw, color=black, shape=diamond] (wk) at (3,-4) { $\ w_{k}\ $};

	\node[color=cyan] (graph2k) at (1.2,-7) { The graph $\Gamma_{2k}$};

	\node[draw, color=black, shape=circle] (v1) at (9,6) { $\ \ v_1\ \ $};
	
	\node[draw, color=black, shape=circle] (v2) at (9,3) { $\ \ v_2\ \ $};
	
	\node (vdots) at (9,0) { $\vdots$};
	\node[draw, color=black, shape=circle] (v4k-1) at (9,-3) { $v_{4k-1}$};
	\node[draw, color=black, shape=circle] (v4k) at (9,-6) { $\ v_{4k}\ $};
	
	\node[color=magenta] (Awordmap) at (6,0.5) { The set word map $A$};

	\node[draw, color=black, shape=circle] (y1) at (13,4) { $\ \ y_1\ \ $};
	
	\node[draw, color=black, shape=circle] (y2) at (13,2) { $\ \ y_2\ \ $};
	
	\node (ydots) at (13,0) { $\vdots$};
	
	\node[draw, color=black, shape=circle] (y2k-1) at (13,-2) { $y_{2k-1}$};
	
	\node[draw, color=black, shape=circle] (y2k) at (13,-4) { $\ y_{2k}\ $};
	\node[color=cyan] (graph4k) at (11.3,-7) { The graph $\Gamma_{4k}$};

	%edges pauli X
	\draw[cyan, -, dashed] (x1)--(w2);
	\draw[cyan, -, dashed] (x2)--(wk-1);
	\draw[cyan, -, dashed] (x1)--(wdots);
	\draw[cyan, -, dashed] (x1)--(w1);
	\draw[cyan, -, dashed] (x2)--(wk-1);
	\draw[cyan, -, dashed] (x2)--(w2);
	\draw[cyan, -, dashed] (x2)--(wdots);
	\draw[cyan, -, dashed] (x2k-1)--(wdots);
	\draw[cyan, -, dashed] (x2k-1)--(w1);
	\draw[cyan, -, dashed] (x2k-1)--(wk);
	\draw[cyan, -, dashed] (x2k)--(wdots);
	\draw[cyan, -, dashed] (x2k)--(w1);
	\draw[cyan, -, dashed] (x2k)--(w2);
	
	\draw[magenta, ->, solid] (wdots)--(vdots);
	
	\draw[cyan, -, dashed] (v1)--(y2k);
	\draw[cyan, -, dashed] (v2)--(y2k-1);
	\draw[cyan, -, dashed] (v1)--(ydots);
	\draw[cyan, -, dashed] (v1)--(y1);
	\draw[cyan, -, dashed] (v2)--(y2);
	\draw[cyan, -, dashed] (v2)--(ydots);
	\draw[cyan, -, dashed] (v4k-1)--(ydots);
	\draw[cyan, -, dashed] (v4k-1)--(y1);
	\draw[cyan, -, dashed] (v4k-1)--(y2k);
	\draw[cyan, -, dashed] (v4k)--(ydots);
	\draw[cyan, -, dashed] (v4k)--(y2k);
	\draw[cyan, -, dashed] (v4k)--(y2k-1);
	\end{tikzpicture}
	
	\caption{This is a visualisation of $A'$ that is constructed in Lemma \ref{lem:iterations_of_Spielman}.  The set $B=\{x_1,...,x_{2k}\}$ is the standard basis of $\cF_{2k}$. The $w_i$'s play both the role of right side vertices in $\Gamma_{2k}$ as well as the words $w_i(B)$ induced by the set word map $\Upsilon_{\Gamma_{2k}}$, which constitute the set $D$. The $v_i$'s are the image of the set word map $A$ applied on the tuple $D=(w_i(B))_{i=1}^{k}$, which constitutes $E$. Finally, the set $E=(v_i)_{i=1}^{4k}$ plays the role of the left side of $\Gamma_{4k}$ as well, and the $y_i$'s are the right hand side as well as the images of $\Upsilon_{\Gamma_{4k}}$, namely $F=(y_i(E))_{i=1}^{2k}$. All in all, $A'=B\cup E\cup F=\{x_1,...,x_{2k},v_1,...,v_{4k},y_1,...,y_{2k}\}$.
		Note that $\Gamma_{2k}$ and $\Gamma_{4k}$ are supposed to be $d$-left regular, as oppose to the Figure which only illustrates $3$-regularity.}

\end{figure}
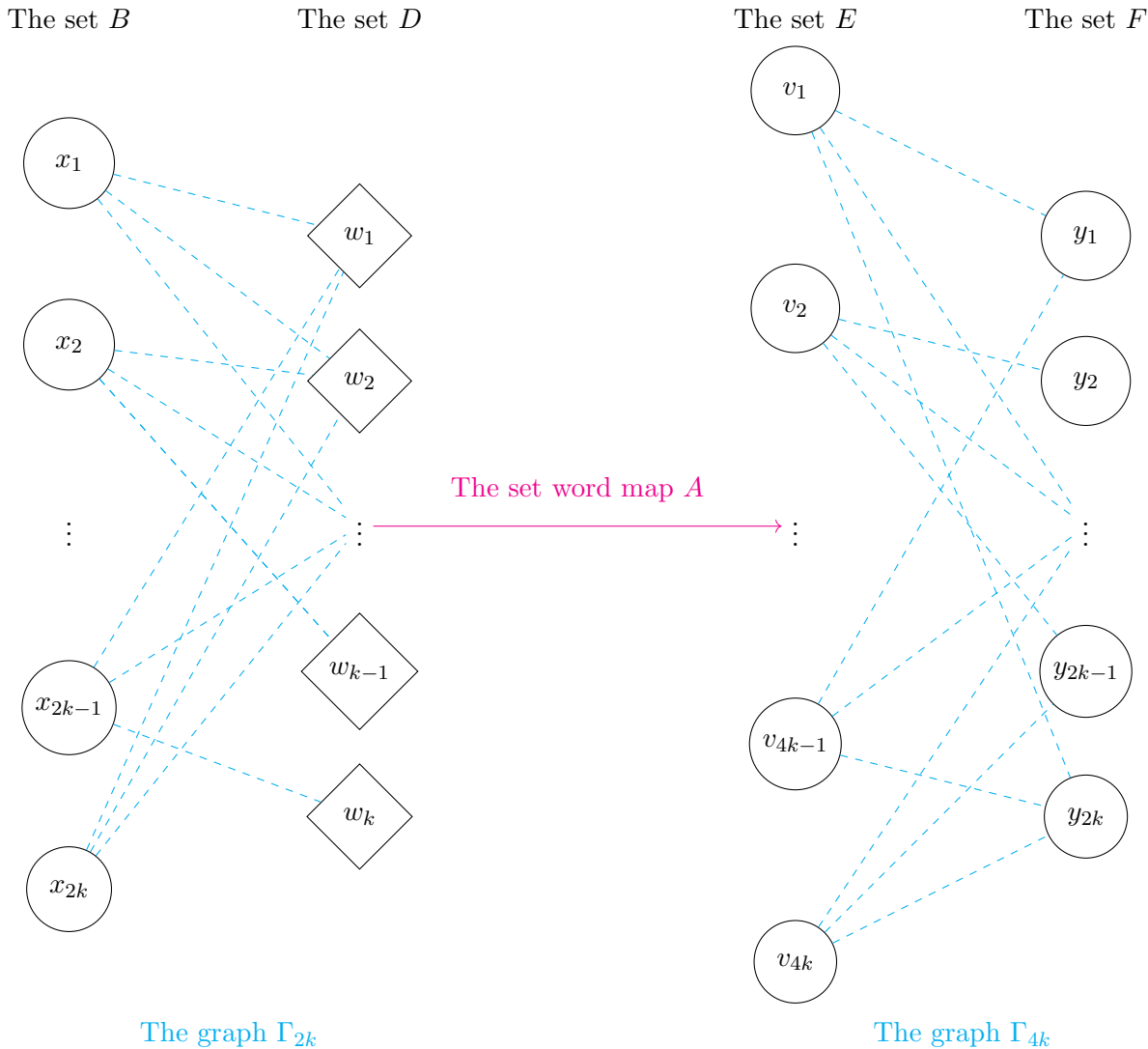

\begin{cor}\label{cor:blabla}
    Good universal codes exist. 
\end{cor}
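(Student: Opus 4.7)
The plan is to apply Lemma \ref{lem:iterations_of_Spielman} inductively to obtain good universal codes for ranks of the form $2^t k_0$, and then to fill in the remaining ranks and extend to arbitrary groups via Fact \ref{fact:basic_properties} and Remark \ref{rem:density}. First I would produce a base case: a multi-subset $A_0\subseteq \cF_{k_0}$ of size exactly $4k_0$ with $\delta(A_0;\cF_{k_0})\geq \delta$ for some constant $\delta\leq \nicefrac{\alpha}{4}$ and some $k_0\geq n_0$. A natural way to obtain such a base case is to start from the set of size $O(k_0\log^{13}k_0)$ and detection probability $\nicefrac{1}{32}$ guaranteed by Corollary \ref{cor:two_iterations}, and then to shrink its size down to exactly $4k_0$ by a bounded number of preliminary applications of (a slight generalization of) the lemma's construction, in which the unique neighbor expanders are taken of larger degree adapted to the current ratio $|A|/k$, so that detection probability is kept bounded away from zero throughout this shrinking phase.

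With the base case at hand, iterating Lemma \ref{lem:iterations_of_Spielman} doubles the rank at each step while preserving both the ratio $|A|/k=4$ and the detection probability $\delta$, yielding for every $t\geq 0$ a multi-subset $A_t\subseteq \cF_{2^tk_0}$ of size $4\cdot 2^t k_0$ with $\delta(A_t)\geq \delta$. To handle an arbitrary rank $k\geq k_0$, pick the smallest $t$ for which $s:=2^t k_0\geq k$, so that $s\leq 2k$, and push $A_t$ through the canonical epimorphism $\cF_s\twoheadrightarrow \cF_k$; by Fact \ref{fact:basic_properties}, this preserves the multi-set size and does not decrease the detection probability, giving $|A|\leq 8k$ and $\delta(A)\geq \delta$. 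The finitely many ranks $k<k_0$ are handled by the Hadamard construction of Theorem \ref{thm:exp_set} directly, yielding a set of bounded size with $\delta\geq \nicefrac{1}{2}$. Finally, for an arbitrary group $G$ of rank $k$, since $G$ is a quotient of $\cF_k$, Fact \ref{fact:basic_properties} lets the construction descend to $G$ with the same parameters, so good universal codes exist.

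The main obstacle I anticipate is precisely the base case: the earlier constructions produce sets of size $O(k_0\log^{13}k_0)$, whereas the hypothesis of the main lemma demands size exactly $4k_0$. Bridging this gap cleanly requires either a carefully crafted probabilistic reduction in the spirit of Section \ref{sec:randomized_construction}, or, as suggested above, a variant of the Spielman-style iteration employing higher-degree unique neighbor expanders for a bounded number of ``warm-up'' steps that drives the ratio $|A|/k$ toward the fixed point $4$ without collapsing the detection probability. Once this technicality is absorbed, the rest of the argument is a straightforward inductive application of the main lemma together with the already-observed robustness of the detection probability under quotient maps.
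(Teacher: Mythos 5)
Your inductive skeleton and the reduction to arbitrary ranks via Fact \ref{fact:basic_properties} and Remark \ref{rem:density} both match the paper's strategy, but you have correctly flagged a genuine gap in your own argument, and the gap is real: you have no working construction of the base case. Your idea of shrinking the $O(k_0\log^{13}k_0)$-sized set from Corollary \ref{cor:two_iterations} down to exactly $4k_0$ via ``warm-up'' iterations with adapted expanders is not substantiated and would require proving a new version of Lemma \ref{lem:iterations_of_Spielman} (the lemma as stated takes a size-$4k$ input and produces a size-$8k$ output for the free group of doubled rank; it does not compress a larger set over the same rank).

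The observation you are missing is that Lemma \ref{lem:iterations_of_Spielman} never asks for a \emph{good} starting detection probability --- it only asks for $0<\delta\le\nicefrac{\alpha}{4}$, i.e., a \emph{small enough} constant, and it preserves $\delta$ exactly under iteration. So the base case can be chosen to be trivially weak: fix $k_0\ge n_0$ with $\nicefrac{1}{k_0}\le\nicefrac{\alpha}{4}$, and let $A_0$ be the multi-set consisting of $4$ copies of the standard basis of $\cF_{k_0}$. Then $|A_0|=4k_0$ and $\delta(A_0;\cF_{k_0})=\nicefrac{1}{k_0}$ (for any proper $H$, at least one generator, and hence all $4$ of its copies, lies outside $H$). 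This satisfies the hypothesis of the lemma, and since $k_0$ is a fixed absolute constant, $\delta=\nicefrac{1}{k_0}$ is an absolute positive constant that propagates unchanged through every iteration. Once you replace your base case by this one, the rest of your argument goes through and coincides with the paper's proof. (One cosmetic point: the paper defers the passage from ranks $2^tk_0$ to arbitrary rank to the discussion after Corollary \ref{cor:blabla}, but folding it into the corollary as you do is harmless.)
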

\begin{proof}
Choose $k_0$ to be the smallest positive integer such that  $\delta=\nicefrac{1}{k_0}\leq \min\{\nicefrac{1}{n_0},\nicefrac{\alpha}{4}\}$. Take $A_0$ to be $4$ copies of the basis of $\cF_{k_0}$. Then, $A_0$ satisfies the conditions of Lemma \ref{lem:iterations_of_Spielman}, as its detection probability is $\nicefrac{1}{k_0}=\delta \leq \nicefrac{\alpha}{4}$ and $k_0\geq n_0$. Also the output $A'$ of Lemma \ref{lem:iterations_of_Spielman} satisfies its conditions given that the input satisfied them.  Hence, applying the Lemma repeatedly on $A_0$ proves the corollary.
\end{proof}

Combining Corollary \ref{cor:blabla} with item $(2)$ of Remark \ref{rem:density}, resolves Theorem \ref{thm:Spielman_construction} and thus Theorem \ref{conj:main}. Note that in Theorem \ref{thm:Spielman_construction} we used the bound $|A|\leq 8k$ while Corollary \ref{cor:blabla} provides $|A|=4k$. This is because of the density issue --- Corollary \ref{cor:blabla} works only for $k$'s which are large enough powers of $2$ times $k_0$ --- and the appropriate application of   item $(2)$ of Remark \ref{rem:density} results in a multiplicative factor $2$ loss. Note that the lengths of the words in $A'$ are  $DD'\cdot L(A)$, where $D$ is the maximal right degree of $\Gamma_{2k}$ and $D'$ is the maximal right degree of $\Gamma_{4k}$. By choosing unique neighbor expanders that are bi-regular (e.g., those from  \cite[Apendix II]{sipser1996expander}), the right regularity is $\nicefrac{d}{\beta}=2d$, which means the length of this construction is polynomial in $k$ with exponent $\log(4d^2)$.

\section{Abelian codes}\label{sec:abelian}
As discussed in the introduction, we can study $\frak{G}$-codes for more restricted classes of groups $\frak{G}$.  
In this section, we prove Theorem \ref{thm:abelian_case}, 
namely find a test subset of linear size with constant detection probability in   \emph{free abelian groups}, and hence for every abelian group. 
Viewing this problem from the point of view of error-correcting codes, we are looking for a single generating matrix with integer coefficients, such that it is a good code $\text{mod }m$ simultaneously for all $m\in \mathbb{N}$. In particular, we construct one generating matrix that works simultaneously over all finite fields $\mathbb{F}_p$. 

Similarly to the previous section, here as well the methods are inspired by  error correcting codes. This time, these are Tanner codes with respect to unique neighbor expanders. As explicit constructions of unique neigbor expanders exist (Remark \ref{rem:explicit_unique_neighbor_exist}), it allows us to provide an explicit construction. 

Let us recall what we are looking for:
There exist constants $C\geq 1$ and $\delta>0$, such that for every $k\in \mathbb{N}$, there
is a subset $A\subseteq\mathbb{Z}^{k}$ satisfying 
\begin{align}
     &|A|\leq Ck;\label{clause1:thm_abelian}\\
    \forall H\lneq \mathbb{Z}^k\ \colon \ \ &\frac{\left|A\cap H\right|}{|A|}\leq1-\delta.\label{clause2:thm_abelian} 
\end{align}
\begin{rem}
    In  Theorem \ref{thm:abelian_case} we stated that we can find $A$ such that $C\leq 4$ and $\delta\geq \nicefrac{1}{320}$. But, in this Section we describe a general way of constructing these $A$'s, and the exact parameters are postponed to the end.
\end{rem}

Let $\cE$ be an $n\times k$ matrix with integer coefficients. Then, it defines a homomorphism $\enc \colon \mathbb{Z}^k\to \mathbb{Z}^n$, and, as in Section \ref{sec:proper_subgps_and_ECCs}, we are interested in the image $\cC(\cE)=\cC=\Img (\enc)$ which is a subgroup of $\mathbb{Z}^n$. Note that $\cC$ is the subgroup generated by the columns of $\cE$. Furthermore, 
\begin{equation}\label{eq:enc_by_E}
    \enc(v)=\cE\cdot v=(\langle a,v\rangle)_{a\in \textrm{Rows}(\cE)}.
\end{equation}
We abuse notation and keep using $\Phi_p$ as the $\mod p$ operation on all coordinates. Then $\Phi_p(\cC)$ is a linear subspace of $\FF_p^n$, and we can study its properties as an error correcting code. 
To prove Theorem \ref{thm:abelian_case}, we first reduce it to the following proposition.

\begin{prop}\label{prop:matrices_to_abelian_case}
There exist constants $C\geq 1$ and $\delta>0$, such that for every $k\in \mathbb{N}$, there is a matrix $\mathcal{E}$  with integer coefficients
of size $n\times k$ satisfying
the following:
\begin{enumerate}
    \item $n\leq Ck$.
    \item For every prime number 
$p$, the linear subspace $\Phi_p(\cC(\cE))$ is an $[n,k,\delta n]_p$-code.
\end{enumerate}
\end{prop}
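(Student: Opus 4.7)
The plan is to realize $\mathcal{E}$ as an integer generator matrix for a Tanner code whose parity check matrix is built from a bipartite unique neighbor expander augmented by a perfect matching. Adjoining the matching creates an identity block in the parity check matrix, and this is what guarantees that a single integer matrix $\mathcal{E}$ reduces to a $k$-dimensional good code over every $\mathbb{F}_p$ simultaneously.

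Concretely, I would fix rational constants $0 < \beta \leq 1$ and $0 < \eps < 1/2$ and invoke Lemma~\ref{lem:unique_neighbors1} to obtain the resulting $d, n_0, \alpha$ and, for all sufficiently large $k$, a $(d,\beta,\alpha,1-2\eps)$-unique neighbor expander $\Gamma_0 = (L_1, R, E_0)$ with $|L_1| = k$, $|R| = \beta k$, and left-degree $d$. Set $n = (1+\beta)k$, introduce a disjoint set $L_2$ of $\beta k$ vertices together with a bijection $\mu : L_2 \to R$, and form $\Gamma = (L_1 \sqcup L_2, R, E_0 \cup \{(l,\mu(l)) : l \in L_2\})$. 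After ordering the columns so that the $L_2$ columns come last, the biadjacency parity check matrix of $\Gamma$ takes block form $H = [A \mid I_{\beta k}]$, where $A \in \{0,1\}^{\beta k \times k}$ is the biadjacency of $\Gamma_0$. Define
\[
\mathcal{E} = \begin{pmatrix} I_k \\ -A \end{pmatrix} \in \mathbb{Z}^{n \times k}.
\]
Then $H\mathcal{E} = 0$ over $\mathbb{Z}$ and $\mathcal{E}$ has rank $k$ over every ring, so $n = (1+\beta)k$ realizes the bound $n \leq Ck$.

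To show that $\Phi_p(\mathcal{C}(\mathcal{E}))$ is an $[n,k,\delta n]_p$-code uniformly in $p$, I would combine two ingredients. First, the identity block in $H$ makes the gcd of all $(\beta k)\times(\beta k)$ minors of $H$ equal to $1$, so every nonzero Smith invariant factor of $H$ is $1$. Consequently $\mathbb{Z}^n/\ker_{\mathbb{Z}}(H)$ is torsion-free, tensoring the short exact sequence $0 \to \ker_{\mathbb{Z}}(H) \to \mathbb{Z}^n \to \mathbb{Z}^n/\ker_{\mathbb{Z}}(H) \to 0$ with $\mathbb{F}_p$ stays exact, and identifies $\ker_{\mathbb{Z}}(H)\otimes \mathbb{F}_p$ with $\ker_p(H)$. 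Solving $Hx = 0$ as $x_2 = -Ax_1$ shows $\Img(\mathcal{E}) = \ker_{\mathbb{Z}}(H)$, so $\Phi_p(\Img \mathcal{E}) = \ker_p(H)$ has dimension exactly $k$ over every $\mathbb{F}_p$. Second, the Sipser--Spielman argument bounds the distance: if $x \in \ker_p(H)$ is nonzero with $|\mathrm{supp}(x)| \leq \delta n$ for $\delta$ smaller than the expansion threshold of $\Gamma$, then unique neighbor expansion of $\Gamma$ produces some $r \in R$ with $|N_\Gamma(r) \cap \mathrm{supp}(x)| = 1$, and the parity check at $r$ collapses to $x_{l^\ast} \equiv 0 \pmod{p}$ for the unique $l^\ast \in N_\Gamma(r) \cap \mathrm{supp}(x)$, contradicting $l^\ast \in \mathrm{supp}(x)$.

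The main obstacle will be verifying that the augmented graph $\Gamma$ retains good unique neighbor expansion on subsets $S \subseteq L_1 \sqcup L_2$ that straddle the two parts. Writing $S = S_1 \sqcup S_2$ with $S_i \subseteq L_i$, the unique neighbors of $S$ in $\Gamma$ split into (i) $\Gamma_0$-unique neighbors of $S_1$ whose matched partner under $\mu$ avoids $S_2$, and (ii) elements of $\mu(S_2)$ disjoint from $N_{\Gamma_0}(S_1)$. In the regime $|S_2| \approx d|S_1|$ these two contributions can nearly cancel under a crude count, so I would take $d$ sufficiently large relative to $1/\eps$ and leverage the lossless bound $|N_{\Gamma_0}(S_1)| \geq (1-\eps)d|S_1|$ to secure a constant-fraction surplus in one of the two types. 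This yields that $\Gamma$ is a $(\alpha', 1 - 2\eps')$-unique neighbor expander with somewhat weaker but still absolute constants $\alpha', \eps' > 0$, enough to conclude $\delta \geq \alpha'$ in the Sipser--Spielman step uniformly in $p$.
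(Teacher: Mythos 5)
Your construction has a fatal flaw: the systematic encoding $\mathcal{E}=\begin{pmatrix} I_k \\ -A\end{pmatrix}$ forces the code to have constant minimum distance, independent of $n$. Indeed, take $v=e_1\in\mathbb{F}_p^k$: the codeword $\mathcal{E}e_1 = (e_1;-Ae_1)$ has Hamming weight exactly $1+d$, since $Ae_1$ is the indicator of $N_{\Gamma_0}(l_1)$ and $\Gamma_0$ is left $d$-regular. Because $d$ is an absolute constant, the minimum distance of $\Phi_p(\mathcal{C}(\mathcal{E}))$ is at most $d+1=O(1)$, not $\Omega(n)$. Equivalently, and this is the gap your final paragraph gestures at without resolving, the augmented graph $\Gamma$ is simply not a unique neighbor expander: the set $S = \{l_1\}\sqcup \mu^{-1}\bigl(N_{\Gamma_0}(l_1)\bigr)$ of size $d+1$ has \emph{no} unique neighbors at all in $\Gamma$, since every $r\in N_{\Gamma_0}(l_1)$ receives exactly two edges from $S$ (one from $l_1$, one from the matched vertex) and every other $r\in R$ receives zero. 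No choice of $d$ large relative to $1/\eps$ repairs this, because the construction has no unique neighbor for this $S$ whatsoever. The underlying issue is structural: $L_2$ consists of degree-one vertices, and attaching degree-one ``syndrome'' coordinates that are sparse linear functions of the message always produces short codewords.

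The paper avoids this entirely: it does not augment the graph, but instead takes $n=|L|$, sets $H = \ker(\pi_\mathbb{Z}\colon\mathbb{Z}^L\to\mathbb{Z}^R)$, and lets the columns of $\mathcal{E}$ be any $\mathbb{Z}$-basis of $H$, so that $\mathcal{C}(\mathcal{E}) = H$ lives in $\mathbb{Z}^L$ and the distance bound comes directly from Claim \ref{claim:unique_neighbor_implies_distance} applied to $\Gamma_0$ itself. The dimension and the ``uniform over all $p$'' claims are handled by Claim \ref{claim:prop_of_ker_of_Z_homs}, which shows $H$ is a direct summand of $\mathbb{Z}^n$, so any $\mathbb{Z}$-basis of $H$ stays independent $\bmod\ p$. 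Your Smith-normal-form observation is a correct alternative route to that part, but the distance argument must stay inside $\mathbb{F}_p^L$; the moment you append the syndrome coordinates $L_2$ and measure weight there too, the distance collapses.
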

\begin{rem}
    By applying the methods of  Section \ref{sec:ECC}, a solution to Theorem \ref{thm:abelian_case} automatically implies a solution to Proposition \ref{prop:matrices_to_abelian_case}. The main take away from the following proof is that resolving Proposition \ref{prop:matrices_to_abelian_case} is also sufficient with the same constants $C$ and $\delta$. Also, as in Remark \ref{rem:simultanious2}, this construction works uniformly to all prime fields. But, as opposed to our solution in Section \ref{sec:fin_abelian_gps}, it is \emph{the same} encoding matrix regardless of the alphabet. 
\end{rem}

\begin{proof}[Proof of Theorem \ref{thm:abelian_case} assuming Proposition \ref{prop:matrices_to_abelian_case}]
Let $C_{0}\geq 1$, $\delta_{0}>0$ be the constants, and $\mathcal{E}$
the matrix of size $n\times k$, guaranteed by Proposition
\ref{prop:matrices_to_abelian_case}. Let $A=\textrm{Rows}(\cE_k)$ be the rows of the matrix $\mathcal{E}$. Since
$|A|=n\leq C_0k,$  clause \eqref{clause1:thm_abelian} is satisfied with $C=C_{0}$. 

Now, let us prove that clause \eqref{clause2:thm_abelian} is satisfied. As before,
we can assume $H$ is a maximal proper subgroup of $\mathbb{Z}^{k}.$
Since every subgroup of $\mathbb{Z}^{k}$ is normal, $H$ is the kernel
of a non-trivial homomorphism $f\colon\mathbb{Z}^{k}\to G$, where
$G$ is a simple group. Since $G$ is also the image of an abelian
group, it must be cyclic of prime order. Namely, $f\colon\mathbb{Z}^{k}\to\mathbb{F}_{p}$
for some prime number $p$. For every such $f$, $\ker \Phi_p=\left(p\mathbb{Z}\right)^{k}$
is contained in ${\rm ker}f=H$, which in turn implies that $f$ factors through $\FF_p^k$. Hence, there is a correspondence
between  maximal subgroups of $\mathbb{Z}^{k}$  and non-trivial functionals $\varphi\colon\mathbb{F}_{p}^{k}\to\mathbb{F}_{p}$, when $p$ runs over all primes.\footnote{The correspondence is
not one to one, since $\varphi$ and $\beta\varphi$ have the same
kernel for $0\neq\beta\in\mathbb{F}_{p}$, but it does not effect
our argument.} Let $\vec{0}\neq\alpha\in\mathbb{F}_{p}^{k}$ be the vector, as in Section \ref{sec:proper_subgps_and_ECCs}, for which $\varphi=\varphi_{\alpha}$, i.e., $\varphi_{\alpha}(v)=\left\langle v,\alpha\right\rangle =\sum_{i=1}^{k}v_{i}\alpha_{i}$.
Then
\begin{align*}
A\cap H & =\left\{ a\in A\mid f(a)=0\right\} \\
 & =\left\{ a\in \Phi_p(A)\mid\varphi_{\alpha}(a)=0\right\} \\
 & =\left\{ a\in \textrm{Rows}(\cE)\mid\left\langle a,\alpha\right\rangle =0\right\}.
\end{align*}
By \eqref{eq:enc_by_E}, $\left\{ a\in \textrm{Rows}(\cE)\mid\left\langle a,\alpha\right\rangle =0\right\}$ is the number of coordinates in $\enc(\alpha)=\cE\cdot \alpha$ that are zero. 
But,  clause $(2)$ of Proposition \ref{prop:matrices_to_abelian_case} tells us that $\Img(\enc)=\Phi_p(\cC(\cE))$ has dimension $k$ and distance $\delta_0 n$. The fact that $\Img(\enc)$ has dimension $k$ implies that $\enc$ is injective, and since $\alpha\neq \vec 0$, then also $ \vec 0\neq\enc(\alpha)\in \Img(\enc)$. Hence,
\[
|A\cap H|=n-w_H(\enc(\alpha))\leq n(1-\delta_0), 
\]
which proves clause \eqref{clause2:thm_abelian}  with $\delta=\delta_0$.
\end{proof}

In the rest of this section we discuss a way to construct a collection  of matrices satisfying
the conditions of Proposition \ref{prop:matrices_to_abelian_case}, and hence proving it, which implies a proof for Theorem \ref{thm:abelian_case}.

Instead of viewing a code as the image of an encoding map, one can view it as the kernel of a linear map. I.e., if $\cC\subseteq \FF_p^n$ is a linear subspace, there is a map $\pi\colon \FF_p^n\to \FF_p^m$ whose kernel is $\cC$. The matrix associated with $\pi$ is usually referred to as a \emph{parity check matrix} for $\cC$. 
Now, one can take the same approach to generate subgroups of $\mathbb{Z}^n$. Not every subgroup is the kernel of a map $\pi\colon \mathbb{Z}^n\to \mathbb{Z}^m$, but kernels of this form are  still a good source for subgroups. Furthermore, these subgroups have some helpful extra properties.

\begin{claim}\label{claim:prop_of_ker_of_Z_homs}
    Let $H$ be the kernel of a map $\pi\colon \mathbb{Z}^n\to \mathbb{Z}^m$. Then,
    \begin{enumerate}
        \item The rank of $H$ is at least $n-m$. 
        \item Every basis of $H$ stays linearly independent when taken $\mod p$.
    \end{enumerate}
\end{claim}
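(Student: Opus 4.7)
The plan is to exploit the short exact sequence
\[
0 \to H \to \mathbb{Z}^n \xrightarrow{\pi} \pi(\mathbb{Z}^n) \to 0,
\]
together with the key observation that the quotient $\mathbb{Z}^n/H \cong \pi(\mathbb{Z}^n)$ is torsion-free, since it embeds into the free abelian group $\mathbb{Z}^m$.

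For item $(1)$: I would note that $\pi(\mathbb{Z}^n)$ is a subgroup of $\mathbb{Z}^m$ and hence, by the standard structure theorem for finitely generated abelian groups (or just for subgroups of free abelian groups), is itself free abelian of rank $s\leq m$. Since free abelian groups are projective, the above short exact sequence splits, yielding $\mathbb{Z}^n\cong H\oplus\pi(\mathbb{Z}^n)$ as abelian groups. Comparing ranks gives $\mathrm{rank}(H)=n-s\geq n-m$.

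For item $(2)$: Let $\{h_1,\ldots,h_r\}$ be a basis of $H$, and suppose toward contradiction that the reductions $\bar{h}_1,\ldots,\bar{h}_r\in\mathbb{F}_p^n$ satisfy a nontrivial linear relation. I would lift the coefficients to integers $c_1,\ldots,c_r\in\mathbb{Z}$ not all divisible by $p$, so that $\sum c_ih_i\in p\mathbb{Z}^n$. Write $\sum c_ih_i=pv$ for some $v\in\mathbb{Z}^n$; applying $\pi$ gives $p\pi(v)=0$ in $\mathbb{Z}^m$. Since $\mathbb{Z}^m$ is torsion-free, $\pi(v)=0$, so $v\in H$. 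Expanding $v$ in the basis as $v=\sum d_ih_i$ and comparing to $\sum c_ih_i=p\sum d_ih_i$, the fact that the $h_i$ form a \emph{basis} of $H$ forces $c_i=pd_i$ for every $i$, contradicting the choice of the $c_i$.

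The argument is essentially the standard fact that an integer lattice with torsion-free quotient is a direct summand; no step requires real work. The only place where some care is needed is remembering that torsion-freeness of $\mathbb{Z}^n/H$ (equivalently, the fact that $H$ is a \emph{pure} subgroup, i.e.\ $H=p\mathbb{Z}^n\cap H$ implies divisibility within $H$) is exactly what makes the mod $p$ reduction injective on any basis, and this is precisely what the kernel hypothesis supplies. An equivalent and slightly cleaner rephrasing for $(2)$ would be to use the splitting from $(1)$ to pick an adapted basis $e_1,\ldots,e_n$ of $\mathbb{Z}^n$ in which $\{e_1,\ldots,e_r\}$ is a basis of $H$, and note that any other basis of $H$ differs from this one by a $GL_r(\mathbb{Z})$ change-of-basis, whose $\pm1$ determinant remains invertible modulo $p$.
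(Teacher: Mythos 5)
Your proof is correct and, while it circles the same underlying facts as the paper, it takes a noticeably different path, particularly on item $(2)$. For item $(1)$ the paper argues via $\mathbb{Q}$-dimensions: $H$ is the intersection of $\mathbb{Z}^n$ with the rational kernel of $\pi$, and the rank of a lattice in a $d$-dimensional $\mathbb{Q}$-subspace equals $d$; you instead split the short exact sequence $0\to H\to\mathbb{Z}^n\to\pi(\mathbb{Z}^n)\to 0$ using freeness (projectivity) of $\pi(\mathbb{Z}^n)\leq\mathbb{Z}^m$ and compare ranks. Both are standard and equally short. For item $(2)$ the paper proceeds structurally: torsion-freeness of $\mathbb{Z}^n/H$ makes $H$ a free factor of $\mathbb{Z}^n$, so a basis of $H$ extends to a basis of $\mathbb{Z}^n$, which reduces mod $p$ to a basis of $\mathbb{F}_p^n$. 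Your main argument is a hands-on purity computation: lift a hypothetical nontrivial mod-$p$ relation $\sum c_i h_i\in p\mathbb{Z}^n$ to $\sum c_i h_i = pv$, push it through $\pi$ to conclude $v\in H$ (using torsion-freeness of $\mathbb{Z}^m$), and then use $\mathbb{Z}$-linear independence of the $h_i$ to force $p\mid c_i$ for all $i$, a contradiction. This is more elementary in the sense that it never invokes the splitting or direct-summand language explicitly, just the purity of $H$ in $\mathbb{Z}^n$, which is exactly the torsion-freeness of the quotient in disguise; your closing remark that the direct-summand/adapted-basis route (together with a $GL_r(\mathbb{Z})$ change-of-basis to handle an arbitrary basis of $H$) is an equivalent rephrasing is precisely the paper's argument. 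Either version is fine; yours is a bit more self-contained, the paper's a bit more conceptual.
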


\begin{proof}
    Let us view $\pi$  as an integral $m\times n$ matrix. Then, the $\mathbb{Z}$-kernel of $\pi$, which is $H$, is the intersection of $\mathbb{Z}^n$ with the $\mathbb{Q}$-kernel of $\pi$. Moreover, the intersection of every $d$-dimensional subspace of $\mathbb{Q}^n$
with $\mathbb{Z}^n$ must have rank $d$. Hence, $\textrm{rank}(H)=\dim_\mathbb{Q}(\Ker(\pi))\geq n-m$. Furthermore, since $\mathbb{Z}^n/H$ is a subgroup of $\mathbb{Z}^m$, it has no torsion, and hence $\nicefrac{\mathbb{Z}}{H}$ is a free abelian group. Thus, $H$ is a free factor of $\mathbb{Z}^n$. In particular, every basis of $H$ can be completed to a basis of $\mathbb{Z}^n$, which is linearly independent modulo every prime number $p$. This proves clause $(2)$.
\end{proof}

A standard way for choosing codes is to use the adjacency matrix of a bipartite graph as the parity matrix of the code.\footnote{The bipartite graph which represents the constraints  is usually referred to as a \emph{Tanner graph} for the code.} A bipartite graph $\Gamma=(L,R,E)$ defines for every abelian group $G$ a homomorphism\footnote{Note that the homomorphism $\pi_G$ is exactly the set word map $\Upsilon_G$ used in Section \ref{sec:Spielman} in the abelian case, which turns out to induce a homomoephism in the class of abelian groups.} $\pi=\pi_G\colon G^L\to G^R$ by 
\begin{equation}\label{eq:bipartite_defines_map}
    \forall f\colon L\to G\ ,\  w\in R\ \colon \ \ \pi f(w)=\sum_{v\sim w}f(w).
\end{equation}
When $G=\FF_p$, the kernel of $\pi$ is a code.
Recall that a bipartite graph $\Gamma=(L,R,E)$ is an \emph{$\alpha$-left unique neighbor
expander} if for every set $S\subseteq L$ satisfying $|S|\leq\alpha|L|$,
one has a vertex $w\in R$ for which $|E\cap\left(S\times\{w\}\right)|=1,$
namely $v$ is connected by exactly one edge to $S$.
In words, every small enough subset of the left side of the
graph has at least one vertex on the right side of the graph that
touches it exactly once.\footnote{As opposed to Section \ref{sec:Spielman}, here we need this much weaker notion of unique neighbor expansion. The construction used therein, usually referred to as lossless expander, is not needed in this section.}

\begin{claim}\label{claim:unique_neighbor_implies_distance}
Let $0<\alpha<1$.
Let $\Gamma=\left(L,R,E\right)$ be an $\alpha$-left unique neighbor expander. Let $\pi_\mathbb{Z}\colon \mathbb{Z}^L\to \mathbb{Z}^R$ be the homomorphism defined in \eqref{eq:bipartite_defines_map}, and let $H$ be the kernel of $\pi_\mathbb{Z}$. Then, for every prime number $p$ and every $v\in H$, either $\Phi_p(v)=\vec 0$ or $w_H(\Phi_p(v))\geq \alpha |L|$.
\end{claim}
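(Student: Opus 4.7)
The plan is to run the standard Tanner-code distance argument after reducing modulo $p$. Given $v \in H = \ker(\pi_\mathbb{Z})$ and a prime $p$, let $u = \Phi_p(v) \in \mathbb{F}_p^L$ and let $S = \{x \in L \mid u(x) \neq 0\}$ be its support, so that $w_H(u) = |S|$. The whole task is to show that if $S$ is nonempty, then $|S| \geq \alpha |L|$.

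The key observation is that $u$ lies in the kernel of the mod-$p$ reduction $\pi_{\mathbb{F}_p} \colon \mathbb{F}_p^L \to \mathbb{F}_p^R$ of $\pi_\mathbb{Z}$. Indeed, since $\pi_\mathbb{Z}$ is defined by summing neighbor-values in $\mathbb{Z}$ (see \eqref{eq:bipartite_defines_map}), reduction modulo $p$ commutes with it: $\pi_{\mathbb{F}_p}(u) = \pi_{\mathbb{F}_p}(\Phi_p(v)) = \Phi_p(\pi_\mathbb{Z}(v)) = \Phi_p(\vec{0}) = \vec{0}$. Concretely, for every $w \in R$,
\[
\sum_{x \sim w} u(x) = 0 \quad \text{in } \mathbb{F}_p.
\]

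Now suppose for contradiction that $0 < |S| < \alpha |L|$. By the $\alpha$-left unique neighbor expansion of $\Gamma$, there exists a vertex $w \in R$ that is connected to $S$ by exactly one edge; call its unique neighbor in $S$ the vertex $x_0$. For every other neighbor $x$ of $w$ in $L$ we have $x \notin S$, so $u(x) = 0$. Plugging into the identity above,
\[
0 = \sum_{x \sim w} u(x) = u(x_0) \quad \text{in } \mathbb{F}_p,
\]
which contradicts $x_0 \in S$ (so $u(x_0) \neq 0$ in $\mathbb{F}_p$). Hence either $S = \emptyset$, meaning $\Phi_p(v) = \vec{0}$, or $|S| \geq \alpha |L|$, as required.

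The argument is essentially a one-step pigeonhole, and the only nontrivial ingredient is that $\pi_\mathbb{Z}$ is defined by integer sums (no signs), so the reduction mod $p$ of a $\mathbb{Z}$-kernel vector lands in the kernel of the $\mathbb{F}_p$-valued parity-check map associated to the same bipartite graph. There is no real obstacle; the subtlety to watch is simply that $\pi_{\mathbb{F}_p} \circ \Phi_p = \Phi_p \circ \pi_\mathbb{Z}$ holds because of the linear (sum-of-neighbors) form of $\pi$, which is a direct consequence of \eqref{eq:bipartite_defines_map}.
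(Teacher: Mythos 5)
Your proof is correct and follows essentially the same route as the paper: reduce mod $p$ via the commutativity $\pi_{\mathbb{F}_p}\circ\Phi_p=\Phi_p\circ\pi_{\mathbb{Z}}$ to land $\Phi_p(v)$ in $\ker\pi_{\mathbb{F}_p}$, then use unique-neighbor expansion to force a nonzero coordinate of the parity check on a small nonempty support. The only cosmetic difference is that the paper proves the lower bound on the Hamming weight for all nonzero $f\in\ker\pi_{\mathbb{F}_p}$ and then specializes, whereas you argue directly by contradiction on $u=\Phi_p(v)$; both are the same argument.
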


\begin{proof}
    Let us observe the following commutative diagram:
    \begin{center}
         \begin{tikzcd}
\mathbb{Z}^L \arrow[d, "\Phi_p"] \arrow[r, "\pi_\mathbb{Z}"]& \mathbb{Z}^R \arrow[d, "\Phi_p"] \\
  \FF_p^L  \arrow[r,"\pi_{\FF_p}"]& \FF_p^R
\end{tikzcd}
    \end{center}
    From it, we can deduce that $\Phi_p(H)=\Phi_p(\Ker(\pi_\mathbb{Z}))\subseteq \Ker(\pi_{\FF_p})$. Let $f\colon L\to \FF_p$ be a function, and let $S\subseteq L$ be the support of $f$. Assume $0<|S|=w_H(f)\leq \alpha|L|$. Thus, by the fact $\Gamma$ is an $\alpha$-left unique neighbor expander, there exist a $w\in R$ and $v\in S$ such that $v\sim w$ and no other vertex in $S$ is a neighbor of $w$. Hence, 
    \[
\pi_{\FF_p}f(w)=\sum_{v'\sim w}f(v')=f(v)+\sum_{v\neq v'\sim w}f(\underbrace{v'}_{\notin S})=f(\underbrace{v}_{\in S})\neq 0,
    \]
    and $f\notin \Ker(\pi_{\FF_p})$. Namely, every non-zero vector in $\Ker(\pi_{\FF_p})$ has Hamming weight of at least $\alpha|L|$. In particular, for every $v\in H$, $\Phi_p(v)$ is either $\vec 0$ or it is a non-zero vector in $\Ker(\pi_{\FF_p})$, which we just proved must satisfy $w_H(\Phi_p(v))\geq \alpha |L|$. 
   
\end{proof}

There are various ways to get bipartite $\alpha$-unique neighbor expanders $\Gamma=(L,R,E)$, as Remark \ref{rem:explicit_unique_neighbor_exist} demonstrates. Some of them  are non-explicit --- similar to the proof appearing in Appendix \ref{sec:existence_lossless}, which uses  a probabilistic-method argument --- and some are explicit. In both cases, there is a way of extracting the constant $\alpha$ and an upper bound $\beta$ on the ratio $\nicefrac{|R|}{|L|}$.

%Namely:
%\begin{fact}\label{fact:unique_neighbor_exp_exist}
%There exist constants $0<\alpha,\beta<1$ and an infinite family of bipartite
%graphs $\left\{ \Gamma=\left(L,R,E\right)\right\}$
%such that $|R|\leq \beta|L|$ and all of them are $\alpha$-left unique
%neighbor expanders. 
%\end{fact}

%\begin{rem}
 %   The random method in \cite{Hoory_Linial_Wigderson} generate a family of graphs whose left sizes $|L|$ covers all but finitely many  natural numbers. In the explicit constructions in \cites{capalbo2002randomness,alon2002explicit}, the collection of possible sizes for $|L|$ is sparser. Yet, it is still dense enough to use clause $(2)$ of Remark \ref{rem:maximal_subgroups_suffice}. 
%\end{rem}

\begin{proof}
[Proof of Proposition \ref{prop:matrices_to_abelian_case}] Let $\{\Gamma=(L,R,E)\}$ be the infinite collection of $\alpha$-left unique neighbor expanders with $|R|= \beta|L|$ guaranteed by Lemma \ref{lem:unique_neighbors1}. For every $\Gamma$ in the collection, let $\pi_\mathbb{Z}\colon \mathbb{Z}^L\to \mathbb{Z}^R$ be the homomorphism described in \eqref{eq:bipartite_defines_map}, and $H=\Ker(\pi_\mathbb{Z})$. Denote  $k=\textrm{rank}(H)$ and $n=|L|$. By Claim \ref{claim:prop_of_ker_of_Z_homs}, 
\[
k=\textrm{rank}(H)\geq |L|-|R|= (1-\beta)|L|=(1-\beta)n.
\]
Take any basis of $H$ and put it as columns of a matrix $\cE$. Then $\cC(\cE)=H$, and $\cE$ is of size $n\times k$. By Claim \ref{claim:prop_of_ker_of_Z_homs}, the columns of $\Phi_p(\cE)$ are linearly independent, which means that $\Phi_p(\cC(\cE))$ has dimension $k$ regardless of $p$. By Claim \ref{claim:unique_neighbor_implies_distance}, $\Phi_p(\cC(\cE))$ has distance of at least $\alpha|L|=\alpha n$. By choosing $C=\frac{1}{1-\beta}$ and $\delta=\alpha$, the matrix $\cE$ satisfies the conditions of Proposition \ref{prop:matrices_to_abelian_case} and we are done.
\end{proof}

Regarding lengths, by using a $d$-regular $\alpha$-unique neighbor expander, one can show --- for example, using Edmond's version of Gaussian elimination \cite[Theorem 2]{edmonds1967systems} --- that there is a basis for $\ker \pi_{\mathbb{Z}}$ such that all its entries are bounded by $2^{O(k)}$, which in turn bounds the length by a similar bound.

\begin{cor}\label{cor:nilpotent}
    Theorem \ref{thm:abelian_case} holds also for nilpotent groups, with the same constants. 
\end{cor}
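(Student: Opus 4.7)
The plan is to reduce the nilpotent case to the abelian case via the Frattini quotient, leveraging Fact \ref{fact:basic_properties}(3). Recall that for any nilpotent group $G$, every maximal subgroup is normal of prime index, and consequently $[G,G]\subseteq \Phi(G)$, so the quotient $G/\Phi(G)$ is abelian. Moreover, in a nilpotent group, a subset $X\subseteq G$ generates $G$ if and only if its image in $G/\Phi(G)$ generates the quotient; hence the rank of $G/\Phi(G)$ equals the rank of $G$, namely $k$.

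With this setup, I would invoke Theorem \ref{thm:abelian_case} applied to $\overline{G}=G/\Phi(G)$, which is abelian of rank $k$, to obtain an explicit subset $\overline{A}\subseteq \overline{G}$ with $|\overline A|=O(k)$, $\delta(\overline A;\overline G)=\Omega(1)$, and $L(\overline A)\le 2^{O(k)}$. The construction of $\overline A$ is given uniformly as a list of words in a prescribed generating set $\overline{x}_1,\ldots,\overline{x}_k$ of $\overline G$. Given any generating set $x_1,\ldots,x_k$ of $G$ (whose images necessarily generate $\overline G$ by the Frattini property), lift these words by substituting $x_i$ for $\overline{x}_i$ to produce a (multi-)subset $A\subseteq G$ with $|A|=|\overline A|$ and the same word length $L(A)=L(\overline A)\le 2^{O(k)}$.

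The quotient map $\pi\colon G\to \overline G$ is, by definition, a Frattini extension, so by Fact \ref{fact:basic_properties}(3) we have $\delta(A;G)=\delta(\pi(A);\overline G)=\delta(\overline A;\overline G)=\Omega(1)$, with exactly the same lower bound as in the abelian theorem. This gives $A\subseteq G$ with $|A|=O(k)$, $\delta(A;G)=\Omega(1)$, and $L(A)\le 2^{O(k)}$, proving the corollary with identical constants.

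There is no real obstacle here: the entire argument is a one-line application of Fact \ref{fact:basic_properties}(3) after identifying the right quotient. The only subtlety worth spelling out is the rank-preservation under the Frattini quotient, which is standard for nilpotent groups but should be stated explicitly (for infinite nilpotent groups it follows because $\Phi(G)$ consists of non-generators, by definition of the Frattini subgroup). One might also note that the same reduction gives analogous corollaries for any class of groups whose Frattini quotients land in a class for which good codes are already constructed.
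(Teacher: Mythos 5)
Your proof is correct and takes essentially the same approach as the paper: reduce to the abelian case via a Frattini extension and apply Fact~\ref{fact:basic_properties}(3). The only cosmetic difference is that you pass to $G/\Phi(G)$ while the paper passes to $G/[G,G]$ (using that $[G,G]$ lies in every maximal subgroup of a finitely generated nilpotent group); both quotients are abelian of the same rank and both maps are Frattini extensions, so the argument is the same.
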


\begin{proof}
    Every maximal subgroup of a finitely generated nilpotent group contains the commutator subgroup, and hence clause $(3)$ of Fact \ref{fact:basic_properties} finishes the proof.
\end{proof}

\section{Profinite groups with polynomial maximal subgroup growth}\label{sec:PMSG}

Let us start with a few sentences about profinite groups, for more see \cites{wilson1998profinite,ribes1970introduction,ribes2000profinite,fried2005field}. A profinite group $G$ is a topological group which is Hausdorff, compact and totally disconnected. Equivalently, $G$ is isomorphic (as a topological group) to an inverse limit of finite groups. A topological group $G$ is topologically generated by a set $A$, if the subgroup generated by $A$ is dense in $G$. The topological rank of $G$ in this case is the size of its smallest topological generating set. We keep using the notation $\textrm{rank}(G)$ for the topological rank if $G$ is topological. 
Since a profinite group $G$ is compact, it is equipped with a unique Haar measure $\mu$ satisfying $\mu(G)=1$. It therefore follows that an open, an thus finite index, subgroup $H$ of index $n$ has Haar measure $\mu(H)=\frac{1}{n}$.

In what follows we will study the proper subgroup testing problem for profinite groups. From that, we will deduce results about certain collections of finite groups. We use the notation $\delta(A,G)$ as before, quantifying only over proper \emph{closed} subgroups $H$. Unlike discrete groups, in the category of profinite groups, every proper closed subgroup is contained in a proper open subgroup, and we can deduce that 
\[
   \delta(A;G)=\inf\left\{1-\frac{|A\cap H|}{|A|}\ \middle|\ H\lneq_{open} G\right\}.
\] 
We mention in passing that for a (topologically) finitely generated profinite group $G$, a deep result of Nikolov--Segal \cite{nikolov2007finitely} asserts that every finite index subgroup of $G$ is open, but we will not use this result.

 A (profinite) group $G$ is said to have {\emph{polynomial maximal subgroup growth}} (PMSG) if there exists $E = E(G) >0$ such that $m_N(G) \le N^E$ for every $N \in \mathbb{N}$, where $m_N(G)$ is the number of maximal open subgroups of index $N$ in $G$.
A family of groups $ \frak{G}$ is said to have {\emph{uniform PMSG}} if there exists $E'>0$, such that for every $G \in \frak{G}$, $m_N(G) \le N^{E'\cdot \textrm{rank}(G)}$.
  In \cite{mann1996positively}, Mann shows that for the free prosolvable group on $k$ generators $\widehat\cF_k^{\textrm{sol}}$, $ m_N(\widehat\cF_k^{\textrm{sol}}) \le N^{\frac{13}{4} k+2}$.  This implies that the family of prosolvable groups (and, in particular the finite solvable groups) are uniformly PMSG. More generally, in \cite{borovik1996maximal}, Borovik--Pyber--Shalev showed the following: Let $L$ be a finite group.  Then there exists a constant $E(L)$ such that if $G$ is any profinite group of rank $k$  without a section $L$, then $m_N(G) \le N^{E(L)\textrm{rank}(G)} $ for every $n \in \mathbb{N}$.  Recall that $L$ is a section of $G$ if $G$ has an open subgroup which  is mapped onto $L$.  This  implies that the family 
  \[\frak{G}_{\neq L} = \{ G \hbox{\ profinite\ without\ } L-\hbox{section}\}
  \]
  is uniformly PMSG.
  Mann and Shalev showed in \cite{mann1996simple} that a profinite group $G$ is PMSG if and only if it is {\emph{ positively finitely generated}} (PFG), i.e. there exists a positive integer $r$, such that
\begin{equation*} \Pro_{x_1,...,x_r\sim G}[\overline{\langle x_1,...,x_r\rangle}=G] > 0,\end{equation*}
where $x\sim G$ is sampling an element according to the Haar measure of $G$.

\begin{thm} \label{thm:fin_sol_gps} Let $0 < \delta< \frac{1}{3},$ and $\frak{G}$ a family of profinite groups which are uniformly PMSG with constant $E'>0$.  Then, there exists a constant $C=C(E',\delta)$ such that every $G \in \frak{G}$ has a subset $A$ of size $|A|\leq C\cdot \textrm{rank}(G)$ with $\delta(A;G)\geq \delta$.  
\end{thm}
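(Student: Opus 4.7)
The plan is a probabilistic-method argument using the Haar measure on $G$. Fix $0 < \delta < \nicefrac{1}{3}$, let $k = \textrm{rank}(G)$, and set $n = Ck$ where $C = C(E',\delta)$ will be chosen later. Sample $x_1,\ldots,x_n$ independently according to the Haar measure $\mu$, and let $A$ be the resulting ordered multi-subset. As recalled in the preamble to the theorem, every proper closed subgroup of a profinite group is contained in a proper open subgroup, and every proper open subgroup is contained in a maximal open subgroup; hence it suffices to show that, with positive probability, $|A \cap H| \leq (1-\delta)n$ holds simultaneously for every maximal open subgroup $H$ of $G$. Any such $A$ automatically topologically generates $G$.

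For a fixed maximal open subgroup $H$ of index $N \geq 2$, each $x_i$ lies in $H$ independently with probability $\mu(H) = \nicefrac{1}{N}$, so $|A \cap H|$ is distributed as $\textrm{Bin}(n,\nicefrac{1}{N})$. Applying the multiplicative Chernoff bound yields
\[
\Pro\left[\,|A \cap H| \geq (1-\delta)n\,\right]\ \leq\ \exp(-n\,D_N),
\]
where $D_N := D\bigl((1-\delta)\,\|\,\nicefrac{1}{N}\bigr)$ is the relative entropy. A direct computation gives $D_2 = \log 2 - H_2(\delta) > 0$ (using $\delta < \nicefrac{1}{2}$), and for large $N$ one has $D_N \sim (1-\delta)\log N$. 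In particular there are constants $c_\delta > 0$ and $N_0(\delta)$ such that $D_N \geq c_\delta \log N$ for every $N \geq N_0(\delta)$.

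Combining this with the uniform PMSG bound $m_N(G) \leq N^{E'k}$ and a union bound over all maximal open subgroups gives
\[
\Pro[A\ \textrm{is bad}]\ \leq\ \sum_{N \geq 2} N^{E'k}\,e^{-n D_N}.
\]
I would split the sum into a head $2 \leq N \leq N_0(\delta)$, consisting of finitely many terms each of the form $N^{E'k}e^{-nD_N}$ (small once $n$ exceeds a multiple of $E'k$ depending on $\min_{N \leq N_0} D_N$), and a tail $N > N_0(\delta)$, each of whose terms is bounded by $N^{E'k - c_\delta n}$, so the tail sum is a convergent geometric-type series once $c_\delta n > E'k + 2$. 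Choosing $C$ sufficiently large in terms of $E'$ and $\delta$ --- concretely $C \gtrsim \max\bigl\{E'(\log 2)/D_2,\ E'/c_\delta\bigr\}$ --- makes the total failure probability strictly less than one, yielding a sample $A$ with $|A| \leq Ck$ and $\delta(A;G) \geq \delta$.

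The main technical difficulty is the balance between the two regimes. For small $N$ (and especially $N=2$) the Chernoff rate $D_N$ is merely a fixed positive constant, so one needs $\delta$ bounded away from $\nicefrac{1}{2}$ to keep $D_2 > 0$; the hypothesis $\delta < \nicefrac{1}{3}$ buys comfortable slack here. For large $N$ the \emph{count} $N^{E'k}$ is the bottleneck, and the logarithmic lower bound $D_N \geq c_\delta \log N$ is precisely what lets the PMSG exponent $E'k$ be absorbed by a $C$ that is linear in $E'$. A caveat worth recording: the resulting $A$ is inherently non-constructive, and since Haar-random elements of a profinite group carry no natural word length, the construction gives no bound on $L(A)$, which matches the ``Unbounded'' entry in the table of results.
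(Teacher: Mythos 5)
Your argument is correct and follows the same probabilistic strategy as the paper: sample $n=Ck$ Haar-random elements, bound the probability that a single maximal open subgroup of index $N$ captures more than $(1-\delta)n$ of them, then take a union bound over all maximal open subgroups via the uniform PMSG hypothesis $m_N(G)\leq N^{E'k}$. The one technical difference is in the single-subgroup tail estimate, and it leads to a structural difference in the union bound. You invoke the Chernoff bound with the $N$-dependent relative-entropy rate $D_N = D\bigl((1-\delta)\,\|\,\nicefrac{1}{N}\bigr)$, and because this rate degenerates to a fixed constant near $N=2$ while growing like $(1-\delta)\log N$ for large $N$, you are forced into the head/tail split with a threshold $N_0(\delta)$. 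The paper instead uses the cruder but $N$-uniform estimate
\[
\Pro\bigl[|A\cap M|\geq(1-\delta)n\bigr]\ \leq\ \binom{n}{(1-\delta)n}\,N^{-(1-\delta)n}\ \leq\ 2^{H_2(1-\delta)n}\,N^{-(1-\delta)n}\ \leq\ N^{(H_2(1-\delta)-(1-\delta))n},
\]
which collapses the union bound into a single series $\sum_{N\geq 2} N^{E'k+(H_2(1-\delta)-(1-\delta))n}$; one then just chooses $n$ so that the exponent is $\leq -2$, giving the explicit constant $C=\frac{-2E'}{H_2(1-\delta)-(1-\delta)}$ with no case analysis. Your rate is tighter pointwise, and your remark that the argument really only needs $\delta<\nicefrac{1}{2}$ (so that $D_2>0$) is a genuine bit of extra generality compared to the paper's condition $H_2(1-\delta)<1-\delta$, which in fact cuts out at a value of $\delta$ somewhat below $\nicefrac{1}{3}$. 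Your closing observations about non-constructivity and the absence of any bound on $L(A)$ agree with the paper's summary table.
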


\begin{proof}  Let $G \in \frak{G}$ with $\textrm{rank}(G)=k$. The idea of the proof is as follows: If $x\sim G$ and $M$ is a subgroup of $G$ of index $N$, then $\Pro[x\in M]=\nicefrac{1}{N}$. Hence, the probability $M$ avoids less than a $\delta$-fraction of $n$ independently sampled elements of $G$ decreases exponentially. Since we have a polynomial bound on the number of maximal subgroups of each index, we can use a union bound and choose $n$ so that there is a positive probability for the sampled $n$ elements to have a detection probability of $\delta$.

Let $A$ be a set of $n$ independently sampled elements of $G$. 
Then, for a maximal subgroup $M$ of $G$ of index $N$, we have 
\[
\begin{split}
    \Pro[|A\cap M|\geq (1-\delta)n]\leq \binom{n}{(1-\delta)n}\cdot \frac{1}{N^{(1-\delta)n}}\leq \frac{2^{H(1-\delta)n}}{N^{(1-\delta)n}}\leq N^{(H_2(1-\delta)-(1-\delta))n},
\end{split}
\]
where $H_2(1-\delta)=-\delta\log\delta-(1-\delta)\log(1-\delta)$ is the binary entropy function. Therefore, by a union bound, 
\[
\begin{split}
  \Pro[\exists M\leq_{max} G \colon   |A\cap M|\geq (1-\delta)n]&\leq \sum_{N=2}^\infty\underbrace{ m_N(G)}_{\leq N^{E'k}}N^{(H_2(1-\delta)-(1-\delta))n}\\
  &\leq \sum_{N=2}^\infty N^{E'k+{(H_2(1-\delta)-(1-\delta))n}}.
\end{split}
\]
In particular, if we choose $n$ such that $E'k+{(H_2(1-\delta)-(1-\delta))n}\leq -2$, then this probability is smaller than $1$ and we are done. 
Solving the inequality, and as $H_2(1-\delta)<1-\delta$ for $\delta<\nicefrac{1}{3}$,  we deduce that $$n\geq \frac{-2-E'k}{H_2(1-\delta)-(1-\delta)}\geq \frac{-2E'}{H_2(1-\delta)-(1-\delta)}\cdot k.$$
Therefore,  we can choose $C=\frac{-2E'}{H_2(1-\delta)-(1-\delta)}$, which finishes the proof.
\end{proof}

Since finite solvable groups are uniformly PMSG, Theorem \ref{thm:intro_solvable_groups} is deduced from the above. For finite solvable groups, by Mann's result,  $E'\leq \frac{17}{4}$, and if we take $\delta=\nicefrac{1}{10}$, then $H(1-\delta)-(1-\delta)\leq -\nicefrac{2}{5}$ and $C\leq 17\cdot 5=85$.

\section{Running time and lower bounds on the length}\label{sec:running_time}
The running time of the tester $\cT$ which we described in the introduction is composed of its sampling phase and verification phase. In the sampling phase, $\cT$ uses as much time steps as random bits, with some constant overhead, to choose the index $i$. Then, the tester needs to calculate $w_i\in \cF_k$ out of the sampled index $i$, write down $w_i$, and send it to the oracle of the function $h$. Lastly, after $h$ replies with a bit, it takes a constant time to decide whether to accept or reject. Specifically, the time resources needed for running $\cT$ is at least the length of (the encoding of) $w_i$, which is $|w_i|\cdot \log k$.\footnote{We use $|w|$ to denote the length of $w\in \cF_k$ as a word in the fixed basis $B=\{x_1,...,x_k\}$ of $\cF_k$.}

We now prove that to achieve detection probability of at least $\delta>0$, we need $\Ex_{w_i\in A}[|w_i|]\geq \delta k$, which in turn means that just to write down an average $w_i$ we need $\omega (\textrm{poly}(\log k))$ time steps.  This means that such a tester cannot be time efficient. 

\begin{prop}\label{prop:running_time}
    Let $A\subseteq \cF_k$ be a finite set, and $\delta>0$, such that
    \[
\forall H\lneq \cF_k \ \colon \ \ \frac{|A\cap H|}{|A|}\leq 1-\delta.
    \]
    Then, 
    \[
    \Ex_{w\in A}[|w|]\geq \delta k.
    \]
\end{prop}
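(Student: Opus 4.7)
The plan is to exhibit $k$ explicit proper subgroups, sum the detection probability inequality over them, and swap the order of summation so that each $w \in A$ contributes at most $|w|$ to the right hand side.

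Concretely, for each $i \in [k]$, I would take the homomorphism $\phi_i \colon \cF_k \to \mathbb{Z}/2\mathbb{Z}$ determined by $\phi_i(x_i)=1$ and $\phi_i(x_j)=0$ for $j\neq i$, and let $H_i = \ker \phi_i$. Each $H_i$ is a proper (index $2$) subgroup, so by hypothesis
\[
|A \setminus H_i| \geq \delta |A|.
\]
Summing over $i=1,\dots,k$ and interchanging the order of summation gives
\[
\delta k \,|A| \;\leq\; \sum_{i=1}^{k} |A \setminus H_i| \;=\; \sum_{w \in A} \bigl|\{\,i \in [k] : w \notin H_i\,\}\bigr|.
\]

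The key observation is then that for every $w \in \cF_k$, the cardinality $|\{i : w \notin H_i\}|$ equals the number of generators $x_i$ that appear in $w$ with odd total exponent, which is certainly bounded above by the number of distinct generators appearing in $w$, and in particular by $|w|$. Substituting into the previous inequality yields
\[
\delta k \,|A| \;\leq\; \sum_{w \in A} |w|,
\]
and dividing by $|A|$ gives the desired bound $\Ex_{w \in A}[|w|] \geq \delta k$.

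There is essentially no obstacle: the only thing to check is that the $\phi_i$ really factor through $\cF_k$, which is immediate because $\cF_k$ is free on $\{x_1,\dots,x_k\}$, and that the abelianized exponent parity governs membership in $H_i$, which is the definition. The argument is clean precisely because detection probability is defined by an infimum, so plugging in any family of proper subgroups yields a lower bound, and the $k$ index-$2$ kernels above are the most natural ones that ``see'' the individual letters of a word.
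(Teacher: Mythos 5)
Your proof is correct and uses essentially the same double-counting strategy as the paper's: pick $k$ proper subgroups indexed by the generators, observe that $w\notin H_i$ forces $x_i$ to appear in the reduced form of $w$, and swap the order of summation. The only cosmetic difference is the choice of $H_i$ — the paper takes the (infinite-index) subgroup $\langle x_1,\dots,\widehat{x_i},\dots,x_k\rangle$, while you take the index-$2$ kernel of the parity map $\phi_i\colon\cF_k\to\mathbb{Z}/2\mathbb{Z}$; both are contained in the set of words in which $x_i$ occurs an even number of times versus not at all, and the estimate $|\{i:w\notin H_i\}|\le|w|$ goes through identically.
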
  
\begin{proof}
    Let $B=\{x_1,...,x_k\}$ be a fixed basis of $\cF_k$. Let $H_i$ be the subgroup of $\cF_k$ generated by all of the $x_j$'s except for $x_i$. Then, $w\in H_i$ if and only if when writing it down as a reduced word in $B$, $x_i$ and its inverse do not appear. Therefore, 
    the condition $\frac{|A\cap H_i|}{|A|}\leq 1-\delta$ implies that $x_i$ or its inverse \textbf{appear} in at least $\delta|A|$ of the words in $A$. Since $|w|\geq \sum_{i=1}^k {\bf 1}_{x_i\in w}$, where $x_i\in w$ means that $x_i$ or its inverse appear in the word $w$, we have 
    \[
\Ex_{w\in A}[|w|]=\frac{1}{|A|}\sum_{w\in A}|w|\geq \frac{1}{|A|}\sum_{w\in A}\sum_{i=1}^k{\bf 1}_{x_i\in w}= \frac{1}{|A|}\sum_{i=1}^k\underbrace{\sum_{w\in A}{\bf 1}_{x_i\in w}}_{\geq\delta|A|}\geq \delta k.
    \]
\end{proof}

\section{Open Problems}\label{sec:open_problems}
This paper suggests developing a theory for \emph{non-commutative  error correcting codes} --- for  universal codes or any other $\frak{G}$-codes, where $\frak{G}$ is a natural subclass of groups. Most of the classical results in error correcting codes, can be formulated as problems in this setting. For example, 
\begin{problem}
     As random codes of linear size are good in the classical setup; do
random linearly sized subsets of the Hadamard universal codes  --- which was described in Theorem \ref{thm:exp_set} --- resolve Theorem \ref{conj:main}? Moreover, is there any linearly sized subset of the universal Hadamard code that induces a good universal code?
\end{problem}
\begin{problem}
     Various tradeoff results are known between the rate and distance of classical codes. What are the appropriate tradeoffs in the non-commutative case?  Specifically, are there new obstructions that arise in the universal codes case and not in the classical setup?
\end{problem}
\begin{problem}
    Are there good universal codes with words of linear length (at this point, Theorem \ref{thm:Spielman_construction} provides codes with words of polynomial length)?
\end{problem}
In response to our results, the following problem was raised independently by Nir Lazarovich and Zlil Sela:
\begin{problem}
    Does the following sampling scheme provides a good universal code with positive probability?
\begin{itemize}
    \item Fix a basis $B$ of $\cF_k$.
    \item Sample $C$ length $r$ walks  in the Cayley graph of the automorphism group $Aut(\cF_k)$ with respect to the product replacement algorithm generators (see \cite{lubotzky2001product}). Let their endpoints be $\varphi_1,...,\varphi_C\in Aut(\cF_k)$, and let $\varphi_0=\Id$. 
    \item Let $A=\bigcup_{i=0}^C\varphi_i(B)$. Then $A$ is of size $(C+1)|B|=(C+1)k$. Do we expect it to have a constant detection probability for some large enough $r=r(k)$? Say $r={\rm polylog}(k)$?
\end{itemize}
\end{problem}

\begin{problem}\label{prob:lifting_codes}
    Lift other  classical families of codes   to  universal codes.
\end{problem}
Specifically, with respect to Problem \ref{prob:lifting_codes},
in an early stage of this project we tried to provide a non-commutative version of the Reed--Muller (low individual degree polynomials) codes. We noticed that one can use \emph{mutual half basees} (which will be defined in a moment) to mimic polynomial interpolation on axis parallel lines. This led to the following problem which seems to be of independent interest.
\begin{defn}
    Let $k$ be a positive integer. A collection of \emph{mutual half bases for $\cF_{2k}$} is a set $\{B_0,...,B_q\}$ such that:
\begin{itemize}
    \item $|B_i|=k$ for every $0\leq i\leq q$.
    \item $B_i\cup B_j$ is a basis of $\cF_{2k}$ for every $0\leq i\neq j\leq q$.
\end{itemize}
\end{defn}

\begin{problem}
    What is the maximal number of mutual half bases in $\cF_{2k}$? Namely, how big can $q$ be in the above definition?

    In a similar fashion to that of Section \ref{sec:proper_subgps_and_ECCs}, this property is preserved by quotients, and by taking $\Phi_2$ the abelianization $\pmod 2$ we can bound $q$ by $2^k$. 
\end{problem}

\appendix
\section{Existence of lossless expanders}\label{sec:existence_lossless}

\begin{proof}[Proof of Lemma \ref{lem:unique_neighbors1}]
    Let $\Gamma=(L,R,E)$ be a uniformly random left $d$-regular graph with $|L|=n$ and $|R|=\beta n$. Let $S\subseteq L$ be a subset on the left where $|S|\leq \alpha n$ for some fixed $\alpha>0$, and $T\subseteq R$ be a subset on the right such that $|T|< (1-\eps)d|S|$. Let $X_{S,T}$
 be the indicator of whether the neigborhood of $S$ is contained in $T$, namely $N(S)=\{v\in R\mid \exists w\in S\colon vw\in E\}\subseteq T$. The probability $X_{S,T}=1$ is $\left(\frac{|T|}{|R|}\right)^{d|S|}$, which is bounded by $\left(\frac{(1-\eps)d|S|}{\beta n}\right)^{d|S|}$. So, by a union bound,
 \begin{equation}\label{eq:existence_of_expanders}
     \begin{split}
        \Pro\left[\sum X_{S,T}\geq 1\right]&\leq \sum_{S,T}\Pro[X_{S,T}=1]\\
        &\leq  \sum_{s=1}^{\alpha n}\sum_{|S|=s}\sum_{|T|=(1-\eps)ds} \left(\frac{(1-\eps)ds}{\beta n}\right)^{ds}\\
        &= \sum_{s=1}^{\alpha n}\binom{n}{s}\binom{\beta n}{(1-\eps)ds}\left(\frac{(1-\eps)ds}{\beta n}\right)^{ds}\\
        &\leq \sum_{s=1}^{\alpha n}\left(\frac{ne}{s}\right)^s \left(\frac{\beta ne}{(1-\eps)ds}\right)^{(1-\eps)ds}\left(\frac{(1-\eps)ds}{\beta n}\right)^{ds}\\
        &= \sum_{s=1}^{\alpha n} \left(e^{1+(1-\eps)d}\cdot n^{1-\eps d}\cdot s^{-1+\eps d} \cdot \beta^{-\eps d}\cdot ((1-\eps)d)^d\right)^s.
   \end{split}
 \end{equation}
    Now, as $\nicefrac{s}{n}\leq \alpha$ for every $s$ in the sum,  we can choose $d$ to be larger than  $e,2\eps^{-1}$ and $\beta^{-1}$, and every summand in \eqref{eq:existence_of_expanders} is bounded from above by $\left(\alpha^{\nicefrac{\eps}{2}}d^{3}\right)^{sd}$.
    Hence, by choosing $\alpha$ which is smaller than ${d}^{-\nicefrac{8}{\eps}}$, each summand is bounded by ${d^{-sd}}$ which translates to $\Pro\left[\sum X_{S,T}\geq 1\right]$ being bounded by $\frac{1}{d^d-1}<1$ and the existence of graphs for which all $X_{S,T}$ are zero. Hence, under the following choice of parameters
    \begin{equation}
d\geq e,2\eps^{-1},\beta^{-1}\quad,\quad \alpha\leq   d^{\nicefrac{-8}{\eps}}\quad,\quad n_0\geq \alpha^{-1}\ ,
    \end{equation}
    there are $(d,\beta,\alpha,1-2\eps)$ unique neighbor expanders for every $n\geq n_0$ such that $\beta n$ is an integer.
\end{proof}

\bibliographystyle{plain}
\bibliography{Bibliography}

\end{document}